\title{Half-turn linked pairs of isometries of hyperbolic $4$-space}
\author{Andrew E. Silverio}
\address{Department of Mathematical Sciences, University of Cincinnati, 45221}
\email{andrew.silverio@uc.edu}
\newtheorem{theorem}{Theorem}[section]
\newtheorem{corollary}[theorem]{Corollary}
\newtheorem{definition}[theorem]{Definition}
\newtheorem{lemma}[theorem]{Lemma}
\numberwithin{equation}{section}
\begin{document}
\begin{abstract}
 In this paper we develop a complete theory of factorization for isometries of hyperbolic 4-space. Of special interest is the case where a pair of isometries is linked, that is, when a pair of isometries can be expressed each as compositions of two involutions, one of which is common to both isometries.
\par  Here we develop a new theory of hyperbolic pencils and twisting planes involving a new geometric construction, their half-turn banks. This enables us to give complete results about each of the pair-types of isometries and their simultaneous factorization by half-turns. That is, we provide geometric conditions for each such pair to be linked by half-turns. The main result gives a necessary and sufficient condition for any given pair of isometries to be linked. We also provide a procedure for constructing a half-turn linked pair of isometries of $\mathbb{H}^4$ that do not restrict to lower dimensions, yielding an example that gives a negative answer to a question raised by Ara Basmajian and Karan Puri.
\end{abstract}


\maketitle
\section{Introduction}
A pair of isometries $A,B$ of the hyperbolic space $\mathbb{H}^n$ is said to be linked if there are involutions $\alpha$, $\beta$ and $\gamma$ such that $A=\alpha\beta$ and $B=\beta\gamma$. If furthermore $\alpha$, $\beta$ and $\gamma$ have $(n-2)$-dimensional fixed point set, then the pair is said to be linked by  half-turns. In dimensions $2$ and $3$, every pair of isometries is linked. Factoring by half-turns is used to determine the discreteness of the group $\langle A,B\rangle$ using the Gilman-Maskit algorithm in dimension $2$, the non-separating-disjoint-circles condition and the compact-core-geodesic-intersection condition in dimension $3$ \cites{algorithm,criteria,discon}. Unfortunately in dimension $4$, not all pairs are linked. Hence we develop a complete theory of half-turn factorization of orientation preserving isometries of $\mathbb{H}^4$.
\par To each plane $P$ in $\mathbb{H}^4$ corresponds a unique orientation preserving involution $H_P$ called the half-turn about $P$ whose fixed point set is $P$. A space of planes can be associated to each isometry that is constructed from a new concept of hyperbolic pencils. This space is called the half-turn bank of an isometry, and it locates all the planes in which half-turn factoring is possible.
\begin{theorem} \label{theorem:onepointone}
A pair of orientation preserving isometries of $\mathbb{H}^4$ is linked by half-turns if and only if they have a common element in their half-turn banks.
\end{theorem}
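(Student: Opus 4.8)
The plan is to prove both implications by unwinding the definitions of \emph{linked by half-turns} and of the \emph{half-turn bank}, once one establishes that membership in a bank is independent of the side on which the factor appears. Writing the linking relations as $A = H_P H_Q$ and $B = H_Q H_R$, where $\alpha = H_P$, $\beta = H_Q$, $\gamma = H_R$, the essential observation is that the common involution $\beta$ corresponds to a single plane $Q$, and this $Q$ is exactly the sought common element of the two banks; the whole content of the theorem is that this correspondence runs in both directions.

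First I would record the equivariance identity $g\,H_P\,g^{-1} = H_{g(P)}$, valid for every orientation preserving isometry $g$ and every plane $P$, since conjugating the half-turn about $P$ by an isometry produces the orientation preserving involution whose fixed set is the plane $g(P)$. Specializing to $g = H_P$ gives $H_P H_Q H_P = H_{H_P(Q)}$, and from this I extract a side-interchange lemma: $A = H_P H_Q$ if and only if $A = H_{H_P(Q)} H_P$, and likewise $A = H_S H_P$ if and only if $A = H_P H_{H_P(S)}$. Consequently a plane occurs as a left factor of $A$ precisely when it occurs as a right factor, so that even though the pencil construction may a priori privilege one side, the half-turn bank of $A$ records exactly those planes $P$ for which some half-turn factorization of $A$ uses $H_P$ as a factor, on either side.

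With this in hand the two directions are short. For the forward direction, from $A = H_P H_Q$ the plane $Q$ is a right factor of $A$, hence $Q$ lies in the bank of $A$, while from $B = H_Q H_R$ the plane $Q$ is a left factor of $B$, hence $Q$ lies in the bank of $B$; so $Q$ is a common element. For the converse, suppose $Q$ lies in both banks. Side-independence lets me realize $Q$ as a right factor of $A$, say $A = H_P H_Q$, and as a left factor of $B$, say $B = H_Q H_R$; then $\alpha = H_P$, $\beta = H_Q$, $\gamma = H_R$ exhibit $A$ and $B$ as linked by half-turns with $\beta$ the common involution. I would note in passing that once the common plane $Q$ is fixed the outer involutions are forced, since $H_P = A H_Q$ and $H_R = H_Q B$.

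The one place needing genuine care — the main obstacle — is the side-interchange step: I must verify that conjugating a half-turn by a half-turn again yields a half-turn, that is, an orientation preserving involution whose fixed set is a genuine $2$-plane rather than a point or lower-dimensional set, and that $H_P$ carries planes to planes. This is what makes each bank two-sided and thereby allows the single common plane $Q$ to serve simultaneously as a right factor of $A$ and a left factor of $B$. Everything else is a direct translation between ``common middle involution $\beta$'' and ``common plane in the two banks,'' so the real substance lies not in this theorem but in the earlier pencil and twisting-plane description of which planes actually populate each bank.
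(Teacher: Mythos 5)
There is a genuine gap, and it sits exactly where you declare victory. The half-turn bank $\mathcal{K}_A$ is not defined in the paper as ``the set of planes occurring as a factor in some half-turn factorization of $A$''; it is defined concretely, type by type, as a set of circles $s\cap t$ with $s$ and $t$ drawn from the permuted, invariant, or twisting pencils of $A$. Your argument pivots on the sentence asserting that the bank ``records exactly those planes $P$ for which some half-turn factorization of $A$ uses $H_P$ as a factor, on either side,'' and you present this as a consequence of the side-interchange lemma. It is not. The side-interchange identity $H_P H_Q = H_{H_P(Q)} H_P$ only shows that the set of left-factor planes equals the set of right-factor planes; it says nothing about how either set relates to the pencil-defined bank. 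The two containments you actually need are (i) every plane appearing in a half-turn factorization of $A$ lies in $\mathcal{K}_A$, and (ii) every element of $\mathcal{K}_A$ appears in a half-turn factorization of $A$ (on whichever side you like, and in particular $A$ admits such a factorization at all). These are Theorem \ref{theorem:exhaustive} and Theorem \ref{theorem:revisedkaran} respectively, and each is a nontrivial case analysis over the isometry types (using the combination lemmas of Section \ref{subsection:combinations} and the common-perpendicular construction). Your proposal neither proves nor cites them; it substitutes the easy two-sidedness observation for the hard equivalence.

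Once those two theorems are in hand, your forward and backward arguments are essentially the paper's proof of Corollary \ref{corollary:punchline}, and your conjugation trick is a clean alternative to the paper's way of getting two-sidedness (the paper instead builds both $\gamma = H_{k_1}H_k$ and $\gamma = H_k H_{k_2}$ directly from the pencils). But you have mislocated the main obstacle: it is not whether $H_P$ conjugates half-turns to half-turns, which is routine, but whether the constructively defined bank coincides with the set of half-turn factor planes. To repair the proof, state and invoke (or prove) both containments explicitly.
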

Note that the half-turn bank of an isometry is a set of circles defined by its fixed points and invariants, not by its factorization.  It is the introduction of the concept of a half-turn bank and the analysis of the half-turn bank for each type of isometry and the fact that the half-turn banks are constructible that make the theorem useful.
\par We also provide geometric criteria for a plane to be an element of the half-turn bank of an isometry. These criteria allow geometric conditions for various combinations of pairs of isometries to be linked. They are listed in section \ref{section:conditions}.
\par In section \ref{subsection:lorentzpencils}, Theorem \ref{theorem:disprovearasorry} is proven to show that a pair of isometries can be linked even if the group it generates does not restrict to lower dimensions, giving a negative answer to the question raised by Basmajian/Puri. A specific example is constructed.

\section{Organization}
\par This paper is organized as follows. In Section \ref{section:typesofisometry}, the classification of types of isometries in $\mathbb{H}^4$ are listed and defined in terms of reflection and other factoring. The half-turn is also defined. Some types are collected into so called \textit{atomic} types to consolidate a few definitions, while are others are broken further into subtypes. In Section \ref{section:pencils}, the new definitions of permuted, invariant and twisting (hyperbolic) pencils are defined for each type and subtype of isometry. The properties of hyperbolic pencils relevant to the main results are also listed. In Section \ref{section:halfturnbanks}, the half-turn bank of each type of isometry is introduced and defined. Theorem \ref{theorem:revisedkaran} is proved yielding the fact that the order of the linked pairs is negligible. Geometric criteria for a plane to be in a half-turn bank are listed and proven in Section \ref{subsection:geometriccriteria}. In Section \ref{section:justify}, planes are combined to show the result of composing two half-turns. The main theorem is restated and proven in Corollary \ref{corollary:punchline}.
\par In Section \ref{section:conditions}, some conditions for a pair to be linked are listed. The pairs of isometries are arranged according to the type or class.
\par In Section \ref{section:disproveara}, the hyperboloid model of hyperbolic space is used to find invariant subspaces of an isometry which relate to its hyperbolic pencils. The invariant spaces help in constructing a linked pair that does not restrict to lower dimensions, implying Theorem \ref{theorem:disprovearasorry}.

\section{Types of isometries in $\mathbb{H}^4$} \label{section:typesofisometry}

In general dimensions, the isometries of $\mathbb{H}^n$ classify into elliptic, hyperbolic and parabolic. Any isometry can be expressed as a composition of at most $n+1$ reflections. In dimension $4$, the classification of nontrivial isometries can subdivided into six types: type-I elliptic, type-II elliptic; pure hyperbolic; pure loxodromic; pure parabolic; and screw parabolic (See \cites{karan,ratcliffe}).
\par Pure parabolic and pure hyperbolic isometries are translations generalized from dimension $2$. A type-I elliptic isometry has fixed-point set that form a plane. Together, type-I elliptic, pure parabolic and pure hyperbolic are called \emph{atomic} isometries since they can be expressed as compositions of two reflections. A screw parabolic or pure loxodromic isometry can be decomposed uniquely into two commuting atomic isometries called its translation part and rotational part. The fixed-point set of its rotational part is called its twisting plane. A type-II elliptic isometry $\rho$ is conjugate to a matrix with two diagonal blocks of $2\times 2$ matrices in $\mathrm{SO}(2)$. Hence there is an unordered pair of angles $\alpha$ and $\beta$ associated with $\rho$. If $\alpha=\beta=\pi$, then $\rho$ is conjugate to the antipodal map and $\rho$ is an involution. Otherwise if both $\alpha$ and $\beta$ are not integer multiples of $\pi$, then there is a unique unordered pair of planes left invariant by isometry $\rho$.
\par Using the upper half-space model and restricting the action to the boundary, a parabolic isometry $\tau$ is conjugate to M\"obius transformation of the form $x\mapsto Ax+b$ where $b\neq 0$, $A \in \mathrm{SO}(3)$ and $Ab=b$ (see \cites{ratcliffe}). Assume for the moment that $\tau(x)=Ax+b$. Then $b$ is called the \emph{direction of $\tau$}. The Euclidean line connecting $0$ to $b$ in $\widehat{\mathbb{R}^3}$ is left invariant by $\tau$ and so is the plane $P_\tau$ in $\mathbb{H}^4$ bounded by this Euclidean line. Note that if $A$ is nontrivial, then $\tau$ is screw parabolic and $P_\tau$ is its twisting plane. The map $x\mapsto Ax$ is the rotational part of $\tau$ while $x\mapsto x +b$ is the translation part.
\par The following are definitions of the six types of isometries of $\mathbb{H}^4$ and half-turn, in terms of the number of reflection factors or atomic isometry factors (See \cites{karan,mythesis}). The advantage of these definitions is to locate the half-turn and reflection factors.

\begin{definition}
\par A \emph{type-I elliptic isometry of $\mathbb{H}^4$} is a composition of two reflections across distinct hyperplanes that intersect in a plane.

\par Let $\rho$ be a type-I elliptic isometry of $\mathbb{H}^4$. The \emph{twisting plane of $\rho$} is the set of fixed points of $\rho$ in $\mathbb{H}^4$.

\par Let $P$ be a plane in $\mathbb{H}^4$. The \emph{half-turn $H_P$ about $P$} is the elliptic isometry $H_P$ that is a composition of reflections across an orthogonal pair of hyperplanes that intersect in $P$.

\par An elliptic isometry $\rho$ of $\mathbb{H}^4$ is called \emph{of type-II} if $\rho$ fixes a unique point in $\mathbb{H}^4$.

\par A pure \emph{hyperbolic isometry} $\delta$ of $\mathbb{H}^4$ is a composition of exactly two reflections across ultra-parallel hyperplanes.

\par A \emph{pure loxodromic isometry $\gamma$} is a hyperbolic isometry of $\mathbb{H}^4$ that can be expressed as $\delta\rho$ where $\delta$ is a pure hyperbolic isometry leaving the axis of $\gamma$ invariant and $\rho$ is a type-I elliptic isometry whose twisting plane contains the axis of $\gamma$.

\par A \emph{pure parabolic isometry} $\tau$ of $\mathbb{H}^4$ is a composition of exactly two reflections across hyperplanes that are tangent to a unique point at infinity.

\par A \emph{screw parabolic isometry $\gamma$} is a parabolic isometry of $\mathbb{H}^4$ that can be expressed as $\tau\rho$ where $\tau$ is a pure parabolic isometry fixing the fixed point of $\gamma$, and $\rho$ is a type-I elliptic isometry whose twisting plane is left invariant by $\tau$.
\end{definition}

\par The $\rho$ factor in the definitions of either pure loxodromic or screw parabolic isometries is called its \emph{rotational part} while the other factor ($\delta$ or $\tau$) is called its \emph{translation part}. These factors commute and are unique. The fixed-point set of the rotational part of $\gamma$ is left invariant by $\gamma$ and is called the \emph{twisting plane} of $\gamma$.
\par An orientation preserving type-II elliptic isometry can be further classified into either an involution (antipodal map) or having a unique pair of invariant planes that intersect orthogonally to a single point. The latter subtype has also a pair of associated angles to the invariant planes; one such angle is not an integer multiple of $\pi$.

\section{Hyperbolic Pencils} \label{section:pencils}

\par The concept of pencils can be found from complex analysis, inversive geometry and projective geometry. Pencils are sometimes called coaxial family/systems of circles. In this section, new constructions and type of pencils are introduced to aid the process of factoring isometries into half-turns. The permuted, invariant, dual and twisting pencils of an isometry for each type are defined in this section. All types except type-II elliptic have permuted and dual pencils. The atomic isometries have invariant pencils while those with rotational parts have twisting pencils.

\begin{definition}
Let $\rho$ be a type-I elliptic isometry of $\mathbb{H}^4$ with a twisting plane $P$. The (elliptic) \emph{permuted pencil of $\rho$} is the set
\[\mathcal{F}_\rho = \left \{ \partial h \subset \partial \mathbb{H}^4 : h \text{ is a hyperplane containing } P \right\}.\]
The (elliptic) \emph{invariant pencil of $\rho$} is the set
\[\mathcal{T}_\rho = \left \{ \partial h \subset \partial \mathbb{H}^4 : h \text{ is a hyperplane orthogonal to } P \right\}.\]

The \emph{dual pencil of $\rho$} is the set
\[ \begin{split}
\mathcal{D}_\rho & = \left \{ \partial Q \subset \partial \mathbb{H}^4 : Q \text{ is a plane orthogonally intersecting} \right. \\
\nonumber & \quad \left. P \text{ in a unique point} \right\}.
\end{split}\]

\end{definition}

\begin{definition}
Let $\gamma$ be a hyperbolic isometry of $\mathbb{H}^4$ with axis $L$. The \emph{permuted pencil $\mathcal{F}_\gamma$ of $\gamma$} is the set
\[ \begin{split}
\mathcal{F}_\gamma &= \left \{ \partial h_x \subset \partial \mathbb{H}^4 :\, x \in L \text{ and } h_x \text{ is the orthogonal} \right. \\
	& \quad \left. \text{ complement of } L \text { through } x\right\}.
\end{split}\]
The \emph{invariant pencil of $\gamma $} is the set
\[\mathcal{T}_{\gamma}= \left \{ \partial t \subset \partial \mathbb{H}^4 : t \text{ is a hyperplane containing } L\right\}.\]
The \emph{dual pencil of $\gamma $} or \emph{pencil dual to $\mathcal{F}_\gamma$} is the set
\[\mathcal{D}_\gamma =\left\{ \partial P \subset \partial \mathbb{H}^4 : P \text{ is a plane containing } L \right\}.\]
\end{definition}

\par It is possible to define the permuted, dual, and invariant pencils of any parabolic isometry that fixes any point other than $\infty$ (see \cites{mythesis}). For easy reading, we define the pencils by conjugating its fixed point to $\infty$.
\par Let $\tau$ be the map sending $x$ to $Ax+b$ in $\widehat{\mathbb{R}^3}=\partial \mathbb{H}^4$ where $A\in \mathrm{SO}(3)$, $Ab=b$ and $b\neq 0$.The Euclidean line connecting $0$ to $b$ is left invariant by $\tau$. Moreover if $A$ is trivial, then any other Euclidean line parallel to $\mathrm{span}\{b\}$ is also left invariant by $\tau$, and so are the Euclidean planes parallel to $b$.

\begin{definition}
Let $\tau$ be the map of $\widehat{\mathbb{R}^3}$ that sends $x$ to $Ax+b$ in $\widehat{\mathbb{R}^3}$ where $A\in \mathrm{SO}(3)$, $Ab=b$ and $b\neq 0$. The \emph{permuted pencil of $\tau$} is the set
\[\mathcal{F}_\tau = \{(\text{Euclidean planes orthogonal to $b$})\cup \{\infty\} \}.\]
The \emph{invariant pencil of $\tau$} is the set
\[\mathcal{T}_\tau = \{(\text{Euclidean planes parallel to $b$})\cup \{\infty\} \}\]
The \emph{dual pencil of $\tau$} is the set
\[\mathcal{D}_\tau = \{(\text{Euclidean lines parallel to $b$})\cup \{\infty\} \}\]
\end{definition}

\begin{definition}
Let $\gamma$ be either pure loxodromic or screw parabolic isometry of $\mathbb{H}^4$ with rotational part $\rho$. The \emph{twisting pencil of $\gamma$} is defined to be the permuted pencil of $\rho$. The twisting pencil of $\gamma$ is denoted $ \mathcal{R}_\gamma $.
\end{definition}

\subsection{Visualizing pencils}
While all the pencils can be defined without assuming special fixed points, it may be difficult to imagine. Hence, we give examples of how they can possibly look. The formulas here are restrictions to the boundary model $\widehat{\mathbb{R}^3}$. By Poincar\'e extension, they extend uniquely to isometries of $\mathbb{H}^4$.

\par Let $\delta$ be the map $x\mapsto \lambda x$ where $\lambda >0$. Then $\delta$ fixes $0$ and $\infty$. Its pencils are as follows.
\begin{align*}
\mathcal{F}_\delta &= \{\text{Euclidean spheres centered at the origin}\}\\
\mathcal{T}_\delta &= \{(\text{Euclidean planes through the origin}) \cup \{\infty\}\}\\
\mathcal{D}_\delta &= \{(\text{Euclidean lines through the origin}) \cup \{\infty\}\}
\end{align*}
\par Let $\rho$ be the elliptic isometry that fixes the $z$-axis of $\mathbb{R}^3 \subset \widehat{\mathbb{R}^3}$ point\-wise and rotating by angle $\pi/4$. Its pencils are as follows.
\begin{align*}
\mathcal{F}_\rho &= \{(\text{Euclidean planes containing the $z$-axis})\cup \{\infty\}\}\\
\mathcal{T}_\rho &= \{(\text{Euclidean planes orthogonal to the $z$-axis})\cup \{\infty\}\} \cup\\
\nonumber & \quad \ \{\text{spheres centered at the $z$-axis}\}\\
\mathcal{D}_\rho &= \{\text{circles centered at the $z$-axis but contained in a plane in $\mathcal{T}_\rho$}\}
\end{align*}
The composition $\delta\rho=\rho\delta$ has $\rho$ as its rotational part and $\delta$ as its translation part. If $b=(0,0,1)$, then $ x \mapsto \rho(x)+b$ has $\rho$ as its rotational part while $x \mapsto x +b$ is its translation part.

\subsection{Properties of pencils} \label{subsection:propertiesofpencils}

For an atomic type of isometry, the permuted pencil serves as the source for reflection factoring. The rest of the nontrivial isometries need four factors of reflections. If $h$ is a sphere at infinity or a hyperplane inside $\mathbb{H}^4$, denote the reflection across $h$ with $R_h$. The following theorem is well-known in many books about hyperbolic geometry such as \cites{fenchel,maskit,karan,ratcliffe}.
\begin{theorem}
Let $\tau$ be an atomic isometry of $\mathbb{H}^4$. Then for each $h \in \mathcal{F}_\tau$ there exist $h_1,h_2 \in \mathcal{F}_\tau$ such that $\displaystyle \tau =R_{h_1}R_{h}=R_{h}R_{h_2}$.
\end{theorem}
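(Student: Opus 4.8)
The plan is to prove this factorization property by exploiting the structure of atomic isometries, which by the definitions given are precisely those isometries expressible as a composition of exactly two reflections: type-I elliptic, pure hyperbolic, and pure parabolic. For each of these, the permuted pencil $\mathcal{F}_\tau$ has been defined as the boundary traces of a specific family of hyperplanes (those containing the twisting plane $P$ for elliptic, the orthogonal complements $h_x$ along the axis for hyperbolic, or the Euclidean planes orthogonal to $b$ for parabolic). The key observation is that in every case this family is exactly the coaxial family of hyperplanes whose pairwise compositions of reflections generate the one-parameter group of translations/rotations containing $\tau$.

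First I would normalize $\tau$ by conjugation to a standard form, since the statement is conjugation-invariant: the three atomic cases reduce to $x \mapsto \lambda x$ (pure hyperbolic, $\lambda>0$), to a rotation fixing a plane pointwise (type-I elliptic), and to $x \mapsto x+b$ (pure parabolic). In each normalized case, the family $\mathcal{F}_\tau$ is a genuine pencil in the classical sense, parametrized by a single real parameter $t$, with the property that reflecting across the member at parameter $s$ followed by reflecting across the member at parameter $t$ yields the element of the one-parameter group moving a distance proportional to $t-s$. This is the standard fact that $\tau$ lies in a one-parameter subgroup and that $R_{h}R_{h'}$ depends only on the signed ``distance'' between $h$ and $h'$ within the pencil.

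Next, given an arbitrary $h \in \mathcal{F}_\tau$ at parameter $t_0$, I would simply solve for the members needed: choose $h_1$ to be the member at the parameter that makes $R_{h_1}R_h = \tau$, and choose $h_2$ to be the member making $R_hR_{h_2}=\tau$. Because the pencil is parametrized by all of $\mathbb{R}$ (the hyperplanes sweep out the whole family) and $\tau$ corresponds to a fixed translation amount in that parameter, both $h_1$ and $h_2$ exist uniquely and lie in $\mathcal{F}_\tau$; this is where I would invoke the cited well-known factorization of a one-parameter isometry as a product of two reflections from its coaxial family. The only subtlety is verifying that the three combinatorially distinct definitions of $\mathcal{F}_\tau$ really do describe coaxial families that foliate the relevant one-parameter group, so that the parameter ranges over all of $\mathbb{R}$ and no member is missing.

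The main obstacle I anticipate is the parabolic case, where the pencil members are Euclidean planes tangent at the single point at infinity rather than genuinely intersecting or ultra-parallel hyperplanes, so the ``distance'' parameter is an affine translation parameter rather than a hyperbolic length; I would need to check that $R_{h_1}R_h$ with $h_1, h$ parallel Euclidean planes orthogonal to $b$ produces exactly a Euclidean translation by $b$, and that the requisite parallel plane always exists in the family. Once the correct parametrization is fixed in each of the three cases and the solvability of the parameter equation is confirmed, the result follows immediately, and since these are exactly the three atomic types I would note that no further cases arise.
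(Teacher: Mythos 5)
Your argument is correct, but note that the paper does not actually prove this theorem: it simply cites it as well-known (Fenchel, Maskit, Puri, Ratcliffe), and the coaxial-pencil/one-parameter-subgroup argument you outline is precisely the standard proof in those references. The only slight imprecision is in the elliptic case, where the pencil of hyperplanes through the twisting plane is parametrized by an angle modulo $\pi$ rather than by all of $\mathbb{R}$; the parameter equation $2(\alpha_1-\alpha)=\theta$ is still solvable, so nothing breaks.
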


\par The elements of permuted pencils fill up the boundary while the hyperplanes that they bound fill up the hyperbolic $4$-space. If there is a point $x$ outside the axis, twisting plane or fixed points of $\gamma$, there is a unique element of $\mathcal{F}_\gamma$ that contains $x$.


\par The rest of the properties are listed as follows.

\par Let $\rho$ be an atomic isometry.
\begin{enumerate}
	\item If $t\in \mathcal{T}_\rho $, then $\rho(t)=t$ and $t$ is orthogonal to each $s\in \mathcal{F}_\rho$.
	\item If $t,u \in \mathcal{T}_\rho$ such that $u \cap t$ has more than one point but $u \neq t$, then $u \cap t \in \mathcal{D}_\rho$.
	\item If a sphere $s\subset \widehat{\mathbb{R}^3}$ contains some $c\in \mathcal{D}_\rho$, then $s\in \mathcal{T}_\rho$.
	\item For each $d\in \mathcal{D}_\rho$, there are $t,u \in \mathcal{T}_\rho$ such that $d = t\cap u$.
	\item For each pair $p,q \in \mathcal{D}_\rho$ with $p\neq q$, there is a unique $t\in \mathcal{T}_\rho$ containing $p\cup q$.
\end{enumerate}

Let $\gamma$ be either a pure loxodromic or a screw parabolic isometry of $\mathbb{H}^4$ with twisting plane $P$.
\begin{enumerate}
	\item If $t\in \mathcal{R}_\gamma $, then $\gamma(t)\in \mathcal{R}_\gamma$, $\gamma(t)\cap t = \partial P$ and $t$ is orthogonal to each $s\in \mathcal{F}_\gamma$.
	\item If $t,u \in \mathcal{R}_\gamma$ with $u \neq t$, then $u \cap t = \partial P$.
	\item If a sphere $s\subset \widehat{\mathbb{R}^3}$ contains $\partial P$, then $s\in \mathcal{R}_\gamma$.
	\item For each $d \in \mathcal{D}_\gamma\setminus \{ \partial P\}$, there is a unique $t\in \mathcal{R}_\gamma$ containing $d\cup (\partial P)$.
	\item If $\rho$ is the rotational part of $\gamma$, then $\mathcal{F}_\rho \subset \mathcal{T}_\gamma$.
\end{enumerate}

\section{Half-turn banks} \label{section:halfturnbanks}
\par In order to find half-turns or planes in which involution factoring is possible, a space of planes is collected similarly to the elements of the pencils. The space $\mathcal{K}_\gamma$, called the half-turn bank of $\gamma$, is a collection of circles derived from the pencils, but not prevalent enough to be called another pencil. The goal is to show that all half-turn factorizations of $\gamma$ come from $\mathcal{K}_\gamma$.

\par Whenever there is no confusion, we refer to the elements of $\mathcal{K}_\gamma$ as either planes in $\mathbb{H}^4$ or circles in $\partial \mathbb{H}^4$.

\begin{definition}
Let $\rho$ be an atomic isometry of $\mathbb{H}^4$. The \emph{half-turn bank of $\rho$} is the set
\[\mathcal{K}_\rho=\left\{ s\cap t \subset \partial \mathbb{H}^4 : s \in \mathcal{F}_\rho \text{ and } t \in \mathcal{T}_\rho \right\}.\]
The \emph{half-turn bank of the identity map of $\mathbb{H}^4$} is the set $\mathcal{K}_{\mathrm{id}}$ consisting of the circles that bound all planes in $\mathbb{H}^4$.
\end{definition}

\begin{definition}
Let $\gamma$ be either pure loxodromic or screw parabolic isometry of $\mathbb{H}^4$. The \emph{half-turn bank of $\gamma$} is the set \[ \mathcal{K}_\gamma =
 \left \{ s\cap t \subset \widehat{\mathbb{R}^3} : s \in \mathcal{F}_\gamma \text{ and } t \in \mathcal{R}_\gamma \right\}.\]
\end{definition}

\par As previously mentioned, an orientation preserving elliptic isometry $\rho$ is conjugate to an element of $\mathrm{SO}(4)$ that has two $2\times 2$ blocks of matrices in $\mathrm{SO}(2)$. The associated angles $\alpha$ and $\beta$ tell whether $\rho$ is type-I or type-II. The condition for type-II elliptic is that $\alpha$ and $\beta$ are nonzero. If exactly one of them is neither $\pi$ nor $0$, then $\rho$ can be factored uniquely into two commuting type-I elliptic isometries $\rho_1$ and $\rho_2$ whose twisting planes are orthogonally intersecting in a single point. Thus $\rho$ also leaves these two twisting planes invariant. If $\alpha=\beta=\pi$, then $\rho$ is conjugate to the antipodal map of the $3$-sphere and is also an involution.
\begin{definition} \label{definition:factorizationoftype2elliptic}
Let $\rho= \rho_1 \rho_2$ be an orientation preserving type-II elliptic isometry of $\mathbb{H}^4$ with an associated angle not an integer multiple of $\pi$ so that $\rho_1$ and $\rho_2$ are the unique type-I elliptic elements whose twisting planes intersect orthogonally in the fixed point. The \emph{half-turn bank of $\rho$} is the set
\[\mathcal{K}_\rho=\left\{ s\cap t \subset \partial \mathbb{H}^4 : s \in \mathcal{F}_{\rho_1} \text{ and } t \in \mathcal{F}_{\rho_2} \right\}.\]
\end{definition}

\begin{definition}
Let $\rho$ be an isometry of $\mathbb{H}^4$ that is conjugate to the antipodal map with fixed point $x$. The \emph{half-turn bank of $\rho$} is the set
\[\mathcal{K}_\rho=\left\{\partial P \subset \partial \mathbb{H}^4 : P \text{ is a plane containing } x\right\}.\]
\end{definition}

If $k$ is the boundary at infinity of a plane $P$, let $H_k$ be the half-turn about $P$ as well. The following shows that the half-turn bank is a source for half-turn factoring of an orientation preserving isometry.

\begin{theorem} \label{theorem:revisedkaran} 
Let $\gamma$ be an orientation preserving isometry of $\mathbb{H}^4$. Then for each $k\in \mathcal{K}_\gamma$, there exist $k_1,k_2\in \mathcal{K}_\gamma$ such that $\gamma = H_{k_1}H_{k}=H_{k}H_{k_2}$.
\end{theorem}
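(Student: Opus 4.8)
The plan is to prove the statement by reducing to the reflection–factoring theorem for atomic isometries and then treating each type of $\gamma$ separately, following the way its half-turn bank is defined. The unifying observation is that in every case an element $k\in\mathcal{K}_\gamma$ arises by definition as an intersection $k=s\cap t$ of two circles coming from two pencils whose bounding hyperplanes $h_s,h_t$ are mutually orthogonal; this orthogonality lets me write the half-turn as a product of two commuting reflections, $H_k=R_{h_s}R_{h_t}=R_{h_t}R_{h_s}$. The argument then amounts to factoring $\gamma$ into reflections drawn from the two pencils and recombining them, two at a time, into a new half-turn whose axis again lies in $\mathcal{K}_\gamma$.

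First I would treat the atomic case ($\rho$ type-I elliptic, pure hyperbolic, or pure parabolic), where $k=s\cap t$ with $s\in\mathcal{F}_\rho$, $t\in\mathcal{T}_\rho$. Property (1) of the pencils of an atomic isometry supplies both the orthogonality $h_s\perp h_t$ (so that $H_k=R_{h_s}R_{h_t}$) and the invariance $\rho(t)=t$ (so that $R_{h_t}$ commutes with $\rho$). The reflection theorem gives $s_1,s_2\in\mathcal{F}_\rho$ with $\rho=R_{h_{s_1}}R_{h_s}=R_{h_s}R_{h_{s_2}}$. Then $\gamma H_k=\rho R_{h_s}R_{h_t}=R_{h_{s_1}}R_{h_t}$; since $s_1\in\mathcal{F}_\rho$ and $t\in\mathcal{T}_\rho$ are again orthogonal, this product is the half-turn $H_{k_1}$ about $k_1=s_1\cap t\in\mathcal{K}_\rho$, giving $\gamma=H_{k_1}H_k$. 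Using the commutation $R_{h_t}\rho=\rho R_{h_t}$ together with the second factorization yields $H_k\gamma=R_{h_{s_2}}R_{h_t}=H_{k_2}$ with $k_2=s_2\cap t$, so $\gamma=H_kH_{k_2}$.

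Next I would handle the pure loxodromic and screw parabolic cases by the same mechanism, now writing $\gamma=\delta\rho=\rho\delta$ for the translation/rotational decomposition, with $s\in\mathcal{F}_\gamma$ and $t\in\mathcal{R}_\gamma=\mathcal{F}_\rho$. Here three commutation relations drive the computation: $\delta$ and $\rho$ commute; $\rho$ fixes every $h_s$ (it fixes $L\subset P$ pointwise, hence preserves the orthogonal complement $h_s$ of the axis), so $\rho R_{h_s}=R_{h_s}\rho$; and $\delta$ fixes every $h_t$ (since $t\in\mathcal{T}_\gamma$ by property (5), the hyperplane $h_t$ contains the axis $L$), so $\delta R_{h_t}=R_{h_t}\delta$. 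Factoring $\delta$ through $\mathcal{F}_\gamma$ and $\rho$ through $\mathcal{R}_\gamma$ and reordering gives $\gamma H_k=(\delta R_{h_s})(\rho R_{h_t})=R_{h_{s_1}}R_{h_{t_1}}=H_{k_1}$, and symmetrically $H_k\gamma=H_{k_2}$. The type-II elliptic non-involution case $\rho=\rho_1\rho_2$ is formally identical, with $\rho_1,\rho_2$ playing the roles of $\delta,\rho$: one checks that $\rho_2$ fixes each $h_s$ with $s\in\mathcal{F}_{\rho_1}$ and $\rho_1$ fixes each $h_t$ with $t\in\mathcal{F}_{\rho_2}$ (the extra direction cutting $h_s$ out of $P_1$ lies in the fixed plane $P_2$ of $\rho_2$, and vice versa), and that $h_s\perp h_t$ because their normals lie in the orthogonal planes $P_2$ and $P_1$. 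Finally the antipodal case is immediate: for $k=\partial P$ the factorization $-I=H_PH_{P^\perp}$ of the antipodal map by the orthogonal plane $P^\perp$ through the fixed point shows $\rho=H_{k_1}H_k=H_kH_{k_1}$ with $k_1=\partial(P^\perp)\in\mathcal{K}_\rho$, while the identity is handled trivially by $\mathrm{id}=H_kH_k$.

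The main obstacle I expect is not the algebra of recombination but the two geometric inputs that make it go through: verifying in each case that the two pencils furnishing $k=s\cap t$ have mutually orthogonal bounding hyperplanes, so that $H_k$ is genuinely a half-turn rather than a rotation through some other angle, and establishing the cross-commutation between one atomic factor and the reflections associated to the other pencil. For the atomic and hyperbolic cases these are precisely the listed pencil properties, but for the type-II elliptic case they require the explicit invariance and orthogonality checks sketched above; care is also needed to confirm that each recombined pair $R_{h_{s_i}}R_{h_{t_i}}$ again meets in a genuine circle of $\mathcal{K}_\gamma$ rather than degenerating to a point.
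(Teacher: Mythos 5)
Your proposal is correct and follows essentially the same route as the paper: in each case you factor the commuting atomic constituents of $\gamma$ into two reflections drawn from the relevant pencils, use orthogonality/invariance to commute the factors past one another, and recombine them into half-turns $H_{k_1}$, $H_{k_2}$ about circles that are again intersections of pencil elements. The only cosmetic difference is that you compute $\gamma H_k$ and $H_k\gamma$ directly and invoke commutation of an atomic factor with a reflection across an invariant hyperplane, where the paper inserts $R_tR_t$ and commutes pairs of reflections across orthogonal spheres; the underlying inputs are identical.
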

\begin{proof}
\par If $\gamma$ has a rotational part, let $\tau$ be the translation part of $\gamma$ and $\rho$ be the rotational part of $\gamma$.  If $ k \in\mathcal{K}_\gamma $, there are $s\in \mathcal{F}_\gamma$ and $t\in \mathcal{R}_\gamma$ such that $k=s\cap t$. Then there are $s_1,s_2 \in\mathcal{F}_\tau = \mathcal{F}_\gamma$ such that $\tau =R_{s_1}R_{s}=R_{s}R_{s_2}$ and there are $t_1,t_2 \in \mathcal{F}_\rho =\mathcal{R}_\gamma$ such that $\rho =R_{t_1}R_{t}=R_{t}R_{t_2}$. Each sphere in $\{s, s_1, s_2\}$ is orthogonal to each sphere in $\{t, t_1, t_2\}$. The following relations hold.
\begin{align*}
R_{s}R_{t_{1}}&= R_{t_{1}}R_{s} & H_k &= R_{s}R_{t}\\
R_{t_{2}}R_{s}&= R_{s}R_{t_2} & H_k &= R_{t}R_{s}
\end{align*}
 Let $k_1=s_1\cap t_1$ and $k_2=s_2\cap t_2$. Then $H_{k_1}= R_{s_1}R_{t_1}=R_{t_1}R_{s_1}$ and $H_{k_2}=R_{s_2}R_{t_2}=R_{t_2}R_{s_2}$. Since $\gamma=\tau\rho=\rho\tau$,
\begin{align*}
\gamma &= \tau \rho & \gamma&=\rho \gamma\\
\nonumber & = R_{s_1}R_{s}R_{t_1}R_{t} & \nonumber & = R_{t}R_{t_2}R_{s}R_{s_2} \\
\nonumber & = R_{s_1}R_{t_1}R_{s}R_{t} & \nonumber & = R_{t}R_{s}R_{t_2}R_{s_2} \\
\nonumber & = H_{k_1}H_{k} & \nonumber & = H_{k}H_{k_2}
\end{align*}

\par If $\gamma$ is atomic and $k\in \mathcal{K}_\gamma$, there are $s\in \mathcal{F}_\gamma$ and $t\in \mathcal{T}_\gamma$ such that $k=s\cap t$. Then there are $s_1,s_2 \in \mathcal{F}_\gamma$ such that $\gamma =R_{s_1}R_{s}=R_{s}R_{s_2}$. Let $k_1=s_1\cap t$ and $k_2=s_2\cap t$. Since $s$, $s_1$ and $s_2$ are orthogonal to $t$, $H_{k_1}= R_{s_1}R_{t}=R_{t}R_{s_1}$ and $H_{k_2}=R_{s_2}R_{t}=R_{t}R_{s_2}$. Then,
\begin{align*}
\delta &= R_{s_1}R_{s} & \delta &= R_{s}R_{s_2}\\
\nonumber &= R_{s_1}R_{t} R_{t}R_{s} & \nonumber &= R_{s}R_{t} R_{t}R_{s_2}\\
\nonumber &= H_{k_1}H_{k} & \nonumber &= H_{k}H_{k_2}.
\end{align*}
\par If $\gamma$ is a type-II elliptic involution, let $P$ be the plane
bounded by $k$. Then there is a unique plane $Q$ orthogonal to $P$
at the fixed point of $\gamma$. Let $k_1=\partial Q$. Then
$\gamma= H_k H_{k_1}=H_{k_1}H_k$ as shown on Corollary
\ref{corollary:type2involution}. Let $k_2=k_1$ so that
$\gamma=H_kH_{k_2}$.
\par If $\gamma$ is type-II elliptic but not an involution, let
$\gamma=\rho_1\rho_2$ be its factorization in Definition \ref{definition:factorizationoftype2elliptic}.
Then there are $s\in \mathcal{F}_{\rho_1}$ and $t\in \mathcal{F}_{\rho_2}$
such that $k = s \cap t$. There are also $s_1,s_2\in \mathcal{F}_{\rho_1}$ such that
$\rho_1=R_{s_1}R_s=R_sR_{s_2}$ and $t_1,t_2\in \mathcal{F}_{\rho_2}$ such that
$\rho_2=R_{t_1}R_t=R_tR_{t_2}$. Since $s,s_1,s_2 \in \mathcal{T}_{\rho_2}$
and $t,t_1,t_2 \in \mathcal{T}_{\rho_1}$, each sphere in $\{s,s_1,s_2\}$
is orthogonal to each sphere in $\{t,t_1,t_2\}$.
Let $k_1=s_1\cap t_1$ and $k_2=s_2\cap t_2$. Similar to the case where
$\gamma$ has a rotational part, we have $\gamma=H_{k_1}H_k=H_kH_{k_2}$.
\end{proof}

\subsection{Alternative geometric definitions} \label{subsection:geometriccriteria}
The elements of half-turn banks are defined as intersections of spheres. In lower dimensions, involution-\-factoring is constructed from the lines perpendicular to the axis of an isometry \cites{algorithm,maskit}. The following statements are the analogous geometric definitions.

\begin{theorem} \label{theorem:secretiffelliptic}
Let $\rho$ be a type-I elliptic isometry of $\mathbb{H}^4$ with twisting plane $P$. Then $Q$ is a plane orthogonal to $P$ through a line if and only if $\partial Q \in \mathcal{K}_\rho$.
\end{theorem}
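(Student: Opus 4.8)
The plan is to reduce membership in $\mathcal{K}_\rho$ to a statement about hyperplanes and then translate the orthogonality condition through tangent spaces along the common geodesic. First I would unwind the definitions. By construction $\partial Q \in \mathcal{K}_\rho$ means $\partial Q = s\cap t$ for some $s\in\mathcal{F}_\rho$ and $t\in\mathcal{T}_\rho$; write $s=\partial h_1$ with $h_1$ a hyperplane containing $P$ and $t=\partial h_2$ with $h_2$ a hyperplane orthogonal to $P$. Since the boundary of an intersection of totally geodesic subspaces is the intersection of their boundaries, $s\cap t=\partial h_1\cap\partial h_2=\partial(h_1\cap h_2)$, and a plane is determined by its boundary circle, so $\partial Q\in\mathcal{K}_\rho$ is equivalent to $Q=h_1\cap h_2$ for some hyperplane $h_1\supset P$ and some hyperplane $h_2\perp P$. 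Because $P\subset h_1$ while $h_2\perp P$, the two hyperplanes are distinct, so they meet in a genuine plane rather than coinciding.

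For the forward direction I would assume $Q=h_1\cap h_2$. Since $P\subset h_1$, I compute $Q\cap P=(h_1\cap h_2)\cap P=h_2\cap P$, and because $h_2$ is orthogonal to but does not contain $P$, this intersection is a single geodesic $\ell$. It then remains to check orthogonality of $Q$ and $P$ along $\ell$. I would fix $x\in\ell$ and choose an orthonormal frame $e_0,e_1,e_2,e_3$ of $T_x\mathbb{H}^4$ with $e_0$ tangent to $\ell$, $T_xP=\langle e_0,e_1\rangle$, and $T_xh_2=\langle e_0,e_2,e_3\rangle$ (this is exactly what $h_2\perp P$ along $\ell$ records). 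Then $T_x\ell\subset T_xQ\subset T_xh_2$, so the direction of $Q$ transverse to $\ell$ lies in $\langle e_2,e_3\rangle$, which is orthogonal to the transverse direction $e_1$ of $P$. Hence $Q\perp P$ along $\ell$.

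For the converse I would assume $Q$ meets $P$ orthogonally along a line $\ell=Q\cap P$. At $x\in\ell$ pick the frame so that $T_x\ell=\langle e_0\rangle$, $T_xP=\langle e_0,e_1\rangle$, and $T_xQ=\langle e_0,e_2\rangle$, where orthogonality gives $e_2\perp e_1$. Take $h_1$ to be the totally geodesic hyperplane through $x$ tangent to $\langle e_0,e_1,e_2\rangle$; it contains both $P$ and $Q$ since their tangent planes lie in this subspace, so $h_1\supset P$. Take $h_2$ to be the hyperplane through $x$ tangent to $\langle e_0,e_2,e_3\rangle$; then $Q\subset h_2$ by tangent containment of totally geodesic subspaces, and $h_2\cap P=\ell$ with the transverse direction $e_1$ of $P$ orthogonal to $T_xh_2$, so $h_2\perp P$. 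Finally $h_1\cap h_2$ is the totally geodesic subspace tangent to $\langle e_0,e_1,e_2\rangle\cap\langle e_0,e_2,e_3\rangle=\langle e_0,e_2\rangle=T_xQ$, which is precisely $Q$. Therefore $\partial Q=\partial h_1\cap\partial h_2\in\mathcal{K}_\rho$.

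The step I expect to be the main obstacle is justifying that the pointwise data chosen at $x$ really is the global condition, namely that the right angle between two totally geodesic subspaces meeting along a common geodesic is the same at every point of that geodesic, and that $h_1\cap h_2$ is genuinely $Q$ and not some larger subspace. I would handle this either by invoking the constancy of the angle between totally geodesic subspaces along their common geodesic, or, to be safe, by first conjugating $\rho$ into the standard position of the Visualizing Pencils subsection, so that $\partial P$ is the $z$-axis and $\mathcal{F}_\rho,\mathcal{T}_\rho$ are the explicit families listed there. Both $\mathcal{K}_\rho$ and the property ``orthogonal to $P$ through a line'' are preserved under conjugation, and in that model the intersections $s\cap t$ are visibly the circles meeting $\partial P$ orthogonally in two points, so the equivalence can be read off directly.
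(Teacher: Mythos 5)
Your proof is correct and follows essentially the same route as the paper: in both directions the key objects are the hyperplane spanned by $P\cup Q$ (the $\mathcal{F}_\rho$ element) and the hyperplane through $Q$ orthogonal to it (the $\mathcal{T}_\rho$ element), which is exactly the paper's construction. Your tangent-frame bookkeeping at a point of $\ell$ is just a more explicit way of verifying the same incidence and orthogonality claims.
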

\begin{proof}
If $Q$ intersects $P$ in a line $\ell$, $P\cup Q$ forms a unique hyperplane $h$. There is a hyperplane $t$ orthogonal to $h$ through $Q$. Then $P$ is orthogonal to $t$ through $\ell$ since $P\subset h$. It follows that $\partial t \in \mathcal{T}_\rho$ and $\partial h \in \mathcal{F}_\rho$. Since $Q=t\cap h$, $\partial Q \in \mathcal{K}\rho$.
\par Conversely if $\partial Q \in \mathcal{K}_\rho$, let $h \in \mathcal{F}_\rho$ and $t \in \mathcal{T}_\rho$ such that $\partial Q = h\cap t$. Let $\hat h$ and $\hat t$ be the hyperplanes bounded by $h$ and $t$, respectively. The hyperplane $\hat t$ and the plane $P$ are orthogonal through a line. Then $(\partial P) \cap t$ is a set of two points contained in $\partial Q$ since $\partial P \subset h$. Both $P$ and $Q$ are planes in the hyperplane bounded by $h$; they intersect in the line $P\cap \hat t$. Since $\hat t$ is orthogonal to $P$ and $Q \subset \hat h$, then $P$ and $Q$ are orthogonal through the line $P\cap \hat t$.
\end{proof}

\begin{theorem} \label{theorem:secretiffpurehyperbolic}
Let $\delta$ be a pure hyperbolic isometry of $\mathbb{H}^4$ with axis $L$. Then $P$ is a plane orthogonal to $L$ if and only if $\partial P\in \mathcal{K}_\delta$.
\end{theorem}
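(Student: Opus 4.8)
The plan is to mirror the structure of the proof of Theorem~\ref{theorem:secretiffelliptic}, exploiting that a pure hyperbolic isometry is atomic, so by definition $\mathcal{K}_\delta = \{s \cap t : s \in \mathcal{F}_\delta,\ t \in \mathcal{T}_\delta\}$. Here each $s \in \mathcal{F}_\delta$ is the boundary of a hyperplane $h_x$ that is the orthogonal complement of $L$ through some $x \in L$, and each $t \in \mathcal{T}_\delta$ is the boundary of a hyperplane $\hat t$ containing $L$. Throughout I would pass freely between a circle $\partial P$ at infinity and the plane $P$ it bounds, and between a sphere and the hyperplane it bounds, as the paper does.

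For the forward implication ($P \perp L \Rightarrow \partial P \in \mathcal{K}_\delta$), suppose $P$ is orthogonal to $L$, meeting it at a point $x$. Let $h_x$ be the orthogonal complement of $L$ through $x$. Since the tangent plane $T_x P$ is orthogonal to $T_x L$, it lies in $T_x h_x$, and as both $P$ and $h_x$ are totally geodesic through $x$ this forces $P \subset h_x$, so $\partial P \subset s := \partial h_x \in \mathcal{F}_\delta$. Next I would produce the invariant hyperplane: the geodesic $L$ and the plane $P$ span a unique hyperplane $\hat t$, the totally geodesic hull of $L \cup P$, whose tangent space at $x$ is the $3$-dimensional direct sum $T_x L \oplus T_x P$. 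This $\hat t$ contains $L$, so $t := \partial \hat t \in \mathcal{T}_\delta$. A tangent-space computation then gives $h_x \cap \hat t = P$, whence $\partial P = s \cap t \in \mathcal{K}_\delta$.

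For the reverse implication ($\partial P \in \mathcal{K}_\delta \Rightarrow P \perp L$), I would write $\partial P = s \cap t$ with $s = \partial h_x$ and $t = \partial \hat t$ as above, so that $P = h_x \cap \hat t$. Since $x \in L \subset \hat t$ and $x \in h_x$, the point $x$ lies on $P$; and because $L$ meets $h_x$ orthogonally only at $x$, the plane $P \subset h_x$ meets $L$ solely at $x$ and does so orthogonally. The one point to verify is that $h_x \cap \hat t$ is genuinely a $2$-plane: as $h_x$ is orthogonal to $L$ at $x$ while $\hat t$ contains $L$, the two hyperplanes are distinct and both pass through $x$, and a tangent-space count shows their intersection is exactly the orthogonal complement of $T_x L$ inside $T_x \hat t$, which is $2$-dimensional.

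The main obstacle I anticipate is the bookkeeping that guarantees the intersection of the two hyperplanes is exactly the plane $P$ rather than a submanifold of the wrong dimension. This is where I would lean on the fact that a totally geodesic submanifold of $\mathbb{H}^4$ is determined by a point it contains together with its tangent space there, so that the equality $h_x \cap \hat t = P$ reduces to the linear-algebra identity $(T_x L)^\perp \cap (T_x L \oplus T_x P) = T_x P$ at the common point $x$. Alternatively, one may normalise by an isometry so that $L$ is the vertical axis from $0$ to $\infty$, in which case $\mathcal{F}_\delta$ consists of Euclidean spheres centred at the origin and $\mathcal{T}_\delta$ of Euclidean planes through the origin; their intersections are precisely the circles centred at the origin lying in such planes, which are exactly the boundaries of planes orthogonal to $L$, giving a second, coordinate verification.
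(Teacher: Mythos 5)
Your proof is correct and follows essentially the same route as the paper's: both directions hinge on realizing $P$ as the intersection of the orthogonal-complement hyperplane of $L$ at the meeting point with the hyperplane spanned by $L\cup P$, the only difference being that you verify $h_x\cap\hat t=P$ by tangent-space linear algebra where the paper picks explicit perpendicular lines $v_P$, $w_P$, $y$ through the intersection point. The coordinate normalization you sketch at the end is a valid independent check but not needed.
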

\begin{proof}
Suppose $P$ is a plane orthogonal to $L$. Let $b$ be the intersection point of $P$ and $L$. There are orthogonal lines $v_P,w_P \subset P$ through $b$. Let $y$ be the unique line orthogonal to $v_P$, $w_P$ and $L$ through $b$. There is a unique hyperplane $h_b$ orthogonal to $L$ through $b$. Then $y \subset h_b$. Let $\hat t$ be the unique hyperplane spanned by $L$, $w_P$ and $v_P$. Then $h_b\cap \hat t =P$ since $w_P, v_P \subset h_b\cap \hat t$. Let $t=\partial \hat t$ and $s=\partial h_b$. Then $t\cap s =\partial P$. Since $h_b$ is orthogonal to $L$, $s$ is in $\mathcal{F}_\delta$. Similarly, the fact that $\hat t$ contains $L$ implies that $t\in \mathcal{T}_\delta$.
\par Conversely if $\partial P \in \mathcal{K}_\delta$, there are $h \in \mathcal{F}_\delta$ and $t \in \mathcal{T}_\delta$ such that $\partial P = h\cap t$. Let $\hat h$ and $\hat t$ be the hyperplanes bounded by $h$ and $t$ respectively. Then $\hat h$ is orthogonal to $L$ through a point $x$. Since $\hat t$ contains $L$ and $P$, $x\in L \cap P$. As a subset of $\hat h$ containing $x$, $P$ is orthogonal to $L$.
\end{proof}

\begin{theorem} \label{theorem:secretiffpureparabolic}
Let $\tau$ be a pure parabolic isometry of $\mathbb{H}^4$ with fixed point $v\in \widehat{\mathbb{R}^3}$. Then $c$ is a circle in $\widehat{\mathbb{R}^3}$ such that $v\in c \subset h$ for some $h \in \mathcal{F}_\tau$ if and only if $c \in \mathcal{K}_\tau$.
\end{theorem}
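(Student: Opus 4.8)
The plan is to normalize the fixed point. Conjugating by a Möbius transformation carries circles to circles and, as the remarks preceding the definition make explicit, this is precisely how the pencils $\mathcal{F}_\tau$ and $\mathcal{T}_\tau$ (and hence $\mathcal{K}_\tau$) are set up; so I may assume $v=\infty$ and $\tau(x)=x+b$ with $b\neq 0$. Under this normalization the objects become concrete: $\mathcal{F}_\tau$ is the family of Euclidean planes orthogonal to $b$ (each adjoined with $\infty$), $\mathcal{T}_\tau$ is the family of Euclidean planes parallel to $b$ (each adjoined with $\infty$), and a circle through $v=\infty$ is exactly a Euclidean line together with $\infty$. The whole statement then reduces to the assertion that $\mathcal{K}_\tau$ is precisely the set of Euclidean lines perpendicular to $b$ (each with $\infty$), which is the same as the set of circles $c$ with $v\in c\subset h$ for some $h\in\mathcal{F}_\tau$.

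For the forward implication I would start from a circle $c$ with $v\in c\subset h$ and $h\in\mathcal{F}_\tau$. Since $c$ passes through $\infty$ it is a Euclidean line, and since it lies in the plane $h$, which is orthogonal to $b$, its direction is perpendicular to $b$. I would then set $s=h\in\mathcal{F}_\tau$ and take $t$ to be the Euclidean plane spanned by $c$ and the direction $b$, adjoined with $\infty$. Because $t$ contains the direction $b$ it is parallel to $b$, so $t\in\mathcal{T}_\tau$; and since the finite parts of $s$ and $t$ both contain the line $c$ while meeting in a single line, that common line is $c$, whence $s\cap t=c$ once $\infty$ is accounted for. This exhibits $c=s\cap t\in\mathcal{K}_\tau$.

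For the converse I would begin with $c=s\cap t$, where $s\in\mathcal{F}_\tau$ and $t\in\mathcal{T}_\tau$. Both $s$ and $t$ contain $\infty=v$, so $v\in c$ immediately. The finite part of $s$ is a plane with normal $b$, whereas the finite part of $t$ is a plane whose normal is orthogonal to $b$; as $b\neq 0$ these normals are never parallel, so the two finite planes meet in a genuine Euclidean line, and adjoining $\infty$ shows $c$ is a circle through $v$. Taking $h=s\in\mathcal{F}_\tau$ then gives $v\in c\subset h$, as required.

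The only delicate point is the degeneracy check hidden in both directions: one must verify that an element of $\mathcal{F}_\tau$ and an element of $\mathcal{T}_\tau$ are never equal or parallel, so that their intersection is always a circle (a line plus $\infty$) rather than empty or all of $\widehat{\mathbb{R}^3}$. This rests entirely on the observation that the normal of $s$ is $b$ while the normal of $t$ is perpendicular to $b$, together with careful bookkeeping of the point $\infty$ that both planes share. I expect this non-parallelism check to be the main—though minor—obstacle, and otherwise the argument runs in close parallel to the proofs of Theorems \ref{theorem:secretiffelliptic} and \ref{theorem:secretiffpurehyperbolic}, with the Euclidean planes orthogonal to and parallel to $b$ playing the role of the hyperplanes used there.
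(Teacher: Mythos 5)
Your proposal is correct and follows essentially the same route as the paper: normalize $v=\infty$, take $t$ to be the Euclidean plane spanned by the line $c$ and the translation direction $b=\tau(0)$ (this is exactly the paper's set $\bigl\{x+\lambda\tau(0): x\in c\setminus\{\infty\}\bigr\}\cup\{\infty\}$), and for the converse observe that $h$ and $t$ both contain $v$ so $v\in c\subset h$. Your added non-degeneracy check that an element of $\mathcal{F}_\tau$ and one of $\mathcal{T}_\tau$ always meet in a genuine line is a small point the paper leaves implicit, but it does not change the argument.
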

\begin{proof}
Let $c$ be a circle in $\widehat{\mathbb{R}^3}$ such that $v\in c \subset h$ for some $h \in \mathcal{F}_\tau$. It suffices to show that $c\in \mathcal{K}_\tau$ in the case where $v=\infty$. Then $h$ is a Euclidean plane and $c$ is a Euclidean line. Let $t$ be the set $\bigl \{x+ \lambda \tau(0): x \in c \setminus \{\infty \}, \, \lambda \in \mathbb{R} \bigr\} \cup \{\infty \}$. Then $t$ is orthogonal to $h$ through $c$ so $t \in \mathcal{T}_\tau$ and $c=h \cap t$. Hence, $c\in \mathcal{K}_\tau$.
\par Conversely if $c \in \mathcal{K}_\tau$, there are $h \in \mathcal{F}_\tau$ and $t \in \mathcal{T}_\tau$ such that $c = h\cap t$. Then $c\subset h$. Both $h$ and $t$ contain $v$ so $v\in c $.
\end{proof}

\begin{theorem} \label{theorem:secretiffpureloxodromic}
Let $\gamma$ be a pure loxodromic isometry of $\mathbb{H}^4$ with axis $L$ and twisting plane $P$. Then $Q \subset \mathbb{H}^4$ is a plane orthogonal to both $P$ and $L$ through a line in $P$ if and only if $\partial Q \in \mathcal{K}_{\gamma}$.
\end{theorem}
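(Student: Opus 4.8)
The plan is to unwind the definitions so that membership in $\mathcal{K}_\gamma$ becomes a concrete statement about two hyperplanes, and then to prove the two implications by the same hyperplane construction used in Theorems~\ref{theorem:secretiffelliptic} and~\ref{theorem:secretiffpurehyperbolic}. First I would record that, since $\gamma$ and its translation part $\delta$ share the axis $L$, we have $\mathcal{F}_\gamma=\mathcal{F}_\delta$, and that by definition the twisting pencil $\mathcal{R}_\gamma$ equals the permuted pencil $\mathcal{F}_\rho$ of the rotational part $\rho$, whose twisting plane is $P\supset L$. Thus a typical element of $\mathcal{K}_\gamma$ is $\partial(\hat s\cap\hat t)$ where $\hat s$ is a hyperplane orthogonal to $L$ at some point $x\in L$ and $\hat t$ is a hyperplane containing $P$.

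For the ``only if'' direction I would begin with a plane $Q$ meeting $P$ in a line $\ell$, orthogonal to $P$ along $\ell$, and orthogonal to $L$; set $x=Q\cap L$, noting $x\in L\subset P$ forces $x\in Q\cap P=\ell$. I would build the two hyperplanes directly: let $\hat t$ be the $3$-dimensional totally geodesic span of $P\cup Q$, which is a hyperplane since $\dim(P+Q)=2+2-1=3$ and which contains $P$, so $\partial\hat t\in\mathcal{F}_\rho=\mathcal{R}_\gamma$; and let $\hat s$ be the hyperplane orthogonal to $L$ at $x$, so $\partial\hat s\in\mathcal{F}_\delta=\mathcal{F}_\gamma$. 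The crux is the identity $Q=\hat s\cap\hat t$: the inclusion $Q\subset\hat t$ holds by construction, while $Q\subset\hat s$ follows from $Q\perp L$, since then $T_xQ\subset(T_xL)^{\perp}=T_x\hat s$ and both are totally geodesic through $x$; as $\hat s\neq\hat t$ (one is orthogonal to $L$, the other contains $L$) the intersection $\hat s\cap\hat t$ is a plane, hence equals $Q$. Then $\partial Q=\partial\hat s\cap\partial\hat t\in\mathcal{K}_\gamma$.

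For the ``if'' direction I would take $\partial Q=s\cap t$ with $s=\partial\hat s\in\mathcal{F}_\delta$ ($\hat s$ orthogonal to $L$ at $x\in L$) and $t=\partial\hat t\in\mathcal{F}_\rho$ ($\hat t\supset P$), so that $Q=\hat s\cap\hat t$. Orthogonality to $L$ is immediate because $Q\subset\hat s$ and $\hat s$ meets $L$ orthogonally at $x\in L\subset P\subset\hat t$. Since $P\subset\hat t$, the intersection $Q\cap P$ equals $\hat s\cap P$, a line $\ell$ through $x$ cut out inside $P$ by $\hat s\perp L$. The orthogonality $Q\perp P$ along $\ell$ is the single computation I would carry out at the tangent level: choosing an orthonormal frame $e_1,e_2,e_3,e_4$ at $x$ with $T_xL=\langle e_1\rangle$ and $T_xP=\langle e_1,e_2\rangle$, one gets $T_x\hat s=\langle e_2,e_3,e_4\rangle$ and $T_x\hat t=\langle e_1,e_2,e_3\rangle$, hence $T_xQ=\langle e_2,e_3\rangle$; the direction of $P$ transverse to $\ell$ is $e_1$, that of $Q$ is $e_3$, and $e_1\perp e_3$, giving $Q\perp P$ through $\ell\subset P$.

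I expect the main obstacle to be orthogonality bookkeeping rather than any deep difficulty: the delicate points are that a hyperplane orthogonal to $L$ at a point of $L\subset P$ is automatically orthogonal to $P$ (that is, $\mathcal{F}_\delta\subset\mathcal{T}_\rho$), and that a hyperplane containing $P$ contains $L$ (so $\mathcal{F}_\rho\subset\mathcal{T}_\delta$). These two pencil inclusions yield $\mathcal{K}_\gamma\subset\mathcal{K}_\delta\cap\mathcal{K}_\rho$, which furnishes a quick alternative proof of the ``if'' direction: by Theorem~\ref{theorem:secretiffpurehyperbolic} membership in $\mathcal{K}_\delta$ gives $Q\perp L$, and by Theorem~\ref{theorem:secretiffelliptic} membership in $\mathcal{K}_\rho$ gives $Q$ orthogonal to $P$ through a line. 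I would include this as a cross-check that the frame computation above is consistent with the already established cases.
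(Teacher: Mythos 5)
Your proof is correct and follows essentially the same route as the paper's: both directions rest on exhibiting $Q$ as the intersection of the hyperplane orthogonal to $L$ at $x=Q\cap L$ (an element of $\mathcal{F}_\gamma$) with the hyperplane spanned by $P\cup Q$ (an element of $\mathcal{R}_\gamma$), the paper merely building these hyperplanes from explicitly constructed perpendicular lines $\ell,\ell',\ell_4$ where you use tangent-space bookkeeping. Your closing observation that $\mathcal{F}_\gamma\subset\mathcal{T}_\rho$ and $\mathcal{R}_\gamma\subset\mathcal{T}_\delta$ yield $\mathcal{K}_\gamma\subset\mathcal{K}_\delta\cap\mathcal{K}_\rho$, reducing the ``if'' direction to Theorems \ref{theorem:secretiffelliptic} and \ref{theorem:secretiffpurehyperbolic}, is a valid shortcut not taken in the paper.
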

\begin{proof}
Suppose $Q$ is a plane orthogonal to both $P$ and $L$ such that $Q\cap P$ is a line. Let $\ell = Q \cap P$. If $Q$ is orthogonal to $L$, then $\ell$ and $L$ are perpendicular lines in $P$. Let $x$ be the intersection point $Q$ and $L$. Let $\ell '$ be the unique line in $Q$ perpendicular to $\ell$ through $x$. Likewise $\ell '$ is perpendicular to $L$. There is a unique line $\ell_4$ perpendicular to all $L$, $\ell$ and $\ell '$. In particular, $\ell_4$ is the unique line orthogonal to the hyperplane spanned by $P$ and $Q$ passing through $x$. Let $h$ be the hyperplane spanned by $\ell$, $\ell '$ and $\ell_4$. Since $L$ is perpendicular to all $\ell$, $\ell '$ and $\ell_4$, $h$ is the orthogonal complement of $L$ through $x$. Then $\partial h \in \mathcal{F}_\gamma$. Let $t$ be the hyperplane spanned by $L$, $\ell$ and $\ell '$. Therefore $P\subset t$ and $t\cap h = Q$. Hence $\partial t \in \mathcal{R}_\gamma$ and $\partial Q \in \mathcal{K}_\gamma$.
\par Conversely if $\partial Q \in \mathcal{K}_\gamma$, there are $h \in \mathcal{F}_\gamma$ and $t \in \mathcal{R}_\gamma$ such that $\partial Q = h\cap t$. Let $\hat h$ and $\hat t$ be the hyperplanes bounded by $h$ and $t$ respectively. Then $\hat h$ is orthogonal to to $P$ through a line since $\partial P$ is orthogonal to every element of $\mathcal{F}_\gamma$. Whereas $\hat t$ contains $P$ so $Q=\hat h \cap \hat t$ must also intersect $P$ in the line $P\cap \hat h$ since $Q\cap P=\bigl(\hat h \cap \hat t \bigr)\cap P=\hat h \cap \bigl(\hat t \cap P \bigr)=\hat h \cap P$. Then $P$ is orthogonal to $\hat h$ through $P\cap Q$, so $Q\subset \hat h$ implies that $P$ is also orthogonal to $Q$ through the line $P\cap Q$.
\par Since $L$ is a subset of $\hat t$ and $L$ is orthogonal to $\hat h$ through $x$, $Q$ intersects $L$ at $x$. But $Q \subset \hat h$ so $Q$ is orthogonal to $L$.
\end{proof}

\begin{theorem} \label{theorem:secretiffscrewparabolic}
Let $\gamma$ be a screw parabolic isometry of $\mathbb{H}^4$ with fixed point $v$ and twisting plane $P$. Then $Q \subset \mathbb{H}^4$ is a plane orthogonal to $P$ through a line bounded by $v$ if and only if $\partial Q \in \mathcal{K}_{\gamma}$.
\end{theorem}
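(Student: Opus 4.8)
The plan is to conjugate the fixed point $v$ to $\infty$ and argue in the upper half-space model, exactly as in the proof of Theorem~\ref{theorem:secretiffpureparabolic}. After conjugation, $\gamma$ has the form $x \mapsto Ax + b$ with $A \in \mathrm{SO}(3)$ nontrivial, $Ab = b$ and $b \neq 0$, and the twisting plane $P$ is the vertical half-plane standing over the Euclidean line $\mathrm{span}\{b\}$. By the definitions of the pencils, $\mathcal{F}_\gamma = \mathcal{F}_\tau$ consists of the Euclidean planes orthogonal to $b$ together with $\infty$, while $\mathcal{R}_\gamma = \mathcal{F}_\rho$ consists of the Euclidean planes containing the line $\mathrm{span}\{b\}$ together with $\infty$. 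The first step is the observation that any such intersection is transverse: an $s \in \mathcal{F}_\gamma$ has normal direction $b$, whereas a $t \in \mathcal{R}_\gamma$ contains the direction $b$, so their normals are never parallel and $s \cap t$ is always a Euclidean line perpendicular to $b$ meeting $\mathrm{span}\{b\}$ in a single point, together with $\infty$.

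For the forward implication I would begin with a plane $Q$ orthogonal to $P$ whose intersection line $\ell = P \cap Q$ is bounded by $v = \infty$. Since $\ell$ is a geodesic of $P$ with an endpoint at $\infty$, it is the vertical half-line over some $p_0 \in \mathrm{span}\{b\}$; the orthogonality of $Q$ and $P$ along this vertical line then forces $Q$ to stand over a Euclidean line through $p_0$ in a direction $w$ perpendicular to $b$, because the dihedral angle of two vertical half-planes equals the Euclidean angle between the lines they stand over. Taking $s$ to be the Euclidean plane through $p_0$ orthogonal to $b$ and $t$ the Euclidean plane spanned by $\mathrm{span}\{b\}$ and $w$, one has $s \in \mathcal{F}_\gamma$, $t \in \mathcal{R}_\gamma$, and $s \cap t = \partial Q$, so $\partial Q \in \mathcal{K}_\gamma$.

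For the converse I would take $s \in \mathcal{F}_\gamma$ and $t \in \mathcal{R}_\gamma$ with $\partial Q = s \cap t$. The transversality step identifies $\partial Q$ as a Euclidean line perpendicular to $b$ meeting $\mathrm{span}\{b\}$ at a point $p_0$, together with $\infty$; since $s$ and $t$ both contain $\infty = v$, the circle $\partial Q$ passes through $v$. Hence $Q$ is the vertical half-plane over this line, $Q \cap P$ is exactly the vertical geodesic over $p_0$ (which is bounded by $v$), and because this standing line is perpendicular to $\mathrm{span}\{b\}$ the planes $Q$ and $P$ meet orthogonally, giving the geometric description.

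The hard part will be making precise the reduction from hyperbolic orthogonality of the planes to Euclidean orthogonality of the lines they stand over, and, in contrast with the pure loxodromic case of Theorem~\ref{theorem:secretiffpureloxodromic}, arguing without an axis $L$ to supply a second orthogonality relation. I expect the key point to be that the hypothesis ``bounded by $v$'' is exactly what is needed in place of that second relation: it forces $\ell$, and therefore $Q$, to stand vertically, after which perpendicularity to $b$ is equivalent to $Q \perp P$ and pins down the element of $\mathcal{K}_\gamma$ uniquely.
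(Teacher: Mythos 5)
Your proposal is correct and follows essentially the same route as the paper: in both directions you realize $\partial Q$ as the intersection of the element of $\mathcal{F}_\gamma$ through the finite endpoint of $\ell=Q\cap P$ with the element of $\mathcal{R}_\gamma$ determined by the hyperplane spanned by $P\cup Q$, and conversely read off orthogonality and the endpoint $v$ from such an intersection. The only difference is presentational — you normalize $v=\infty$ and phrase everything in terms of Euclidean planes and the conformal dihedral-angle fact, exactly as the paper itself does for the pure parabolic case, whereas the paper's proof of this theorem is worded synthetically.
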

\begin{proof}
Suppose $Q$ is a plane orthogonal to $P$ through a line bounded by $v$. There is a hyperplane $t$ contaning $Q\cup P$ since they intersect in a line. Let $\ell = Q \cap P$. There is a point $x\in \partial \ell \setminus \{v\}$. Then there is a unique $h_x \in \mathcal{F}_\gamma$ that contain $x$. Since $Q$ is orthogonal to $P$, $Q$ is contained in the hyperplane bounded by $h_x$. But $Q,P \subset t$ so $\partial t \in \mathcal{R}_\gamma$ and $\partial Q = (\partial t)\cap h_x$. Hence $\partial Q \in \mathcal{K}_\gamma$.
\par Conversely if $\partial Q \in \mathcal{K}_\gamma$, there are $h \in \mathcal{F}_\gamma$ and $t \in \mathcal{R}_\gamma$ such that $\partial Q = h\cap t$. Let $\hat h$ and $\hat t$ be the hyperplane bounded by $h$ and $t$ respectively. Then $\hat h$ is orthogonal to $P$ through a line bounded by $v$. Also $\hat t$ contains $P$, so $Q=\hat h \cap \hat t$ implies $Q\cap P =\bigl(\hat h \cap t\bigr)\cap P=\hat h \cap \bigl(\hat t \cap P \bigr)=\hat h \cap P$. Then $P$ is orthogonal to $\hat h$ through $P\cap Q$ and hence to $Q$. Since $v\in h \cap t$, then $v\in \partial Q$; $v\in \partial P$ implies that $v$ bounds the line $P\cap Q$.
\end{proof}

\section{Properties of half-turn banks} \label{section:justify}
The half-turn bank of an isometry of hyperbolic space serves as a collection of circles in which the isometry can be factored into a product of two half-turns. In this section, we show that any half-turn factorization of an orientation preserving isometry comes from the defined set called the half-turn bank.

\par In section \ref{subsection:commonperp2}, the common perpendicular line across a pair of ultra-parallel $(n-2)$-dimensional planes is constructed. In section \ref{subsection:combinations} all possible combinations of pairs of planes in $\mathbb{H}^4$ are enumerated to show the result of composing half-turns about those planes. Together with classification of isometries, the combinations tell how the involved planes intersect whenever an isometry is factored into a product of two half-turns. Section \ref{subsection:justify} has the details of proving that all half-turn factorizations come from half-turn banks. The theorem implies that a pair of isometries is linked by a half-turn if and only if they have a common circle in their half-turn banks.

\subsection{Codimension-$2$ version of common perpendicular} \label{subsection:commonperp2}
If $P$ and $Q$ are ultra-parallel hyperplanes in $\mathbb{H}^4$, there is a unique line perpendicular to them (See \cites{fenchel, algorithm, maskit}). If $P$ and $Q$ are ultra-parallel lines in $\mathbb{H}^4$, there is also a unique line perpendicular to them. The same result is shown for a pair of ultra-parallel $(n-2)$-dimensional subplanes of $\mathbb{H}^n$.

\begin{theorem} \label{theorem:commonperptwo}
For each pair of ultra-parallel planes $\alpha, \beta \in \mathbb{H}^4$, there is a unique line orthogonal to both $\alpha$ and $\beta$.
\end{theorem}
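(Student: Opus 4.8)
The plan is to realize the common perpendicular as the geodesic joining the pair of points that minimizes hyperbolic distance between $\alpha$ and $\beta$, and then to extract uniqueness from the strict negative curvature of $\mathbb{H}^4$. Concretely, I would study the function $F\colon \alpha\times\beta\to\mathbb{R}$ given by $F(p,q)=d(p,q)$, where $d$ is the hyperbolic metric. Since $\alpha$ and $\beta$ are totally geodesic, each is a complete convex copy of $\mathbb{H}^2$ inside $\mathbb{H}^4$, and because $\mathbb{H}^4$ is negatively curved the restriction of $d$ to a product of geodesics in $\alpha$ and $\beta$ is a convex function of the parameter. The first point to establish is that $F$ attains a positive minimum, and this is exactly where the \emph{ultra-parallel} hypothesis enters: it means the closures of $\alpha$ and $\beta$ in $\mathbb{H}^4\cup\partial\mathbb{H}^4$ are disjoint, so along any sequence $(p_n,q_n)$ escaping to infinity in $\alpha\times\beta$ the distance $d(p_n,q_n)$ tends to $\infty$. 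Hence a minimizing sequence remains in a compact region and, after passing to a subsequence, converges to a minimizer $(p_0,q_0)$ with $D:=d(p_0,q_0)>0$, the positivity coming from $\alpha\cap\beta=\emptyset$.

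Next I would derive orthogonality, which gives existence. Let $\ell$ be the geodesic segment from $p_0$ to $q_0$. Since $p_0$ minimizes $p\mapsto d(p,q_0)$ over $\alpha$, the first variation of arc length forces the unit tangent of $\ell$ at $p_0$ to be orthogonal to $T_{p_0}\alpha$; as the normal space of a $2$-plane in $\mathbb{H}^4$ is $2$-dimensional, this says precisely that $\ell$ meets $\alpha$ perpendicularly. The symmetric computation at $q_0$ shows $\ell$ meets $\beta$ perpendicularly. Extending $\ell$ to a complete geodesic line then produces a line orthogonal to both planes.

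For uniqueness I would first note that every common perpendicular is a global minimizer of $F$: if a line $\ell'$ meets $\alpha$ and $\beta$ orthogonally at $p_1$ and $q_1$, then the first variation of $F$ vanishes at $(p_1,q_1)$, so this is a critical point of the convex function $F$ and hence a global minimizer, giving $d(p_1,q_1)=D$. Now suppose $(p_0,q_0)$ and $(p_1,q_1)$ are two minimizing pairs. Parametrize the geodesic from $p_0$ to $p_1$ in $\alpha$ as $p_t$ and from $q_0$ to $q_1$ in $\beta$ as $q_t$ for $t\in[0,1]$. Convexity of $t\mapsto d(p_t,q_t)$ together with the equal endpoint values $D$ forces this function to be identically $D$. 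The union of the segments $p_t q_t$ would then be a flat Euclidean strip embedded totally geodesically in $\mathbb{H}^4$, which is impossible since $\mathbb{H}^4$ has constant curvature $-1$ and admits no flat strip of positive width. Therefore $p_0=p_1$ and $q_0=q_1$, so the common perpendicular is unique.

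The step I expect to be the \textbf{main obstacle} is uniqueness, and it is precisely where negative curvature is indispensable: in Euclidean $\mathbb{R}^4$ a pair of disjoint $2$-planes can admit an entire continuum of common perpendiculars, so any correct argument must invoke strict convexity of the distance function (equivalently, the absence of flat strips) rather than a naive dimension count. As an alternative route to the same two facts, which I would keep in reserve, one may show that the nearest-point projection $\pi_\beta|_\alpha\colon\alpha\to\beta$ is a strict contraction for ultra-parallel planes, so that $\pi_\alpha\circ\pi_\beta$ is a contraction of the complete space $\alpha$ with a unique fixed point by the Banach fixed-point theorem; that fixed point and its image are the feet of the unique common perpendicular. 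The properness argument guaranteeing attainment of the minimum should be written out with some care, but it is routine once the ultra-parallel hypothesis is in hand.
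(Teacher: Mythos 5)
Your proof is correct, but it takes a genuinely different route from the paper. The paper proves the general statement (Theorem \ref{theorem:commonperpgeneral}) by working in the hyperboloid model: it extends the two planes to $(n-1)$-dimensional linear subspaces $P',Q'$ of $\mathbb{R}^{n,1}$, splits into cases according to $\dim(P'\cap Q')$, passes to the Lorentz orthogonal complement $N=(P'\cap Q')^L$, and verifies by an explicit Lorentz-norm computation that $N\cap P'$ and $N\cap Q'$ are $2$-dimensional time-like subspaces, thereby reducing the problem to the classical common perpendicular of two ultra-parallel lines in a copy of $\mathbb{H}^3$. Your argument is instead metric-geometric: properness of the distance function on $\alpha\times\beta$ (using that ultra-parallel planes have disjoint closures in $\mathbb{H}^4\cup\partial\mathbb{H}^4$) gives a minimizing pair, first variation gives orthogonality, and strict convexity of the distance along geodesics --- equivalently the absence of flat strips in constant curvature $-1$ --- gives uniqueness, with the degenerate case $p_0=p_1$, $q_0\neq q_1$ disposed of by strict convexity of the distance from a point to a geodesic. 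What your approach buys is generality and robustness: it uses only that $\alpha$ and $\beta$ are disjoint closed convex sets with disjoint ideal boundaries in a complete simply connected manifold of strictly negative curvature, so codimension plays no role; it also cleanly delivers uniqueness, which in the paper's case analysis is asserted rather than argued in full (one must still check that every common perpendicular lies inside the subspace $N$). What the paper's approach buys is an explicit linear-algebraic construction of the perpendicular in coordinates, consistent with the constructive use of half-turn banks elsewhere in the paper, and a uniform treatment of all $n\geq 2$ within the framework already set up for the Lorentzian computations of Section \ref{section:disproveara}. Both arguments are complete modulo standard facts; yours leans on the flat strip theorem and convexity of $d$ on Hadamard manifolds, the paper's on the classical hyperplane and $\mathbb{H}^3$ cases.
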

 The definitions of half-turn banks can be extended to orientation preserving isometries of $\mathbb{H}^n$, but to justify its name, a more general statement is stated as follows.
\begin{theorem} \label{theorem:commonperpgeneral}
Let $n\geq 2$ be an integer. For each pair of ultra-parallel $(n-2)$-dimensional planes $\alpha, \beta \in \mathbb{H}^n$, there is a unique line orthogonal to both $\alpha$ and $\beta$.
\end{theorem}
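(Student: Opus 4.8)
The plan is to pass to the hyperboloid model $\mathbb{H}^n \subset \mathbb{R}^{n,1}$, with Lorentzian inner product $\langle\,,\,\rangle$ of signature $(n,1)$, and to translate the statement into linear algebra. Under the standard dictionary a totally geodesic $(n-2)$-plane $\alpha$ is $\mathbb{H}^n \cap V_\alpha$ for a unique linear subspace $V_\alpha$ of dimension $n-1$ and signature $(n-2,1)$; I write $W_\alpha = V_\alpha^{\perp}$, a positive-definite $2$-plane, and similarly $V_\beta, W_\beta$ for $\beta$. A short computation shows that the normal space of $\alpha$ at \emph{every} point $p \in \alpha$ is exactly the fixed plane $W_\alpha$, so a geodesic line (a signature $(1,1)$ plane $W$) is orthogonal to $\alpha$ if and only if $W = \langle p \rangle \oplus \langle u \rangle$ with $p \in V_\alpha$ timelike and $u \in W_\alpha$. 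The first thing I would record is the translation of the hypothesis: $\alpha$ and $\beta$ are ultra-parallel exactly when $V_\alpha \cap V_\beta$ contains no timelike or null vector, i.e.\ is positive definite, since a timelike vector there would give a common point and a null vector a common endpoint at infinity.

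The heart of the argument is a reduction to a low-dimensional totally geodesic subspace. Set $N = W_\alpha + W_\beta$ and $M = \mathbb{H}^n \cap N$. First I would prove the containment lemma: if $W$ is the plane of any common perpendicular, with feet $p\in\alpha$, $q\in\beta$ and unit directions $u\in W_\alpha$ at $p$ and $v\in W_\beta$ at $q$, then $u$ and $v$ lie in $W$ and are independent (here disjointness of $\alpha,\beta$ forces the length $d(p,q)>0$), so $W=\langle u,v\rangle\subseteq N$. Thus every common perpendicular already lies in $M$. Next, since $N = (V_\alpha\cap V_\beta)^{\perp}$ and $V_\alpha\cap V_\beta$ is positive definite, $N$ is a nondegenerate Lorentzian subspace; because $\dim(W_\alpha\cap W_\beta)\in\{0,1\}$ (it cannot be $2$, for then $\alpha=\beta$), $N$ has dimension $4$ or $3$ and $M$ is a totally geodesic copy of $\mathbb{H}^3$ or $\mathbb{H}^2$.

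To finish I would identify how $\alpha$ and $\beta$ meet $M$. Using $W_\alpha\subseteq N$ and taking orthogonal complements inside $N$, the trace $\alpha\cap M = \mathbb{H}^n\cap(V_\alpha\cap N)$ is a geodesic line of $M$ when $M\cong\mathbb{H}^3$ and a single point when $M\cong\mathbb{H}^2$, and likewise for $\beta$; ultra-parallelism of $\alpha,\beta$ passes to these traces. In the $\mathbb{H}^2$ case the common perpendicular is forced to run between the two points $\alpha\cap M$ and $\beta\cap M$, so it is the unique geodesic joining them; in the $\mathbb{H}^3$ case it must be a common perpendicular of the two ultra-parallel lines $\alpha\cap M,\ \beta\cap M$, which exists and is unique by the classical line case recalled above. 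In either case the containment lemma guarantees there is no other candidate in $\mathbb{H}^n$, yielding both existence and uniqueness. The remaining check, that perpendicularity computed inside $M$ is genuine perpendicularity in $\mathbb{H}^n$, is immediate: any direction in $T_pM\cap p^{\perp}$ meeting $\alpha\cap M$ orthogonally already lies in $W_\alpha$, the full normal plane of $\alpha$.

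The step I expect to be the main obstacle is the signature bookkeeping that powers the reduction: showing cleanly that ultra-parallelism is equivalent to $V_\alpha\cap V_\beta$ being positive definite, that this makes $N$ honestly Lorentzian of dimension $3$ or $4$, and that the traces $\alpha\cap M,\ \beta\cap M$ carry exactly the expected signatures. Coupled with the containment lemma $W\subseteq N$, this is what collapses the general $(n-2)$-plane problem onto the two base cases (the elementary fact that two points determine a unique geodesic, and the known common perpendicular of two ultra-parallel lines); everything else is routine.
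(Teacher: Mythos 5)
Your proposal is correct, and it shares the paper's basic strategy: pass to the hyperboloid model, observe that ultra-parallelism makes $V_\alpha\cap V_\beta$ space-like, split on whether that intersection has dimension $n-2$ or $n-3$, and reduce to a classical low-dimensional fact. The differences are in how the reduction is organized and, more importantly, in how uniqueness is secured. The paper uses two different ambient subspaces for the two cases --- $\mathrm{span}(V_\alpha\cup V_\beta)$ when $\dim(V_\alpha\cap V_\beta)=n-2$ (reducing to ultra-parallel hyperplanes of a copy of $\mathbb{H}^{n-1}$) and the Lorentz complement $(V_\alpha\cap V_\beta)^{L}$ when the dimension is $n-3$ (reducing to ultra-parallel lines in a copy of $\mathbb{H}^3$, with an explicit Lorentz-norm computation showing $N\cap V_\alpha$ is time-like) --- whereas you work uniformly with $N=W_\alpha+W_\beta=(V_\alpha\cap V_\beta)^{\perp}$ and land in $\mathbb{H}^2$ or $\mathbb{H}^3$. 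Your genuine addition is the containment lemma: since the direction of any common perpendicular at its feet must lie in $W_\alpha$ and in $W_\beta$, its whole plane is $\langle u,v\rangle\subseteq N$, so \emph{every} common perpendicular lives in $M$. This is what turns uniqueness-in-the-slice into uniqueness in $\mathbb{H}^n$; the paper's argument establishes existence and uniqueness inside the reduced subspace but does not explicitly rule out a perpendicular whose normal directions point out of it (recall the normal space to an $(n-2)$-plane at a point is $2$-dimensional, so perpendicularity alone does not confine a line to a prescribed totally geodesic subspace). The signature bookkeeping you flag as the main obstacle is routine and is exactly what the paper's norm computation carries out by hand, so your outline is sound; if anything, it is the tighter of the two arguments on the uniqueness claim.
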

\begin{proof}
Theorem \ref{theorem:commonperpgeneral} implies Theorem \ref{theorem:commonperptwo}, so constructing the common orthogonal line in a general setting is enough for a case in the proof of Theorem \ref{theorem:exhaustive}. Let $P$ and $Q$ be ultra-parallel $(n-2)$-dimensional planes of $\mathbb{H}^n$. If $n=2$, then $P$ and $Q$ are distinct points, and the unique line connecting them are vacuously orthogonal to $P$ and $Q$. If $n=3$, then $P$ and $Q$ are ultra-parallel lines so there is a unique line perpendicular to both of them (see \cites{fenchel,algorithm,maskit}). If $n\geq 4$, we use the hyperboloid model of $\mathbb{H}^n$ embedded in the Lorentzian space $\mathbb{R}^{n,1}$. Then $P$ and $Q$ extend to $(n -1)$-dimensional vector subspaces $P'$ and $Q'$ respectively. Since $P$ and $Q$ are ultra-parallel, the span of $P'\cup Q'$ is at least of dimension $n$. However, its dimension can not add up to $2n-2$ but only to a maximum of $n+1$.

Therefore, the dimension of the space-like subspace $P'\cap Q'$ is either $n-2$ or $n-3$. If it is $n-2$, then $\mathrm{span}(P' \cup Q')$ is $n$-dimensional and has $P'$ and $Q'$ as its hyperplanes with a unique common perpendicular line (See \cites{ratcliffe}). If $\mathrm{dim}(P'\cap Q')$ is $n-3$, then $N=\bigl(P'\cap Q'\bigr)^L$, the Lorentz orthogonal complement of $P'\cap Q'$, is a $4$-dimensional time-like vector subspace nontrivially intersecting both $P'$ and $Q'$ due to their large dimensions. Since $N$ contains ${P'}^L$ and ${Q'}^L$, which respectively intersect $P'$ and $Q'$ trivially, $N\cap P'$ and $N\cap Q'$ are $2$-dimensional subspaces. If they are time-like, then there is a unique line in $\mathbb{H}^n$ perpendicular to both $\mathbb{H}^n \cap N\cap P'$ and $\mathbb{H}^n \cap N\cap Q'$. We must show that $N\cap P'$ and $N\cap Q'$ are time-like vector subspaces.

\par Pick a time-like vector vector $x\in N$. Then $x \notin {P'}^L \cup {Q'}^L$ since ${P'}^L$ and ${Q'}^L$ are space-like. Let $w\in {P'}^L$ and $v \in {Q'}^L$ be nontrivial elements so they are linearly independent with $x$. Since ${P'}^L$ and ${Q'}^L$ are subsets of $N$, the spans of $\{x,w\}$ and $\{x,v\}$ are subspaces of $N$. Then the vector $-\frac{\langle x,w \rangle_L}{\langle w,w \rangle_L}w+x$ is an element of both $N$ and $P'$ since it is Lorentz orthogonal to $w\in{P'}^L$, and is a linear combination of $w$ and $x$. Then its Lorentz norm can be computed as follows.

\begin{align*}
\left \lVert -\frac{\langle x,w \rangle_L}{\langle w,w \rangle_L}w+x \right \rVert^2_L &= \frac{\langle x,w \rangle ^2_L}{\lVert w \rVert ^2 _L} -2\frac{\langle x,w \rangle ^2_L}{\lVert w \rVert ^2 _L} +\lVert x \rVert ^2 _L \\
\nonumber &=-\frac{\langle x,w \rangle ^2_L}{\lVert w \rVert ^2 _L} + \lVert x \rVert ^2 _L
\end{align*}
Since $x$ is time-like and $w$ is space-like, the quantity above is negative, making the vector time-like. Hence, $N\cap P'$ is a $2$-dimensional time-like subspace of $P'$. By replacing the vector $w$ with $v$, the same arguments show that $N\cap Q'$ is also a $2$-dimensional time-like subspace of $Q'$.

\end{proof}

\subsection{Combinations of a pair of planes} \label{subsection:combinations}
 There are a few ways for two planes in $\mathbb{H}^4$ to intersect. They can intersect in a line, a point, or in a point at infinity. The last case is not technically an intersection but points in the two planes can be arbitrarily near.

\par Let $P$, $Q$ be distinct planes in $\mathbb{H}^4$. There are four ways they can intersect or not intersect: ultra-parallel; both tangent at a unique point at infinity; in a line; or in a single point.
Each of these combinations forms a unique type of isometry of $H_PH_Q$. The following are the results.

\begin{lemma}
Let $P$ and $Q$ be ultra-parallel planes in $\mathbb{H}^4$. Then $H_PH_Q$ is a hyperbolic isometry.
\end{lemma}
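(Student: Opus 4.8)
The plan is to exhibit an invariant geodesic for $H_PH_Q$ on which the map acts as a nonzero translation; since an orientation-preserving isometry of $\mathbb{H}^4$ that possesses such an axis is by the classification either pure hyperbolic or pure loxodromic, this will identify $H_PH_Q$ as a hyperbolic isometry. First I would invoke Theorem \ref{theorem:commonperptwo} for the ultra-parallel pair $P,Q$ to obtain the unique line $L$ orthogonal to both, and set $x=L\cap P$ and $y=L\cap Q$. Ultra-parallelism forces $P$ and $Q$ to be disjoint even at infinity, so $x\neq y$ and $d(x,y)>0$.

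The heart of the argument is a convenient choice of reflection decomposition for each half-turn. The tangent space to $\mathbb{H}^4$ at $x$ splits as $T_xP\oplus N$, where $N$ is the $2$-dimensional normal plane to $P$ at $x$, and $H_P$ acts on $N$ as $-\mathrm{id}$; the unit tangent $e_L$ of $L$ at $x$ lies in $N$ precisely because $L\perp P$. Writing $-\mathrm{id}_N$ as the product of the reflections across the orthogonal lines $\langle e_L\rangle$ and $\langle e_L\rangle^{\perp}\cap N$ produces a decomposition $H_P=R_gR_p$ in which $g$ is the hyperplane through $P$ containing $L$ and $p$ is the hyperplane through $P$ orthogonal to $L$ at $x$. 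Then $R_g$ fixes $L$ pointwise while $R_p$ restricts on the geodesic $L$ to the reflection through $x$, so $H_P$ leaves $L$ invariant and acts on it as the point reflection at $x$. The identical analysis at $y$ shows that $H_Q$ leaves $L$ invariant and acts on it as the point reflection at $y$.

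Composing the two, $H_PH_Q$ leaves $L$ invariant, and on $L\cong\mathbb{R}$ it is the composition of the point reflections at $x$ and $y$, which is translation by $2\,d(x,y)\neq 0$. By convexity of the displacement function, this is the minimum displacement of $H_PH_Q$; being positive and attained on $L$, it rules out $H_PH_Q$ being elliptic (which would fix an interior point) or parabolic (whose infimal displacement is zero and unattained), and the invariant axis $L$ along which it translates makes it a hyperbolic isometry. I expect the main obstacle to be the second step: justifying that the orthogonal pair of hyperplanes realizing $H_P$ may be rotated about $P$ so that one member contains $L$ while the orthogonal member is perpendicular to $L$ at $x$. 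This rests on the $2$-dimensional normal-plane description of a half-turn as $-\mathrm{id}$ on $N$; once that is established, the invariance of $L$ under each half-turn and the translation computation are routine.
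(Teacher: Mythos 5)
Your proof is correct, and it shares the paper's starting point --- the common perpendicular line $L$ from Theorem \ref{theorem:commonperptwo} and the observation that both half-turns leave $L$ invariant --- but the decisive step is genuinely different. The paper stays synthetic: invariance of $L$ already excludes the parabolic case, and the elliptic case is killed by a midpoint argument (a fixed point $y$ of $H_PH_Q$ would force $H_P(y)=H_Q(y)$, whose midpoint with $y$ would lie in $P\cap Q$, contradicting ultra-parallelism). You instead choose, for each half-turn, the reflection decomposition adapted to $L$ (one hyperplane through $P$ containing $L$, the orthogonal one through $P$ perpendicular to $L$), deduce that $H_P$ and $H_Q$ restrict to $L$ as the point reflections at the feet $x$ and $y$, and read off that $H_PH_Q$ translates $L$ by $2\,d(x,y)>0$; the classification of isometries with a translated axis then finishes the argument. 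What your route buys is quantitative information the paper's proof does not give --- the translation length of $H_PH_Q$ equals twice the distance between $P$ and $Q$, and $L$ is identified as the axis --- at the cost of having to justify that the orthogonal pair of hyperplanes realizing $H_P$ can be chosen adapted to $L$ (equivalently, that $H_P$ is $-\mathrm{id}$ on the normal plane, so the decomposition is independent of the choice of orthogonal pair); your appeal to convexity of the displacement function to conclude that $2\,d(x,y)$ is the attained minimum is a standard fact but is stated somewhat tersely and could be replaced by the simpler remark that an elliptic has bounded orbits and a parabolic fixes only one boundary point, while $H_PH_Q$ fixes both endpoints of $L$ and moves every point of $L$. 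The paper's argument is shorter and needs no normal form for the half-turns.
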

\begin{proof}
If $P$ and $Q$ are ultra-parallel, then there is a unique line $N$ in $\mathbb{H}^4$ that is orthogonal to both $P$ and $Q$. Both $H_P$ and $H_Q$ leave $N$ invariant so $H_PH_Q$ must also leave $N$ invariant. That leaves two options for $H_PH_Q$: either hyperbolic or elliptic. Suppose $H_PH_Q$ has a fixed point $y$ inside $\mathbb{H}^4$. Then $H_P(y)=H_Q(y)$, so the midpoint between $y$ and $H_P(y)$ is an element of both $P$ and $Q$. This contradicts the hypothesis that $P$ and $Q$ are ultra-parallel.
\end{proof}
\begin{lemma}
Let $P$ and $Q$ be tangent planes in $\mathbb{H}^4$. Then $H_PH_Q$ is a parabolic isometry.
\end{lemma}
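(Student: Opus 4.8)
The plan is to follow the template of the preceding ultra-parallel lemma: exhibit a fixed point of $H_PH_Q$ at infinity to narrow down the type, rule out an interior fixed point by a midpoint argument, and then separately exclude the hyperbolic/loxodromic case so that only ``parabolic'' remains. First I would unpack the hypothesis: tangency means $P$ and $Q$ share a single point $v\in\partial\mathbb{H}^4$ and are otherwise disjoint inside $\mathbb{H}^4$. Since $H_P$ fixes $P$ pointwise it fixes $\partial P$ pointwise, and likewise $H_Q$ fixes $\partial Q$ pointwise; as $v\in\partial P\cap\partial Q$, the composition $\gamma:=H_PH_Q$ fixes $v$. By the classification of isometries it then remains to discard the elliptic and the hyperbolic/loxodromic possibilities.

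Next I would rule out that $\gamma$ is elliptic, i.e. that it has an interior fixed point, by reusing the midpoint argument. If $\gamma(y)=y$ for some $y\in\mathbb{H}^4$, then $H_Q(y)=H_P^{-1}(y)=H_P(y)$ since $H_P$ is an involution; writing $z$ for this common image, the midpoint of $y$ and $z$ is the foot of the perpendicular from $y$ to $P$ and simultaneously the foot from $y$ to $Q$, hence lies on $P\cap Q$ (when $y=z$ one has $y\in P\cap Q$ directly). This contradicts disjointness of $P$ and $Q$, so $\gamma$ has no interior fixed point and is not elliptic.

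The remaining step is to show $\gamma$ is not hyperbolic or loxodromic, and this is where I expect the main difficulty, since killing interior fixed points alone does not separate parabolics from loxodromics. I would conjugate so that $v=\infty$ in the upper half-space model. Then $P$ and $Q$ become vertical planes over two lines $\ell_P,\ell_Q\subset\mathbb{R}^3$, and tangency at $\infty$ together with disjointness in $\mathbb{H}^4$ forces $\ell_P\cap\ell_Q=\varnothing$ in $\mathbb{R}^3$ (the parallel or skew cases). The boundary restriction of $H_P$ fixes $\infty$ and fixes $\ell_P$ pointwise, which pins it down as the Euclidean half-turn about $\ell_P$, in particular a scaling-free isometry of $\mathbb{R}^3$; likewise for $H_Q$. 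Hence $\gamma|_{\widehat{\mathbb{R}^3}}$ is a Euclidean isometry $x\mapsto Ax+b$, and the same midpoint argument carried out in $\mathbb{R}^3$ shows it has no fixed point there (a fixed point would again force $\ell_P\cap\ell_Q\neq\varnothing$). A scaling-free Euclidean isometry fixing $\infty$ and no finite point extends to a pure parabolic (when $A$ is trivial, the parallel case) or a screw parabolic (when $A$ is nontrivial, the skew case) isometry of $\mathbb{H}^4$ in the normal form $x\mapsto Ax+b$ used earlier; in neither case is there the vertical scaling that a hyperbolic or loxodromic element fixing $\infty$ would require. This rules out the loxodromic/hyperbolic possibility and finishes the proof.

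As a coordinate-free alternative to the last step, I would note that $H_P$ and $H_Q$, being order-two isometries fixing $v$, each shift the Busemann function at $v$ by an amount that doubles under squaring and hence must vanish, so each preserves every horosphere centered at $v$; therefore $\gamma$ preserves them too, which is already incompatible with $\gamma$ being loxodromic. Either route reduces the claim to the two observations that $\gamma$ fixes $v$ and that $\gamma$ has no fixed point in $\mathbb{H}^4$, so the genuine content is entirely in excluding the loxodromic case.
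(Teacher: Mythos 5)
Your proof is correct and follows essentially the same route as the paper's: conjugate the tangency point to $\infty$, recognize $H_P$ and $H_Q$ as Euclidean half-turns about the disjoint lines $\partial P\setminus\{\infty\}$ and $\partial Q\setminus\{\infty\}$, and observe that their composition is a fixed-point-free Euclidean isometry (a translation in the parallel case, a screw motion in the skew case), hence parabolic. The only difference is cosmetic: the paper directly names the resulting translation or screw motion, while you reach the same conclusion by excluding interior fixed points via the midpoint argument and excluding the loxodromic case via the absence of scaling.
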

\begin{proof}
Let $P$ and $Q$ be distinct planes in $\mathbb{H}^4$ so that their boundaries meet at one point $x$ at infinity. By conjugation, we may assume $x=\infty$ of $\widehat{\mathbb{R}^3}$. Then $P'$ and $Q'$ are Euclidean lines. If $P'$ and $Q'$ form a Euclidean plane, there are several Euclidean lines perpendicular to both $P'$ and $Q'$ but all of them identify a unique direction or vector $v \in \mathbb{R}^3$. If they do not form a plane, there is a unique Euclidean line $N$ orthogonal to $P'$ and $Q'$, which identifies a direction $v \in \mathbb{R}^3$. In either case, all Euclidean lines with same direction as $v$ are left invariant by $H_PH_Q$. Furthermore, $H_PH_Q$ is a parabolic translation $(x\mapsto x +2v, \quad x \in \mathbb{R}^3 )$ if $P'$ and $Q'$ are coplanar or a screw parabolic isometry with twisting plane bounded by $N$ if not. So if $P$ and $Q$ are tangent, $H_PH_Q$ is definitely parabolic.
\end{proof}
\begin{lemma}
Let $P$ and $Q$ be distinct planes in $\mathbb{H}^4$ intersecting in a line. Then $H_PH_Q$ is a type-I elliptic isometry whose twisting plane is orthogonal to the hyperplane containing $P\cup Q$ through $P\cap Q$.
\end{lemma}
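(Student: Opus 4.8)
The plan is to realize both half-turns using a single common reflection, namely the reflection across the hyperplane spanned by the two planes, so that this reflection cancels in the product $H_PH_Q$ and leaves a composition of two reflections meeting in the claimed twisting plane.

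First I would set $\ell = P\cap Q$ and let $h$ be the unique hyperplane containing $P\cup Q$; this exists because $P$ and $Q$ are distinct planes meeting in the line $\ell$, so that $\dim \mathrm{span}(P\cup Q)=2+2-1=3$. Inside $\mathbb{H}^4$ the hyperplane $h$ has a one-dimensional orthogonal direction at each of its points, and I would let $W$ be the union of the normal lines to $h$ along $\ell$. Then $W$ is a plane with $W\cap h=\ell$ and $W\perp h$, and it is precisely the plane orthogonal to $h$ through $P\cap Q$ named in the statement.

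Next I would build the two half-turn decompositions around the common factor $R_h$. Let $h_P$ be the hyperplane orthogonal to $h$ meeting $h$ in $P$ (the union of the normal lines to $h$ along $P$), and similarly let $h_Q$ be orthogonal to $h$ meeting $h$ in $Q$. Since $h_P\perp h$ and $h_P\cap h=P$, the definition of half-turn gives $H_P=R_{h_P}R_h=R_hR_{h_P}$, the two orthogonal reflections commuting; likewise $H_Q=R_{h_Q}R_h=R_hR_{h_Q}$. Multiplying and using $R_hR_h=\mathrm{id}$ then yields $H_PH_Q=R_{h_P}R_{h_Q}$. It remains to identify $h_P\cap h_Q$: because $\ell\subset P$ and $\ell\subset Q$, both $h_P$ and $h_Q$ contain the line $\ell$ together with the normal direction to $h$ along $\ell$, so both contain $W$; and since $P\neq Q$ forces $h_P\neq h_Q$, two distinct hyperplanes both containing the plane $W$ must intersect exactly in $W$. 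Hence $H_PH_Q=R_{h_P}R_{h_Q}$ is a composition of reflections across distinct hyperplanes intersecting in the plane $W$, which is by definition a type-I elliptic isometry with twisting plane $W$, as claimed.

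The step I expect to require the most care is the geometric bookkeeping in $\mathbb{H}^4$: verifying that $h_P$ and $h_Q$ are genuine hyperplanes orthogonal to $h$ with $h_P\cap h=P$ and $h_Q\cap h=Q$, and that their intersection is precisely the two-dimensional $W$ rather than something larger. The central cancellation itself is routine once one invokes the standard fact that reflections across orthogonal hyperplanes commute.
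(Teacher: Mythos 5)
Your proof is correct, but it takes a genuinely different route from the paper's. You factor each half-turn through the common reflection $R_h$ in the hyperplane $h$ spanned by $P\cup Q$ (using that $h_P\perp h$ with $h_P\cap h=P$, and likewise for $Q$), cancel $R_h$, and read off $H_PH_Q=R_{h_P}R_{h_Q}$ with $h_P\cap h_Q=W$; since two distinct hyperplanes meet in at most a plane and both contain the plane $W$, this exhibits $H_PH_Q$ directly in the form required by the paper's definition of a type-I elliptic isometry, with twisting plane $W$ orthogonal to $h$ through $P\cap Q$. The paper instead identifies the fixed-point set directly: it observes that $\partial h$ lies in both permuted pencils, so $h$ is invariant and $L=P\cap Q$ is fixed pointwise; it then argues that the plane $\tau$ orthogonal to $h$ through $L$ is invariant and hence fixed pointwise, and that $\tau$ is the entire fixed-point set, the last step being carried out via an illustration in the boundary model $\widehat{\mathbb{R}^3}$. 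Your cancellation argument is more self-contained and algebraic -- it is essentially the same device the paper later uses in Theorem \ref{theorem:revisedkaran} to produce half-turn factorizations from commuting reflections -- while the paper's argument stays closer to the pencil formalism it develops for the rest of the results. The only point worth making explicit in your version is that the fixed-point set of $R_{h_P}R_{h_Q}$ is exactly $h_P\cap h_Q$ and no larger (so that the twisting plane, defined in the paper as the fixed-point set, really is $W$); this is standard, since a point fixed off $h_P\cap h_Q$ would force the two hyperplanes to share a normal direction at a common point, which distinct hyperplanes through $W$ cannot do.
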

\begin{proof}
Let $P$ and $Q$ be planes of $\mathbb{H}^4$ intersecting in a line $L$. Then they form a unique hyperplane $h$ whose boundary is an element of both permuted pencils of $H_P$ and $H_Q$. It follows that $h$ is left invariant, and $L$ is fixed pointwise by $H_PH_Q$. The plane $\tau$ orthogonal to $h$ through $L$ is also left by $H_PH_Q$, but since $L$ is fixed pointwise, $H_PH_Q$ also fixes $\tau$ pointwise. To illustrate it with the model $\widehat{\mathbb{R}^3}$ as the boundary at infinity, assume $P'$ is a Euclidean line and $Q'$ is a circle intersecting $P'$ in two points $y_1$ and $y_2$ . Then $Q'\cup P'$ forms a unique Euclidean plane $\hat h$. There is a unique circle $\hat \tau$ passing through $y_1$ and $y_2$ centered at their midpoint, and inside the Euclidean plane orthogonal to $\hat h$ through $P'$. Then $\hat \tau$ is orthogonal to both $P'$ and $Q'$ through $P'\cap Q'$. Both half-turns $H_P$ and $H_Q$ flip $\hat \tau$ across $P'\cap Q'$ so the composition $H_PH_Q$ fixes $\hat \tau$ pointwise. Since $\hat \tau$ is the unique plane orthogonal to $h$ through $P\cap Q$, $\hat \tau$ is the only fixed point set of $H_PH_Q$. It follows that $H_PH_Q$ is a type-I elliptic isometry. Moreover, its twisting plane is orthogonal to $\hat h$.
\end{proof}

\begin{lemma} \label{lemma:singlepointtype2}
Let $P$ and $Q$ be distinct planes in $\mathbb{H}^4$ intersecting in a single point. Then $H_PH_Q$ is a type-II orientation preserving elliptic isometry.
\end{lemma}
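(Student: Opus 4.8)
The plan is to reduce the statement to a linear-algebra computation in the tangent space at the common point. First I would record the easy structural facts. Since $H_P$ and $H_Q$ are each compositions of two reflections, $H_PH_Q$ is a composition of four reflections and is therefore orientation preserving. Writing $x$ for the single point of $P\cap Q$, both half-turns fix $x$ (because $x\in P$ and $x\in Q$), so $H_PH_Q$ fixes $x$; in particular it can be neither hyperbolic nor parabolic, leaving only the elliptic case or the identity. As $P\neq Q$ we have $H_P\neq H_Q$, so $H_PH_Q\neq\mathrm{id}$, and hence $H_PH_Q$ is elliptic. It then remains only to distinguish type-I from type-II, that is, to show that the fixed-point set of $H_PH_Q$ in $\mathbb{H}^4$ is exactly $\{x\}$ and not a plane.

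To see this I would conjugate so that $x$ is the center of the ball model, so that every orientation-preserving isometry fixing $x$ acts as an element of $\mathrm{SO}(4)$, and the geodesic planes $P,Q$ through $x$ become $2$-dimensional linear subspaces $V_P,V_Q\subset\mathbb{R}^4$. The hypothesis $P\cap Q=\{x\}$ translates to $V_P\cap V_Q=\{0\}$, and since $\dim V_P+\dim V_Q=4$ this forces $V_P\oplus V_Q=\mathbb{R}^4$, and dually $V_P^\perp\cap V_Q^\perp=(V_P+V_Q)^\perp=\{0\}$. Under this identification the half-turn about $P$ is the linear map acting as the identity on $V_P$ and as $-\mathrm{id}$ on $V_P^\perp$ (rotation by $\pi$ in the normal plane, the product of the two orthogonal reflections defining it), and likewise for $Q$.

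The crux is to show that $M:=H_PH_Q$ has no nonzero fixed vector. Suppose $Mv=v$; since $H_P$ is an involution this is equivalent to $H_Pv=H_Qv$. Decompose $v=a+b=c+d$ with $a\in V_P$, $b\in V_P^\perp$, $c\in V_Q$, $d\in V_Q^\perp$. Then $H_Pv=a-b$ and $H_Qv=c-d$, so $a-b=c-d$ while $a+b=c+d$; adding and subtracting the two relations gives $a=c$ and $b=d$. Hence $a=c\in V_P\cap V_Q=\{0\}$ and $b=d\in V_P^\perp\cap V_Q^\perp=\{0\}$, so $v=0$. Thus the $+1$-eigenspace of $M$ is trivial, and the fixed-point set of $H_PH_Q$ in $\mathbb{H}^4$ is the single point $x$. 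Therefore $H_PH_Q$ is an orientation-preserving type-II elliptic isometry, as claimed.

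I expect the step requiring the most care to be pinning down the linear action of a half-turn—confirming that $H_P$ acts as $-\mathrm{id}$ on the two-dimensional normal space $V_P^\perp$ and as the identity on $V_P$—together with the translation of the geometric hypothesis ``intersecting in a single point'' into the transversality statement $V_P\oplus V_Q=\mathbb{R}^4$, from which both $V_P\cap V_Q=\{0\}$ and $V_P^\perp\cap V_Q^\perp=\{0\}$ used in the computation follow.
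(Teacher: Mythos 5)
Your proof is correct, but it takes a genuinely different route from the paper's. The paper argues synthetically: it runs through cases on the location of a putative fixed point $y\neq p$ relative to $P$ and $Q$, and in the main case derives a contradiction from the midpoint of $y$ and $H_Q(y)$, which would have to lie in $P\cap Q$ and force $L$ to be orthogonal to both planes at $p$, whence $P$ and $Q$ would sit in a common hyperplane and meet in a line. You instead linearize at the common point: in the ball model centered at $x$ the half-turns become the involutions $\mathrm{id}\oplus(-\mathrm{id})$ relative to $V_P\oplus V_P^\perp$ and $V_Q\oplus V_Q^\perp$, and the fixed-vector equation $H_Pv=H_Qv$ collapses to $V_P\cap V_Q=\{0\}$ and $V_P^\perp\cap V_Q^\perp=\{0\}$. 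Your computation is shorter, avoids the case split entirely, and makes orientation-preservation and the non-identity claim automatic ($\det H_P=1$ and the trivial $+1$-eigenspace, respectively); it also hands you the eigenstructure of $H_PH_Q$ for free, which is exactly what the paper has to develop separately afterwards in Lemma \ref{lemma:type2planes} and Corollaries \ref{corollary:type2involution} and \ref{corollary:type2notinvolution}. The paper's synthetic argument, by contrast, needs no model or normalization and works verbatim in $\mathbb{H}^n$ for suitable codimensions. The only points you flagged as delicate --- that $H_P$ acts as $-\mathrm{id}$ on $V_P^\perp$, and that $P\cap Q=\{x\}$ forces $V_P\cap V_Q=\{0\}$ and hence $V_P\oplus V_Q=\mathbb{R}^4$ --- both check out: the former follows from composing the two orthogonal hyperplane reflections through $V_P$, and the latter because a nonzero vector in $V_P\cap V_Q$ would give a whole geodesic in $P\cap Q$.
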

\begin{proof}
Suppose $P$ and $Q$ are planes $\mathbb{H}^4$ intersecting in a unique point $p$. Then for each $x\in \mathbb{H}^4\setminus\{p\}$, we show that $H_PH_Q(x)\neq x$. Suppose first that $x\in Q$. Then $H_PH_Q(x)=H_P(x)$ which is not equal to $x$ since $x\notin P$. For the rest of this combination of $P$ and $Q$, assume that $x\in \mathbb{H}^4\setminus Q$. If it happens that $H_Q(x) \in P$, then $H_PH_Q(x)=H_Q(x)\neq x$. If $H_Q(x) \notin P$ but $x\in P$, then $H_PH_Q(x)\notin P$ so $H_PH_Q(x)\neq x$.
\par The last possibility is when both $H_Q(x)$ and $x$ are outside $Q\cup P$. We prove that $H_P$ can not map $H_Q(x)$ back to $x$. Suppose $H_PH_Q(x)=x$. Let $m$ be the midpoint between $H_Q(x)$ and $x$, and let $L$ be the line connecting $x$ to $H_Q(x)$. Then $m$ is in $Q$ since it is the midpoint between $x$ and $H_Q(x)$. Likewise $m$ must also be in $P$ as $H_P(x)=H_Q(x)$. There is only one point in $P\cap Q$ so this midpoint must be $p$. Furthermore, $L$ is orthogonal to both $P$ and $Q$ through $p$. It follows that the hyperplane orthogonal to $L$ through $p$ contains both $P$ and $Q$. Since intersecting planes within a hyperplane must meet in at least a line, $P\cap Q$ must have at least a line which has more points other than $p$. This contradicts that $P$ and $Q$ intersect only in a point.
\end{proof}

\par Since $H_PH_Q$ is type II elliptic, one might wonder where the invariant planes or lines are located. If $P$ and $Q$ are orthogonal complements of each other, then $H_Q$ and $H_P$ respectively rotate them half-way around. Otherwise, $H_PH_Q$ has a canonical pair of invariant planes. To locate these invariant planes, we can use the ball model of $\mathbb{H}^4$ embedded in $\mathbb{R}^4$ which is conformal to both the Euclidean geometry and the spherical geometry of $S^3$. The problem simplifies further if $P$ and $Q$ are conjugated to intersect at the origin. The following lemma locates the canonical invariant planes relative to $P$ and $Q$.

\begin{lemma} \label{lemma:type2planes}
Let $P$ and $Q$ be $2$-dimensional vector subspaces of $\mathbb{R}^4$ intersecting trivially. Then there are $2$-dimensional vector subspaces $\tau_1$ and $\tau_2$ orthogonal complements of each other such that $\tau_1$ and $\tau_2$ are orthogonal to $P$ and $Q$ through two separate lines.
\end{lemma}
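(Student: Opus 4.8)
The plan is to reduce the lemma to the canonical normal form of an ordered pair of $2$-planes in Euclidean space under $\mathrm{O}(4)$, namely their principal angles. By the standard theory of principal angles (equivalently the CS decomposition), there are orthonormal bases $u_1,u_2$ of $P$ and $v_1,v_2$ of $Q$ together with angles $\theta_1,\theta_2$ satisfying $\langle u_i,v_j\rangle=\cos\theta_i\,\delta_{ij}$. Because $P\cap Q=\{0\}$, no $u_i$ lies in $Q$, so $\cos\theta_i\neq 1$ and hence $\theta_1,\theta_2\in(0,\tfrac{\pi}{2}]$ are both positive; this positivity is exactly what will later force the intersections to be genuine, separate lines.

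The key structural observation is that the relations $\langle u_1,u_2\rangle=\langle v_1,v_2\rangle=\langle u_1,v_2\rangle=\langle u_2,v_1\rangle=0$ make the planes $A=\mathrm{span}\{u_1,v_1\}$ and $B=\mathrm{span}\{u_2,v_2\}$ orthogonal to one another, and each is genuinely $2$-dimensional since $\theta_i>0$. Thus $\mathbb{R}^4=A\oplus B$ orthogonally, and I would set $\tau_1=A$ and $\tau_2=B$, which are orthogonal complements of each other by construction. Introducing the orthonormal frame $e_1=u_1$, $e_2=(v_1-\cos\theta_1 u_1)/\sin\theta_1$, $e_3=u_2$, $e_4=(v_2-\cos\theta_2 u_2)/\sin\theta_2$ puts everything into coordinates: $A=\mathrm{span}\{e_1,e_2\}$, $B=\mathrm{span}\{e_3,e_4\}$, $P=\mathrm{span}\{e_1,e_3\}$, and $Q=\mathrm{span}\{v_1,v_2\}$ with $v_1\in A$ and $v_2\in B$.

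It then remains to verify the three geometric conclusions for $\tau_1=A$, the argument for $\tau_2=B$ being identical by symmetry. First, $A\cap P=\mathbb{R}e_1$ and $A\cap Q=\mathbb{R}v_1$ are each a single line, and they are separate precisely because $v_1\notin\mathbb{R}e_1$ whenever $\theta_1>0$. Second, both intersections are orthogonal: the direction in $A$ perpendicular to the common line $\mathbb{R}e_1$ is $e_2$, while the perpendicular direction in $P$ is $e_3$, and $e_2\perp e_3$; likewise the direction in $A$ perpendicular to $\mathbb{R}v_1$ is $w_1$, the unit vector of $A$ orthogonal to $v_1$, while the perpendicular direction in $Q$ is $v_2$, and $w_1\in A\perp v_2\in B$. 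These give exactly the two orthogonal intersections "through two separate lines" demanded by the statement.

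The only real obstacle is obtaining the principal-angle normal form cleanly together with its degenerate endpoint. When some $\theta_i=\tfrac{\pi}{2}$, the pairing vector cannot be manufactured by projecting $u_i$ into $Q$ (that projection vanishes), so the cleanest route is to invoke Jordan's simultaneous normal form directly rather than to diagonalize the self-adjoint operator $x\mapsto\mathrm{proj}_P\,\mathrm{proj}_Q\,\mathrm{proj}_P(x)$ on $P$ by hand and then normalize projections; the normal form supplies $v_i$ uniformly and the verification above goes through verbatim. I note in passing that the resulting $\tau_1,\tau_2$ are precisely the two rotation planes of the double rotation $H_PH_Q\in\mathrm{SO}(4)$, which is why they are the canonical invariant planes sought in the surrounding discussion, though this fact is not needed for the proof. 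Everything here is finite-dimensional Euclidean linear algebra, with no hyperbolic input.
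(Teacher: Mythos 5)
Your argument is correct, and it produces exactly the same pair of planes as the paper: your $\tau_1=\mathrm{span}\{u_1,v_1\}$ and $\tau_2=\mathrm{span}\{u_2,v_2\}$ are the spans of the two pairs of principal vectors of $(P,Q)$, which is precisely what the paper constructs as $\mathrm{span}\{v_P,v_Q\}$ and $\mathrm{span}\{w_P,w_Q\}$. The difference lies in how the cross-orthogonality relations are obtained. You cite the principal-angle (CS/Jordan) normal form as a known theorem, which hands you $\langle u_i,v_j\rangle=\cos\theta_i\,\delta_{ij}$ all at once; the paper derives only the two relations it actually needs, $\langle v_P,w_Q\rangle=\langle v_Q,w_P\rangle=0$, from scratch, by maximizing the inner product over the compact set $(S^3\cap P)\times(S^3\cap Q)$ to produce the first principal pair and then reading off the orthogonality from the vanishing of the first derivative at the maximum. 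The paper's route is self-contained, requires no citation, and works uniformly even in your worrisome degenerate case $\theta_i=\pi/2$ (the first-order condition holds at any maximizer, even when the maximum value is $0$). Your route buys an explicit orthonormal frame $e_1,\dots,e_4$ in which the verification that the intersections are genuine, distinct, orthogonal lines is spelled out more carefully than in the paper, which asserts that part rather briefly; and your closing remark identifying $\tau_1,\tau_2$ with the rotation planes of $H_PH_Q$ is exactly the use the paper makes of this lemma in Corollary \ref{corollary:type2notinvolution}.
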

\begin{proof}
Let $U_P= S^3 \cap P $ and $U_Q= S^3\cap Q $ be unit (Euclidean) circles. The Euclidean inner product restricted to $U_P\times U_Q$ has a maximum value realized by a pair $(v_P, v_Q)$ since $U_P\times U_Q$ is compact and the inner product is continuous. The vectors $v_P,v_Q$ can be augmented by $w_P\in P$ and $w_Q\in Q$ so that the sets $\{v_P , w_P\}$ and $\{v_Q , w_Q\}$ are orthonormal bases. Let $\tau_1 = \mathrm{span}\{v_P , v_Q\}$ and $\tau_2 = \mathrm{span}\{w_P , w_Q\}$. The dimension of $\tau_i$ is $2$ since $v_P \neq v_Q$. It follows that $\tau_1 \cap \tau_2= \{ 0 \}$ and they intersect $P$ and $Q$ through two separate lines. What is left to show is that $\tau_1$ and $\tau_2$ are orthogonal to $P$, $Q$ and each other. It is sufficient to show that $\langle v_P, w_Q \rangle = \langle v_Q, w_P \rangle =0$.
\par The function $\theta \mapsto \langle (\cos \theta) v_P +(\sin \theta) w_P, v_Q \rangle$ is continuous and smooth with a maximum at $\theta = 0$. Its derivative $\theta \mapsto \langle -(\sin \theta) v_P +(\cos \theta) w_P, v_Q\rangle$ therefore has a zero value at $\theta =0$. Hence $\langle v_Q, w_P \rangle = 0$. Similarly the function $\theta \mapsto \langle (\cos \theta) v_Q + (\sin \theta) w_Q, v_P \rangle$ has derivative $\theta \mapsto \langle- (\sin \theta) v_Q +(\cos \theta) w_Q, v_P \rangle$ with a zero value at $\theta =0$, implying that $\langle w_Q, v_P \rangle = 0$.
\end{proof}

\begin{corollary} \label{corollary:type2involution}
Let $P$ and $Q$ be orthogonal planes in $\mathbb{H}^4$ intersecting in a unique point. Then $H_PH_Q$ is a type-II elliptic involution that leaves every line through $P\cap Q$ invariant.
\end{corollary}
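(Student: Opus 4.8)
The plan is to reduce everything to a linear computation in the ball model. By Lemma \ref{lemma:singlepointtype2}, since $P$ and $Q$ meet in a single point, $H_PH_Q$ is already known to be an orientation preserving type-II elliptic isometry; so it remains only to upgrade ``type-II'' to ``involution'' and to verify the invariance of every line through $P\cap Q$. First I would conjugate so that the common point $p=P\cap Q$ is the center of the ball model of $\mathbb{H}^4$ inside $\mathbb{R}^4$. Then $P$ and $Q$ become $2$-dimensional vector subspaces meeting only at the origin, and orthogonality of the planes means their (tangent) subspaces are orthogonal in $\mathbb{R}^4$. Since a $2$-dimensional subspace orthogonal to the $2$-dimensional space $P$ must lie in the $2$-dimensional space $P^\perp$, this forces $Q=P^\perp$.

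Next I would linearize the half-turns. Writing $H_P=R_{h_1}R_{h_2}$ as the composition of reflections across an orthogonal pair of hyperplanes meeting in $P$ (as in the definition of the half-turn), both $h_1$ and $h_2$ pass through the origin, because each contains $P\ni 0$; hence each reflection is an orthogonal linear map and so is $H_P$. On $P$ it is the identity, and on the orthogonal $2$-plane $P^\perp=Q$ it is the product of two orthogonal line-reflections meeting at right angles, that is, rotation by $\pi$, which is $-\mathrm{id}$. Thus $H_P$ acts as $+1$ on $P$ and as $-1$ on $Q$, and symmetrically $H_Q$ acts as $+1$ on $Q$ and as $-1$ on $P$.

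Composing, $H_PH_Q$ acts as $(-1)(+1)=-1$ on $P$ and as $(+1)(-1)=-1$ on $Q$, hence as $-\mathrm{id}$ on all of $\mathbb{R}^4$: it is the antipodal map. This is visibly an involution whose only fixed point is the origin $p$, so $H_PH_Q$ is a type-II elliptic involution. Finally, the antipodal map sends each Euclidean diameter of the ball to itself, so every geodesic line through $p=P\cap Q$ is left invariant, as claimed.

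The step I expect to be the main obstacle is the linearization: justifying cleanly that, in the ball model with $p$ at the origin, a half-turn about a plane through the origin really is the orthogonal involution that is $+1$ on the plane and $-1$ on its orthogonal complement. Everything else is then forced. Here one must be careful that the reflecting hyperplanes, being geodesic hyperplanes through the origin, are genuinely linear (intersections of $\mathbb{R}^4$-hyperplanes with the ball), so that their reflections restrict to bona fide orthogonal linear maps and the entire computation takes place in $\mathrm{SO}(4)$ acting on $\mathbb{R}^4$.
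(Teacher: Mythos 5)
Your proof is correct, and while it shares the paper's basic strategy --- conjugate $p=P\cap Q$ to the center of the conformal ball model and work inside $\mathrm{O}(4)$ --- the way you extract the conclusion is genuinely more direct. You compute the half-turns themselves as linear maps: $H_P$ is $+\mathrm{id}$ on $P$ and $-\mathrm{id}$ on $P^\perp=Q$ (being the product of two Euclidean reflections in orthogonal linear hyperplanes containing $P$), and symmetrically for $H_Q$, so the product is forced to be $-\mathrm{id}$ on all of $\mathbb{R}^4$, the antipodal map; the involution property, the uniqueness of the fixed point, and the invariance of every diameter then all fall out at once. The paper never writes $H_P$ and $H_Q$ down explicitly: it first argues that $(H_PH_Q)^2$ fixes an orthonormal basis adapted to $P\cup Q$ and hence is the identity, and then invokes the spectral theorem for orthogonal involutions to diagonalize $H_PH_Q$ and rule out $+1$ eigenvalues (which would create extra fixed points), arriving at $-I$ by a longer route. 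Your version is more elementary in that it avoids the spectral theorem, and it has the side benefit of making transparent why the half-turn is well defined independently of the chosen pair of reflecting hyperplanes. The step you flag as the main obstacle --- that geodesic hyperplanes through the center of the ball are intersections of linear hyperplanes with the ball, so the associated reflections are honest Euclidean orthogonal reflections --- is indeed the load-bearing linearization, but it is standard for the ball model and is precisely the identification the paper itself relies on when it treats $H_PH_Q$ as an element of $\mathrm{SO}(4)$.
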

\begin{proof}
Since $P$ and $Q$ are orthogonal, they are left invariant by both $H_P$ and $H_Q$. The half-turns are involutions themselves so applying $H_PH_Q$ twice to $P\cup Q$ is the identity map on $P\cup Q$. The conformal ball model can be used to conjugate $H_PH_Q$ into an element of $\mathrm{SO}(4)$, with $p$ corresponding to the origin. Then $P$ and $Q$ form vector spaces that are orthogonal complements of each other. Any Euclidean orthonormal bases of them can be combined into an orthonormal basis $\mathcal{B}$ of $\mathbb{R}^4$. If $x\in \mathbb{H}^4=B_4$ is outside $P\cup Q$, it can be expressed as a linear combination of vectors in $\mathcal{B}\subset \partial (P\cup Q)$. The composition $\left(H_PH_Q\right)^2$ as an element of $\mathrm{SO}(4)$ therefore maps $x$ back to itself. Hence $H_PH_Q$ is an involution.

\par A matrix of
$\mathrm{SO}(n)$ has its inverse and transpose equal, but if it is also an involution, then
$H_PH_Q$ as a matrix is also symmetric and thus diagonalizable (Spectral Theorem). There are four linearly independent eigenvectors that correspond to lines in
$\mathbb{H}^4$ that are pairwise perpendicular through
$p$. The diagonal entries are all $-1$ since a $1$ value would make $H_PH_Q$ have more than one fixed point and other values would make the matrix not in $\mathrm{SO}(4)$. Each vector is hence mapped into its opposite. In the conformal ball model of hyperbolic space, the action of $H_PH_Q$ on lines passing through $p$ is reflection across $p$.
\end{proof}

\begin{corollary} \label{corollary:type2notinvolution}
Let $P$ and $Q$ be non-orthogonal planes in $\mathbb{H}^4$ intersecting in a unique point. Then $H_PH_Q$ is a type-II elliptic isometry with a unique unordered pair of invariant planes orthogonal to each other through the fixed point of $H_PH_Q$.
\end{corollary}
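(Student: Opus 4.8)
The plan is to reduce to the linear setting and combine the type-II classification of Section \ref{section:typesofisometry} with Lemma \ref{lemma:type2planes}. Using the conformal ball model, conjugate so that the common point $p = P \cap Q$ is the origin; then $H_P H_Q$ is realized by an element of $\mathrm{SO}(4)$, and $P$, $Q$ become $2$-dimensional vector subspaces intersecting trivially, since they meet only at $p$. By Lemma \ref{lemma:singlepointtype2} the composition $H_PH_Q$ is an orientation preserving type-II elliptic isometry fixing $p$, so the whole content of the corollary is to show that it is not the antipodal involution and to identify its invariant pair.

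First I would rule out that $H_PH_Q$ is the antipodal map. In the linear model each half-turn $H_P$ acts as $+1$ on $P$ and $-1$ on its orthogonal complement $P^\perp$, and likewise for $H_Q$. If $H_PH_Q = -\mathrm{Id}$, then since $H_Q$ is an involution we obtain $H_P = -H_Q$; comparing the $(+1)$-eigenspaces gives $P = Q^\perp$, i.e.\ $P$ and $Q$ are orthogonal, contradicting the hypothesis. Hence $H_PH_Q$ is type-II elliptic but not an involution, so by the classification in Section \ref{section:typesofisometry} it possesses a unique unordered pair of invariant planes meeting orthogonally at $p$.

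It remains to locate this pair, which is where Lemma \ref{lemma:type2planes} enters. Applying the lemma to the subspaces $P$ and $Q$ produces orthogonal complements $\tau_1$ and $\tau_2$, each orthogonal to both $P$ and $Q$ through a line. I would then show that each $\tau_i$ is invariant under $H_P$ and $H_Q$ separately. Indeed $\tau_1 \cap P$ is a line $\ell$, and the complementary direction of $\tau_1$ (the part of its spanning $Q$-vector orthogonal to $\ell$) lies in $P^\perp$ by the orthogonality-through-a-line produced by the lemma; since $H_P$ fixes $\ell$ and negates $P^\perp$, it preserves $\tau_1$. The identical argument with $Q$ in place of $P$ gives $H_Q(\tau_1) = \tau_1$, so $H_PH_Q(\tau_1) = \tau_1$, and symmetrically $H_PH_Q(\tau_2) = \tau_2$. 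By the uniqueness already established, $\{\tau_1, \tau_2\}$ is the invariant pair, and it is orthogonal through $p$ as required. As a sanity check on the angles, the restriction of $H_PH_Q$ to $\tau_1$ is the product of the reflections of $\tau_1$ across the lines $\tau_1 \cap P$ and $\tau_1 \cap Q$, hence a rotation through twice the principal angle between $P$ and $Q$, which is not a multiple of $\pi$ precisely because $P$ and $Q$ are non-orthogonal.

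The step I expect to be the main obstacle is the invariance verification: making precise that, in the linear model, $\tau_i$ splits as a line in $P$ together with a line in $P^\perp$, so that the half-turn $H_P$ carries $\tau_i$ to itself. This hinges on correctly reading off from Lemma \ref{lemma:type2planes} that the complementary direction of $\tau_i$ is genuinely orthogonal to all of $P$, not merely to the line $\tau_i \cap P$, which follows from the two vanishing inner products $\langle v_P, w_Q\rangle = \langle v_Q, w_P\rangle = 0$ established there.
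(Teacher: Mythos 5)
Your proposal follows essentially the same route as the paper: reduce to the linear model at the fixed point, get the type-II classification from Lemma \ref{lemma:singlepointtype2}, produce $\tau_1,\tau_2$ from Lemma \ref{lemma:type2planes}, and check invariance under $H_P$ and $H_Q$ separately. Your invariance verification is in fact more explicit than the paper's one-line justification (``they are orthogonal to the half-turns' fixed point sets''): the splitting $\tau_1=(\tau_1\cap P)\oplus(\tau_1\cap P^{\perp})$, which rests on $\langle v_Q,w_P\rangle=0$ from Lemma \ref{lemma:type2planes}, is exactly the point that needs making. Where you diverge is uniqueness: you rule out the antipodal map via the clean eigenspace argument $H_PH_Q=-\mathrm{Id}\Rightarrow H_P=-H_Q\Rightarrow P=Q^{\perp}$ and then cite the Section \ref{section:typesofisometry} classification, whereas the paper argues directly that the restriction of $H_PH_Q$ to $\tau_1$ is a rotation by twice the minimal principal angle, hence by an angle in $(0,\pi)$, and takes $\tau_1$ to be the unique invariant plane of minimal rotation angle with $\tau_2=\tau_1^{\perp}$. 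One caution: the classification you cite is stated under the hypothesis that \emph{both} associated angles avoid integer multiples of $\pi$, while you have only excluded $\alpha=\beta=\pi$; the rotation angle on $\tau_2$ can equal $\pi$ (this happens when the larger principal angle between $P$ and $Q$ is $\pi/2$), so strictly you should either extend the classification to that case or fall back on the minimal-angle argument, which your own ``sanity check'' already supplies. Finally, note that neither your argument nor the paper's addresses the isoclinic case in which the two rotation angles coincide and the invariant plane is far from unique; that is an issue with the corollary's uniqueness claim itself rather than with your proof specifically.
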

\begin{proof}
If $P$ and $Q$ intersect only in one point, then $H_PH_Q$ is type II elliptic isometry. There are planes $\tau_1$ and $\tau_2$ that are orthogonal to each other through the fixed point of $H_PH_Q$ and also orthogonal to both $P$ and $Q$ through separate lines. Both $H_P$ and $H_Q$ leave $\tau_1$ and $\tau_2$ invariant since they are orthogonal to the half-turns' fixed point sets. Then $\tau_1$ and $\tau_2$ are also left invariant by $H_PH_Q$.

\par It must be shown that $\tau_1$ and $\tau_2$ are the unique invariant planes. In order to show their uniqueness, $H_PH_Q$ can be conjugated to a $4\times 4$ matrix in $\mathrm{SO}(4)$ so that the upper-left and lower-right $2\times 2$ blocks are elements of $\mathrm{SO}(2)$. Since $H_PH_Q$ has only one fixed point, either both these blocks are diagonal matrices with $-1$ in their entries or one of these blocks is non-diagonalizable.
\par Recall that the construction of $\tau_1$ allows it to have $v_P\in P$ and $v_Q\in Q$ so that the angle between $v_P$ and $v_Q$ is at minimum. If $P$ and $Q$ are not orthogonal, this angle is less than $\pi/2$. The action of $H_PH_Q$ on $\tau_1$ is a composition of reflections across $\mathrm{span}\{ v_P\}$ and $\mathrm{span}\{ v_Q\}$. Thus, one of the non-diagonalizable blocks corresponds to the rotation of $\tau_1$ in an angle other than $0$ and $\pi$. Then $\tau_1$ is the unique invariant plane of $H_PH_Q$ with minimum angle or rotation. The orthogonal complement of $\tau_1$ is $\tau_2$ which is also unique.
\end{proof}

\subsection{The half-turn bank is exhaustive} \label{subsection:justify}
\par In this section, we show that every half-turn factorization of an orientation preserving isometry comes from its half-turn bank. The proof still uses cases but the lemmas from section \ref{subsection:combinations} restrict the possibilities for how a pair of planes intersect.
\begin{theorem} \label{theorem:exhaustive}
Let $\gamma$ be an orientation preserving isometry of $\mathbb{H}^4$. Then for every half-turn factorization $H_PH_Q$ of $\gamma$, the circles $\partial P$ and $\partial Q$ are elements of $\mathcal{K}_\gamma$.
\end{theorem}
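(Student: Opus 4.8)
The plan is to reduce the statement to a single factor and then run through the four ways a pair of planes can meet. Since $H_P$ and $H_Q$ are involutions, $\gamma^{-1}=H_QH_P$, and because the half-turn bank of an isometry is built only from its axis, fixed points, twisting plane and invariant planes -- all of which $\gamma$ and $\gamma^{-1}$ share -- one has $\mathcal{K}_{\gamma^{-1}}=\mathcal{K}_\gamma$. Hence it suffices to prove that the \emph{second} factor always lands in the bank: establishing $\partial Q\in\mathcal{K}_\gamma$ for an arbitrary factorization $\gamma=H_PH_Q$ then gives $\partial P\in\mathcal{K}_{\gamma^{-1}}=\mathcal{K}_\gamma$ by applying the same statement to $\gamma^{-1}=H_QH_P$. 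With this reduction in place, I would invoke the combination lemmas of Section \ref{subsection:combinations}: the way $P$ and $Q$ intersect (ultra-parallel, tangent, in a line, or in a single point) both determines and is determined by the type of $\gamma$. The proof becomes a case analysis in which, for each intersection pattern, I identify the invariants of $\gamma$ explicitly from $P$ and $Q$ and quote the matching geometric criterion from Section \ref{subsection:geometriccriteria}.

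The three non-hyperbolic patterns are the routine ones, because the combination lemmas already hand over the relevant invariants. If $P\cap Q$ is a line $\ell$, then $\gamma$ is type-I elliptic and its twisting plane $\Pi$ is orthogonal to the hyperplane $h=\mathrm{span}(P\cup Q)$ through $\ell$; since $Q\subset h$ contains $\ell$, the plane $Q$ is orthogonal to $\Pi$ through $\ell$, and Theorem \ref{theorem:secretiffelliptic} gives $\partial Q\in\mathcal{K}_\gamma$. If $P\cap Q$ is a single point, then by Lemma \ref{lemma:singlepointtype2} $\gamma$ is type-II elliptic; when $P\perp Q$ the point $P\cap Q$ is the fixed point of the involution $\gamma$ and $Q$ passes through it, so $\partial Q$ lies in the bank by definition, while in the non-orthogonal case Lemma \ref{lemma:type2planes} and Corollary \ref{corollary:type2notinvolution} produce the invariant planes $\tau_1,\tau_2$ orthogonal to $Q$ along separate lines, whence the hyperplanes $\mathrm{span}(\tau_1\cup Q)$ and $\mathrm{span}(\tau_2\cup Q)$ meet exactly in $Q$ and exhibit $\partial Q$ as an element of $\mathcal{K}_\gamma$ through Definition \ref{definition:factorizationoftype2elliptic}. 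If $\partial P$ and $\partial Q$ are tangent, conjugate the point of tangency to $\infty$ so that $P',Q'$ are Euclidean lines; when they are coplanar $\gamma$ is pure parabolic with direction perpendicular to $Q'$ and Theorem \ref{theorem:secretiffpureparabolic} applies, and when they are skew the relevant combination lemma identifies the twisting plane as the one bounded by the common perpendicular $N$, so that $Q$ is orthogonal to it along a vertical geodesic and Theorem \ref{theorem:secretiffscrewparabolic} applies.

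The genuine obstacle is the ultra-parallel case, where $\gamma$ is hyperbolic with axis equal to the common perpendicular $L=N$ of Theorem \ref{theorem:commonperptwo}, but the combination lemma supplies neither the twisting plane nor the distinction between pure hyperbolic and loxodromic. When $\gamma$ is pure hyperbolic the criterion is just $Q\perp L$, which holds because $L$ is the common perpendicular, so Theorem \ref{theorem:secretiffpurehyperbolic} finishes it. When $\gamma$ is loxodromic I would extract the twisting plane $\Pi$ from the conjugation identity $H_Q\gamma H_Q=\gamma^{-1}$: since $\gamma$ and $\gamma^{-1}$ share the axis $L$ and the twisting plane $\Pi$, this identity forces $H_Q(L)=L$ and $H_Q(\Pi)=\Pi$. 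The induced map $H_Q|_\Pi$ is then an isometric involution of $\Pi\cong\mathbb{H}^2$ preserving $L$, while $\gamma|_\Pi$ is a fixed-point-free translation along $L$ (its rotational part fixes $\Pi$ pointwise); the only way a genuine half-turn can restrict to $\Pi$ compatibly is as a reflection in a geodesic perpendicular to $L$, so $Q\cap\Pi$ is a line and $Q\perp\Pi$. I expect the delicate point to be ruling out the degenerate alternative in which $H_Q|_\Pi$ is a point reflection, that is, $Q$ meets $\Pi$ in the single point $Q\cap L$ as its orthogonal complement; this is excluded by observing that it would force $H_P=\gamma H_Q$ to restrict to $\Pi$ as a point reflection as well, making $P$ another orthogonal complement of $\Pi$ along $L$ and hence $\gamma=H_PH_Q$ pure hyperbolic rather than loxodromic, a contradiction. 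Once $Q\perp\Pi$ through a line is secured, together with $Q\perp L$, Theorem \ref{theorem:secretiffpureloxodromic} yields $\partial Q\in\mathcal{K}_\gamma$ and completes the proof.
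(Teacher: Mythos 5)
Your argument is correct, but it is organized quite differently from the paper's. The paper never reduces to a single factor: for each type of $\gamma$ it directly constructs, for $P$ and $Q$ simultaneously, explicit hyperplanes $s\in\mathcal{F}_\gamma$ and $t\in\mathcal{T}_\gamma$ (or $\mathcal{R}_\gamma$, or $\mathcal{F}_{\rho_i}$ in the type-II case) whose intersections are $\partial P$ and $\partial Q$, and it never invokes the criteria of Section \ref{subsection:geometriccriteria}. Your two structural choices --- the reduction $\partial P\in\mathcal{K}_{\gamma^{-1}}=\mathcal{K}_\gamma$ via $\gamma^{-1}=H_QH_P$, and routing the cases through Theorems \ref{theorem:secretiffelliptic}--\ref{theorem:secretiffscrewparabolic} --- are both sound (the banks depend only on the axis, fixed points and twisting/invariant planes, which $\gamma$ and $\gamma^{-1}$ share, the parabolic pencils depending on the direction only up to sign) and they shorten the case analysis considerably. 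The most substantive divergence is the loxodromic case: the paper identifies the twisting plane as $h_P\cap h_Q$ where $h_P=\mathrm{span}(L\cup P)$ and $h_Q=\mathrm{span}(L\cup Q)$, and reads off the pencil memberships from that configuration, whereas you derive $H_Q(L)=L$ and $H_Q(\Pi)=\Pi$ from the conjugation identity $H_Q\gamma H_Q=\gamma^{-1}$ and then classify the involution $H_Q|_\Pi$, excluding the point-reflection alternative because it would force $H_P|_\Pi$ to be a point reflection as well and hence $\gamma$ to be pure hyperbolic. Your route is more conceptual and isolates the one genuinely delicate verification; the paper's is more constructive, handing over the actual spheres whose intersection is $\partial Q$. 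If you flesh this out, do record two small points: in the loxodromic case $H_Q|_\Pi$ also cannot be the identity, since $Q$ meets $L\subset\Pi$ orthogonally in a single point so $Q\neq\Pi$; and in the type-II non-involution case the hyperplanes $\mathrm{span}(\tau_1\cup Q)$ and $\mathrm{span}(\tau_2\cup Q)$ are distinct because their coincidence would put $P$ and $Q$ in a common hyperplane, contradicting $P\cap Q$ being a single point.
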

\begin{proof}
\par The proofs depend on the class of isometry of $\gamma$. \begin{itemize}
\item $\gamma$ is hyperbolic.
\par If $\gamma$ is hyperbolic, let $L$ be its axis. The only combination for $P$ and $Q$ is that they are ultra-parallel. The common perpendicular line of $P$ and $Q$ is left invariant by $H_PH_Q$ so it must be $L$. Then both $P$ and $Q$ are orthogonal to the axis of $\gamma$. Let $h_P$ be the hyperplane spanned by $L$ and $P$. Similarly, let $h_Q$ be the hyperplane spanned by $L$ and $Q$. Then $\partial h_P\in \mathcal{F}_{H_P}$ and $\partial h_Q\in \mathcal{F}_{H_Q}$ with $P\subset h_P $; $Q\subset h_Q $.
\par If $h_P=h_Q$, there are $f_P \in \mathcal{F}_{H_P} $ and $f_Q \in \mathcal{F}_{H_Q} $ such that $H_P=R_{f_P}R_{h_P} $ and $H_Q=R_{h_Q}R_{f_Q} $ are reflection factorizations. If $h_P=h_Q$, then $\gamma = H_PH_Q=R_{f_P}R_{f_Q}$ is a pure hyperbolic isometry with $\partial f_P, \partial f_Q \in\mathcal{F}_\gamma $ and $\partial h_P= \partial h_Q \in \mathcal{T}_\gamma$. So $\partial P= \partial (h_P\cap f_P) \in \mathcal{K}_\gamma $ and $\partial Q=\partial (h_Q\cap h_Q) \in \mathcal{K}_\gamma $.
\par If $h_P\cap h_Q$ is a plane $\tau$, then $\tau$ intersects $P$ and $Q$ in two different lines. The half-turns $H_P$ and $H_Q$ reflect $\tau$ across these lines and thus leave $\tau$ invariant. If $\gamma$ is pure loxodromic, then $h_P\neq h_Q$, so the twisting plane of $\gamma$ matches $\tau$. Still, the hyperplanes $h_Q^\perp$ orthogonal to $h_Q$ through $Q$ and $h_P^\perp$ orthogonal to $h_P$ through $P$ are also orthogonal to $L$ so $\partial h_P^\perp, \partial h_Q^\perp \in \mathcal{F}_\gamma$. Both $h_P$ and $h_Q$ contain $\tau$ so $\partial h_P, \partial h_Q \in \mathcal{R}_\gamma$. Since $P=h_P \cap h_P^ \perp $ and $Q= h_Q\cap h_Q^\perp $, then $\partial P, \partial Q \in \mathcal{K}_\gamma$.
\item $\gamma$ is parabolic.
\par If $\gamma$ is parabolic, then $P$ and $Q$ are tangent at infinity. For simpler illustration, assume that the fixed point of $\gamma$ is $\infty$. Then the boundaries $(\partial P, \partial Q)$ of $P$ and $Q$ are straight non-crossing Euclidean lines in $\mathbb{R}^3$. Any Euclidean line commonly perpendicular to the boundaries of $P$ and $Q$ forms the same direction exactly equal to that of $\gamma$. Recall that the direction of the map $x \mapsto Ax + b$ is the Euclidean line spanned by $b$ in $\mathbb{R}^3$. Let $B$ the direction of $\gamma$. Then $B$ is either Euclidean-parallel or equal to any common perpendicular between $\partial P$ and $\partial Q$. Let $h_P$ and $h_Q$ be the Euclidean planes orthogonal to $B$ through $\partial P$ and $\partial Q$ respectively. Let $f_P$ be the Euclidean plane orthogonal to $h_P$ through $\partial P$ and let $f_Q$ be Euclidean plane orthogonal to $h_Q$ through $\partial Q$. Then $h_P, f_P \in \mathcal{F}_{H_P}$ and $h_Q,f_Q \in \mathcal{F}_{H_Q}$. Since $(h_P,f_P)$ and $(h_Q,f_Q)$ are pairwise orthogonal, $H_P=R_{h_P}R_{f_P}$ and $H_Q=R_{f_Q}R_{h_Q}$ are reflection factorizations.
\par If $f_P=f_Q$, then $\gamma=H_PH_Q=R_{h_P}R_{h_Q}$ is pure parabolic, so $f_P=f_Q \in \mathcal{T}_\gamma$ and $h_P, h_Q \in \mathcal{F}_\gamma$. If $f_P\cap f_Q $ is a Euclidean line $\tau$, it is left invariant by $H_P$ and $H_Q$ and hence by $\gamma$. This implies that $\tau$ bounds the twisting plane of $\gamma$ and therefore $f_P, f_Q \in \mathcal{R}_\gamma$. Still, $\partial P, \partial Q \in \mathcal{K}_\gamma$.
\item $\gamma$ is type-I elliptic.
\par If $\gamma$ is type-I elliptic, then $P$ and $Q$ intersect in a line. Let $\tau$ be the twisting plane of $\gamma$. There is also a unique hyperplane $h$ spanned by $P$ and $Q$. Then $\tau$ is orthogonal to $h$ through $P\cap Q$. The planes $P$, $Q$ and $\tau$ pairwise intersect at $P\cap Q$ while $\tau$ is orthogonal to both $P$ and $Q$ through $P\cap Q$. Let $h_P$ be the hyperplane spanned by $\tau$ and $P$; let $h_Q$ be the hyperplane spanned by $Q$ and $\tau$. It follows that $\partial h\in \mathcal{T}_\gamma$ and $\partial h_P, \partial h_Q \in \mathcal{F}_{\gamma}$. Since $P=h_P \cap h$ and $Q=h_Q \cap h$, we have $\partial P, \partial Q \in \mathcal{K}_\gamma$.
\item $\gamma$ is type-II elliptic.
\par If $\gamma$ is type-II elliptic, then $P$ and $Q$ intersect in a unique point $p$. Using the conformal ball model of $\mathbb{H}^4$ inside $\mathbb{R}^4$, we may assume that $p$ is the origin. Then $P$ and $Q$ extend to Euclidean planes that intersect only at the origin. There are planes $\tau_1$ and $\tau_2$ that are orthogonal complements of each other and also orthogonal to both $P$ and $Q$ through separate lines (Lemma \ref{lemma:type2planes}). Let $h_P$, $ h_Q $, $h_1$ and $h_2$ be hyperplanes defined as follows.
\begin{align*}
 h_{P} &= \mathrm{span}(\tau_1\cup P) & h_{Q} &= \mathrm{span}(\tau_2\cup Q)\\
 h_1 &= \mathrm{span}(Q\cup \tau_1) & h_2 &= \mathrm{span}(P\cup \tau_2)
\end{align*}
The configuration of planes and hyperplanes yields to $\tau_1 \subset h_P \cap h_1$, $\tau_2 \subset h_Q \cap h_2$, $P=h_P \cap h_2$ and $Q=h_Q \cap h_1$.

\par There are two options for $\gamma$ and the planes $\tau_1,\tau_2$. Either $\gamma$ has an associated angle that is not an integer multiple of $\pi$ (Corollary \ref{corollary:type2notinvolution}) or $\gamma$ is an involution (Corollary \ref{corollary:type2involution}). In the former case, $\gamma$ has a unique pair of invariant planes that must match $\tau_1$ and $\tau_2$. Then $\gamma=\rho_1\rho_2$ where $\rho_1$ and $\rho_2$ are type-I elliptic isometries whose respective twisting planes are $\tau_1$ and $\tau_2$. Since $\partial h_2, \partial h_P \in \mathcal{F}_{\rho_2}$ and
$\partial h_1, \partial h_Q \in \mathcal{F}_{\rho_1}$, we have
$\partial P, \partial Q \in \mathcal{K}_\gamma$. In the latter case,
$\partial P$ and $\partial Q$ are already in
$\mathcal{K}_\gamma$. In the latter case, $P$ and $Q$ are already in $\mathcal{K}_\gamma$. 

\end{itemize}
\end{proof}
\begin{corollary} \label{corollary:punchline}
Let $A$ and $B$ be orientation preserving isometries of $\mathbb{H}^4$. Then the pair $A$ and $B$ is linked by half-turns if and only if $\mathcal{K}_A \cap \mathcal{K}_B$ is nonempty. \end{corollary}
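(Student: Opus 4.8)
The plan is to read off the corollary directly from the two structural results already in hand: Theorem~\ref{theorem:revisedkaran}, which shows that the half-turn bank furnishes half-turn factorizations, and Theorem~\ref{theorem:exhaustive}, which shows conversely that every half-turn factorization is drawn from the half-turn bank. Taken together these say that for an orientation preserving isometry $\delta$, a circle $k$ lies in $\mathcal{K}_\delta$ if and only if $\delta$ admits a half-turn factorization having $H_k$ as one of its two factors. The corollary is then obtained by matching the common factor $\beta$ in the definition of linking with the common circle $k$ in the intersection $\mathcal{K}_A\cap\mathcal{K}_B$.

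For the direction assuming $\mathcal{K}_A\cap\mathcal{K}_B\neq\emptyset$, I would pick a circle $k$ in the intersection. Applying Theorem~\ref{theorem:revisedkaran} to $A$ yields $k_1\in\mathcal{K}_A$ with $A=H_{k_1}H_k$, and applying it to $B$ yields $k_2\in\mathcal{K}_B$ with $B=H_kH_{k_2}$. Each of $H_{k_1}$, $H_k$, $H_{k_2}$ is a half-turn about a plane, hence an orientation preserving involution with $2$-dimensional fixed-point set. Setting $\alpha=H_{k_1}$, $\beta=H_k$ and $\gamma=H_{k_2}$ gives $A=\alpha\beta$ and $B=\beta\gamma$ with the single shared factor $\beta=H_k$, so the pair is linked by half-turns.

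For the converse, suppose $A=\alpha\beta$ and $B=\beta\gamma$ for half-turns $\alpha,\beta,\gamma$ about the planes $R$, $P$, $S$ respectively. Then $A=H_{\partial R}H_{\partial P}$ is a half-turn factorization of $A$, so Theorem~\ref{theorem:exhaustive} places $\partial P$ (and $\partial R$) in $\mathcal{K}_A$; likewise $B=H_{\partial P}H_{\partial S}$ is a half-turn factorization of $B$, so Theorem~\ref{theorem:exhaustive} places $\partial P$ in $\mathcal{K}_B$. Hence the common circle $\partial P$ lies in $\mathcal{K}_A\cap\mathcal{K}_B$, which is therefore nonempty.

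The proof is brief precisely because the two cited theorems carry all of the geometric content; I expect no genuine obstacle here, since the real difficulty was already discharged in establishing Theorems~\ref{theorem:revisedkaran} and~\ref{theorem:exhaustive}. The only point demanding care is the bookkeeping that identifies the shared involution $\beta=H_{\partial P}$ of the linking condition with the shared circle $k=\partial P$ of the bank intersection, which is exactly the equivalence packaged in the first paragraph. I would also remark that the degenerate cases in which $A$ or $B$ is the identity are absorbed automatically: by the convention that $\mathcal{K}_{\mathrm{id}}$ consists of all circles bounding planes, the equivalence ``$k\in\mathcal{K}_\delta$ if and only if $\delta$ factors through $H_k$'' persists, so no separate argument is needed.
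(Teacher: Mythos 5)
Your proof is correct and follows essentially the same route as the paper: the forward implication is Theorem~\ref{theorem:exhaustive} applied to the shared factor, and the reverse implication is Theorem~\ref{theorem:revisedkaran} applied to a shared circle in the bank intersection. The only difference is that you make the citations and the identity-case bookkeeping explicit, which the paper leaves implicit.
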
 \begin{proof} If $A$ and $B$ are linked by half-turns, then there are planes bounded by $\alpha$, $\beta$ and $\delta$ such that $A=H_\alpha H_\beta$ and $B=H_\beta H_\delta$. But $\beta$ is an element of both $\mathcal{K}_A$ and $\mathcal{K}_B$. \par If $\mathcal{K}_A \cap \mathcal{K}_B$ is nonempty, let $\beta$ be one of its elements. Then there are $\alpha\in \mathcal{K}_A$ and
$\beta\in \mathcal{K}_B$ such that $A=H_\alpha H_\beta$ and
$B=H_\beta H_\delta$.
\end{proof}

\section{Conditions for linking in dimension 4} \label{section:conditions}
\par In this section, some conditions for a pair of isometries in $\mathbb{H}^4$ to be linked are stated and proved. The main idea is to find geometric or computational requirements for a given pair to be linked. The conditions are divided into cases depending on which type of isometry is given. Note that in the definition of linked pairs, the order of the isometries matters. Whereas in this paper, the order of the isometries does not matter once they are linked (See Theorem \ref{theorem:revisedkaran}).

\par The conditions for linking pairs with twisting planes are quite demanding, justifying the theorem of Basmajian and Maskit \cite{arabernard} that linked pairs are of measure zero. Still, pencils can be a useful tool for finding a common perpendicular plane to factorize orientation preserving isometries.
\begin{theorem}
If $A$ and $B$ are pure hyperbolic isometries of $\mathbb{H}^4$ with ultra-parallel axes, then there is a plane $P$ orthogonal to both axes of $A$ and $B$. Hence, $A$ and $B$ are linked.
\end{theorem}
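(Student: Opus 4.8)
The plan is to reduce the linking assertion to a single geometric fact and then produce the needed plane in the hyperboloid model. By Theorem~\ref{theorem:secretiffpurehyperbolic}, the half-turn bank of a pure hyperbolic isometry with axis $L$ is exactly the set of (boundaries of) planes orthogonal to $L$; so $\mathcal{K}_A$ consists of the planes orthogonal to the axis $L_A$ of $A$, and $\mathcal{K}_B$ of those orthogonal to the axis $L_B$ of $B$. By Corollary~\ref{corollary:punchline}, the pair is linked by half-turns as soon as $\mathcal{K}_A\cap\mathcal{K}_B$ is nonempty. Hence it suffices to exhibit one plane $P$ orthogonal to both $L_A$ and $L_B$: then $\partial P\in\mathcal{K}_A\cap\mathcal{K}_B$, and the pair is linked.

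To construct $P$, I would first take the common perpendicular line $g$ of the ultra-parallel axes $L_A,L_B$, whose existence is recorded in Section~\ref{subsection:commonperp2} (the two lines span a totally geodesic subspace $M$ of dimension $2$ or $3$, and $g\subseteq M$). Let $g$ meet $L_A$ orthogonally at $a$ and $L_B$ orthogonally at $b$. The idea is to \emph{thicken} $g$ into a plane in a direction normal to $M$. Concretely, in the hyperboloid model $\mathbb{H}^4\subset\mathbb{R}^{4,1}$, write $V_A,V_B,V_g$ for the $2$-dimensional time-like subspaces of $L_A,L_B,g$. Then $V_g=\mathrm{span}\{a,b\}\subseteq V_M:=V_A+V_B$, and since $\dim V_M\le 4$ the Lorentz-orthogonal complement $V_M^{L}$ is a nonzero space-like subspace. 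I would pick a unit vector $e\in V_M^{L}$ and set $W=V_g\oplus\mathrm{span}\{e\}$; this $W$ has signature $(2,1)$, hence is time-like and defines a plane $P$.

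The verification rests on a preliminary characterization I would establish first: a plane with subspace $W$ is orthogonal to a line with subspace $V$ precisely when $V=\mathrm{span}\{x,u\}$ with $x\in W$ time-like (the intersection point) and $u\in W^{L}$ space-like (the direction of the line). Granting this, let $u_A\in V_A$ be the direction of $L_A$ at $a$, so that $V_A=\mathrm{span}\{a,u_A\}$ with $a$ time-like in $W$. Then $u_A\perp a$ automatically, $u_A$ is orthogonal to the direction of $g$ at $a$ because $g\perp L_A$, and $u_A\perp e$ because $e\perp V_M\supseteq V_A$; since these three vectors span $W$, we get $u_A\in W^{L}$ and therefore $P\perp L_A$. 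The identical computation at $b$ with the direction $u_B$ of $L_B$, using $g\perp L_B$ and $e\perp V_B$, gives $P\perp L_B$, completing the construction.

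The hard part will be the orthogonality characterization and the bookkeeping around it, in particular checking that the \emph{single} normal vector $e$ is orthogonal to both axis directions at once. This is exactly what forces the choice $e\in(V_A+V_B)^{L}$ rather than a direction merely normal to one axis, and it is the only place where the ultra-parallel hypothesis (which guarantees $V_A+V_B$ is time-like of dimension at most $4$, so that $V_M^{L}\neq\{0\}$) genuinely enters. Once the directions $u_A,u_B$ are identified and the signature of $W$ is checked, the remaining steps are routine.
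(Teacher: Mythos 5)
Your proof is correct, and it rests on the same two pillars as the paper's: the identification of $\mathcal{K}_A$ and $\mathcal{K}_B$ with the planes orthogonal to the respective axes (Theorem \ref{theorem:secretiffpurehyperbolic}) together with Corollary \ref{corollary:punchline}, and the common perpendicular line of the two axes as the seed of the construction. Where you differ is in how that line is thickened into a plane. The paper argues synthetically and splits into two cases: if the axes are coplanar it takes any plane through the common perpendicular orthogonal to the plane they span; if not, it forms the hyperplane $C$ containing both axes and takes the unique plane orthogonal to $C$ through the common perpendicular. You instead work once and for all in the hyperboloid model, adjoining to $V_g=\mathrm{span}\{a,b\}$ a space-like vector $e\in(V_A+V_B)^L$; this absorbs both of the paper's cases (they correspond to $\dim(V_A+V_B)=3$ or $4$) at the cost of having to prove the Lorentzian characterization of plane--line orthogonality, which the paper never states explicitly but which is standard and entirely in the spirit of its Section \ref{subsection:commonperp2}. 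The verification that $u_A, u_B\in W^L$ goes through exactly as you outline. One small correction to your closing remark: the ultra-parallel hypothesis is not what makes $(V_A+V_B)^L$ a nonzero space-like subspace --- that is automatic, since $\dim(V_A+V_B)\le 4<5$ and the sum contains a time-like vector; where ultra-parallelism genuinely enters is in guaranteeing the existence of the common perpendicular $g$ in the first place (Theorem \ref{theorem:commonperpgeneral}).
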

\begin{proof}
\par Suppose both $A$ and $B$ are pure hyperbolic with ultra-parallel axes. Let $L$ be the common perpendicular line between the axes of $A$ and $B$. If the axes of $A$ and $B$ lie in a plane $C$, then $L \subset C$ and there are plenty of planes that are orthogonal to $C$ containing $L$. Pick $P$ to be any of those planes. Then $\partial P \in \mathcal{K}_A\cap \mathcal{K}_B$. If the axes of $A$ and $B$ do not lie in the same plane, the axis of $A$ and $L$ still lie in a plane $P_1$. As $L$ and the axis of $B$ intersect in a single point, so does $P_1$ and axis of $B$ and they are contained is a unique hyperplane $C$. There is a unique plane $P$ orthogonal to $C$ through $L$. Since both axes of $A$ and $B$ lie in the hyperplane $C$, $P$ must be orthogonal to both of them. Then $\partial P \in \mathcal{K}_A\cap \mathcal{K}_B$. It follows that there are $P_1,P_2 \in \mathcal{K}_A$ and $P_3,P_4 \in \mathcal{K}_B$ such that $A= H_{P_1}H_{P}=H_{P}H_{P_2} $ and $B=H_{P_3}H_{P}=H_{P}H_{P_4}$.
\end{proof}

\begin{theorem}
Let $A$ and $B$ be pure parabolic isometries of $\mathbb{H}^4$. Then $A$ and $B$ are linked.
\end{theorem}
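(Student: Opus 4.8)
The plan is to prove the two parabolics are linked by exhibiting a common element of their half-turn banks and then invoking Corollary \ref{corollary:punchline}. By Theorem \ref{theorem:secretiffpureparabolic}, every element of $\mathcal{K}_A$ is a circle through the fixed point $v_A$ lying inside some $h \in \mathcal{F}_A$, and likewise every element of $\mathcal{K}_B$ passes through $v_B$. Consequently any common circle must pass through both fixed points, and this observation guides the construction. I would split into two cases according to whether $A$ and $B$ share a fixed point.

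If $v_A = v_B$, I would conjugate it to $\infty$, so that on $\widehat{\mathbb{R}^3}$ both isometries are pure translations $x \mapsto x + a$ and $x \mapsto x + b$ with $a, b \neq 0$. Then $\mathcal{F}_A$ and $\mathcal{F}_B$ are the Euclidean planes orthogonal to $a$ and to $b$ respectively, and by Theorem \ref{theorem:secretiffpureparabolic} the set $\mathcal{K}_A$ consists of the Euclidean lines orthogonal to $a$ while $\mathcal{K}_B$ consists of the Euclidean lines orthogonal to $b$. Since $\mathrm{span}\{a,b\}$ has dimension at most $2$ in $\mathbb{R}^3$, there is a nonzero vector $d$ orthogonal to both $a$ and $b$; any Euclidean line in direction $d$ then lies in $\mathcal{K}_A \cap \mathcal{K}_B$.

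If $v_A \neq v_B$, I would conjugate so that $v_A = \infty$ and $v_B = 0$. As above $A$ becomes $x \mapsto x + a$, so $\mathcal{F}_A$ is the family of planes orthogonal to $a$ and $\mathcal{K}_A$ is the family of lines orthogonal to $a$. For $B$, conjugating its fixed point from $\infty$ to $0$ turns $\mathcal{F}_B$ into a family of spheres through $0$ that are mutually tangent there, together with the single genuine plane $\Pi_0$ through $0$ orthogonal to the direction $b'$ of $B$ at $0$. A common element $c$ must, being in $\mathcal{K}_A$, pass through $v_A = \infty$ and hence be a Euclidean line, and must, being in $\mathcal{K}_B$, pass through $v_B = 0$ and lie in some member of $\mathcal{F}_B$; as a full line it can only lie in the unique planar member $\Pi_0$, which forces its direction to be orthogonal to $b'$, while membership in $\mathcal{K}_A$ forces it orthogonal to $a$. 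A dimension count in $\mathbb{R}^3$ again produces a nonzero $d$ with $d \perp a$ and $d \perp b'$, and the line through $0$ in direction $d$ is the desired common element of $\mathcal{K}_A \cap \mathcal{K}_B$.

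I expect the main obstacle to be the distinct-fixed-point case: one must correctly track how the permuted pencil $\mathcal{F}_B$ transforms when its fixed point is moved from $\infty$ to a finite point, so that exactly one of its members remains a genuine plane, and then verify that the forced common circle is a straight line meeting both fixed points simultaneously. Once this is set up, the existence of the common plane reduces to the elementary fact that a subspace spanned by two vectors in $\mathbb{R}^3$ has nontrivial orthogonal complement, which never fails. Finally I would conclude by Corollary \ref{corollary:punchline} that $A$ and $B$ are linked by half-turns, and hence linked.
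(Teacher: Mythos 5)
Your proposal is correct and follows essentially the same route as the paper: in both cases you produce a common element of $\mathcal{K}_A\cap\mathcal{K}_B$, namely a circle through both fixed points lying inside members of both permuted pencils, and the line you obtain from the orthogonality/dimension count is exactly the paper's intersection $h_A\cap h_B$ of the pencil elements through the opposite fixed points. Your normalized-coordinate treatment merely handles the paper's subcases (equal pencil elements, intersecting in a circle, or tangent/parallel) uniformly via the nontrivial orthogonal complement of $\mathrm{span}\{a,b\}$ in $\mathbb{R}^3$.
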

\begin{proof}
\par Suppose their fixed points $x_A$ and $x_B$ are not equal. Then there is a unique line $L$ connecting $x_A$ to $x_B$. Let $h_A \in \mathcal{F}_A$ be the element containing $x_B$ and $h_B \in \mathcal{F}_B$ the element containing $x_A$. Then $h_B\cap h_A$ contains the line $L$. Either $h_A\cap h_B$ is a circle or $h_A=h_B$. Let $P$ be a circle in $h_A\cap h_B$ through $x_A$ and $x_B$. Thus $P$ is a circle in different or same spheres in $\mathcal{F}_A$ and $\mathcal{F}_B$. Then $P\in \mathcal{K}_A \cap \mathcal{K}_B$. Hence, there are $P_1,P_2 \in \mathcal{K}_A$ and $P_3,P_4 \in \mathcal{K}_B$ such that $A= H_{P_1}H_{P}=H_{P}H_{P_2} $ and $B=H_{P_3}H_{P}=H_{P}H_{P_4}$.
\par If $x_A=x_B$, there are still $h_A\in \mathcal{F}_A$ and $h_B\in \mathcal{F}_B$ that have three possibilities: $h_A = h_B$; $P=h_A \cap h_B$ is a circle; or $h_A \cap h_B =\{x_A\}$. In the first case, let $t\in \mathcal{T}_A$. Then $h_B \cap t \in \mathcal{K}_A \cap \mathcal{K}_B$ and so $A$ and $B$ are linked. In the second case, $P \in \mathcal{K}_A \cap \mathcal{K}_B $ and links $A$ and $B$. In the last case, let $ \mathcal{F}_A = \mathcal{F}_B$ which implies that $ \mathcal{K}_B= \mathcal{K}_A$ whose elements make $A$ and $B$ linked.
\end{proof}

\begin{theorem}
Let $A$ and $B$ be isometries of $\mathbb{H}^4$. Suppose $A$ is pure hyperbolic, and $B$ is pure parabolic and $\mathrm{fix}(A)\cap \mathrm{fix}(B)=\emptyset$. Then $A$ and $B$ are linked.
\end{theorem}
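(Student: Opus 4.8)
The plan is to invoke Corollary \ref{corollary:punchline}, which reduces linking to producing a single circle lying in both half-turn banks $\mathcal{K}_A$ and $\mathcal{K}_B$. So I would first set up convenient coordinates: since $B$ is pure parabolic, conjugate its fixed point $v$ to $\infty\in\widehat{\mathbb{R}^3}$ so that $B$ is the pure translation $x\mapsto x+b$, and after a further Euclidean rotation and dilation take $b=(0,0,1)$. Then $\mathcal{F}_B$ is the family of horizontal Euclidean planes $\{x_3=c\}$ (with $\infty$ adjoined), and by Theorem \ref{theorem:secretiffpureparabolic} the elements of $\mathcal{K}_B$ are exactly the horizontal Euclidean lines (those contained in some $\{x_3=c\}$), each completed by $\infty$; equivalently, the vertical geodesic planes of $\mathbb{H}^4$ standing over such lines. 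The hypothesis $\mathrm{fix}(A)\cap\mathrm{fix}(B)=\emptyset$ says $v=\infty$ is not an endpoint of the axis $L$ of $A$, so both endpoints $p,q$ of $L$ are finite; hence $L$ is a genuine geodesic semicircle, lying in the vertical $2$-plane spanned by $\overline{pq}$ and the vertical direction, and it has a well-defined highest point.

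Next I would locate the common circle at the apex of this semicircle. Write $m=\tfrac12(p+q)$ and $r=\tfrac12|p-q|>0$; the apex is $x^*=(m,r)\in\mathbb{H}^4$, and the Euclidean tangent to $L$ there is horizontal, in the direction $\hat u=(q-p)/|q-p|\in\mathbb{R}^3$. I then seek a horizontal Euclidean line $\ell$ through $m$ inside $\{x_3=m_3\}$ whose direction $w$ (necessarily in the $x_1x_2$-plane) is orthogonal to $\hat u$. The vertical plane $P_\ell$ standing over $\ell$ contains $x^*$, and its Euclidean tangent space there is $\mathrm{span}(w,e_t)$. Since $\hat u$ has vanishing vertical component, $\hat u\perp e_t$ automatically, so $P_\ell$ is Euclidean-orthogonal to $L$ at $x^*$ exactly when $\hat u\cdot w=0$; because the hyperbolic metric is conformal to the Euclidean one, this Euclidean orthogonality is hyperbolic orthogonality. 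By Theorem \ref{theorem:secretiffpurehyperbolic} this places $\partial P_\ell\in\mathcal{K}_A$, while $\partial P_\ell=\ell\cup\{\infty\}$ lies in $\{x_3=m_3\}\in\mathcal{F}_B$ and contains $v=\infty$, so Theorem \ref{theorem:secretiffpureparabolic} places it in $\mathcal{K}_B$.

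The one point requiring care, and the main obstacle, is producing the direction $w$. One needs a nonzero $w\in\mathrm{span}(e_1,e_2)$ satisfying the single linear equation $u_1w_1+u_2w_2=0$. When $(u_1,u_2)\neq(0,0)$ this is solved by $w=(-u_2,u_1,0)$; when $(u_1,u_2)=(0,0)$, that is, when $\overline{pq}$ is vertical and parallel to $b$, the equation is vacuous and any horizontal $w$ works. Thus a valid $\ell$ exists in every case. The construction never degenerates precisely because $r>0$ (from $p\neq q$) keeps $x^*$ in the interior of $\mathbb{H}^4$, while the finiteness of $p,q$ (from $\mathrm{fix}(A)\cap\mathrm{fix}(B)=\emptyset$) guarantees the apex exists at all. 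I would then conclude by citing Corollary \ref{corollary:punchline} to pass from the nonempty intersection $\mathcal{K}_A\cap\mathcal{K}_B$ to the linking of the pair.
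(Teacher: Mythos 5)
Your proof is correct and is essentially the paper's argument carried out in normalized coordinates: the vertical plane over your line $\ell$ is precisely the intersection of the hyperplane bounded by the element of $\mathcal{F}_A$ through $v$ with the hyperplane bounded by the element of $\mathcal{F}_B$ through its foot on the axis, which is exactly the common element of $\mathcal{K}_A\cap\mathcal{K}_B$ that the paper constructs synthetically. Your degenerate case $(u_1,u_2)=(0,0)$ corresponds to the paper's case $h_x=h_a$, where one may choose any suitable plane inside the coinciding hyperplanes.
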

\begin{proof}
\par Let $v$ be the fixed point of $B$ and $L$ be the axis of $A$.
Then there is an $h_x\in \mathcal{F}_A$ containing $v$.
If $\hat h$ is the hyperplane bounded by $h_x$, there is
a unique point $a \in \hat h\cap L$. Then there is a unique
$h_a \in \mathcal{F}_B$ which bounds a hyperplane that
contains $a$. Since the hyperplanes bounded by $h_x$ and $h_a$
intersect in $a$, their intersection is at least a plane
orthogonal to $L$ and bounded by $v$. Otherwise, $h_x = h_a$
and one can pick a plane $P \subseteq \hat h_x \cap \hat h_a$
bounded by $v$ and passes through $a$. Then $\partial P\in
\mathcal{K}_A \cap \mathcal{K}_B $ so $A$ and $B$ are linked.
\end{proof}

Let $A$ be a pure parabolic isometry of $\mathbb{H}^4$ and $B$ a pure loxodromic isometry of $\mathbb{H}^4$. Suppose the fixed points of $A$ and $B$ in $\widehat{\mathbb{R}^3}$ are disjoint. The following are conditions for $A$ and $B$ to be linked.
\begin{theorem}[Condition 1]
Suppose there is an $h\in \mathcal{F}_A\cap \mathcal{F}_B$. Then $A$ and $B$ are linked.
\end{theorem}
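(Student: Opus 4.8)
The plan is to produce a single circle $c$ lying in $\mathcal{K}_A \cap \mathcal{K}_B$; by Corollary \ref{corollary:punchline} this is exactly what is needed to conclude that $A$ and $B$ are linked. Write $L$ for the axis of $B$ and $P$ for its twisting plane, so that $L \subset P$. From $h \in \mathcal{F}_B$ I would extract the point $x \in L$ and the hyperplane $\hat h = h_x$, the orthogonal complement of $L$ at $x$, with $\partial \hat h = h$. From $h \in \mathcal{F}_A$, together with the fact that every element of the permuted pencil of a pure parabolic passes through its fixed point, I get $v_A \in h$, where $v_A$ is the fixed point of $A$. Set $\ell_0 = P \cap \hat h$; since $L \subset P$ and $L$ is orthogonal to $\hat h$ at $x \in L$, this intersection is a single geodesic line through $x$, contained in $P$ and perpendicular to $L$. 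Let $\{p_1, p_2\} = \partial \ell_0 \subset h$.

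Next I would study the planes inside $\hat h$ that contain $\ell_0$. Working in the tangent space at $x$, I would choose an orthonormal frame $e_1, \dots, e_4$ with $e_1$ tangent to $L$, $P = \mathrm{span}\{e_1, e_2\}$, and $\hat h = \mathrm{span}\{e_2, e_3, e_4\}$, so that $\ell_0$ has direction $e_2$. Every plane $Q \subset \hat h$ containing $\ell_0$ then has the form $\mathrm{span}\{e_2, w\}$ with $w \in \mathrm{span}\{e_3, e_4\}$; such a $Q$ meets $P$ exactly in $\ell_0$, is orthogonal to $P$ through the line $\ell_0$ (since its direction transverse to $\ell_0$ lies in $\mathrm{span}\{e_3,e_4\}$, which is orthogonal to $e_1$), and is orthogonal to $L$ (since $Q \subset \hat h \perp L$ and $x \in Q \cap L$). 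By Theorem \ref{theorem:secretiffpureloxodromic}, each such $\partial Q$ lies in $\mathcal{K}_B$. Since $Q \subset \hat h$, each boundary circle $\partial Q$ lies in $h$ and passes through $p_1, p_2$; conversely, inside the hyperbolic $3$-space $\hat h$ the geodesic planes containing $\ell_0$ correspond bijectively to the circles on $h = \partial \hat h$ through the endpoints $p_1, p_2$, so these boundaries sweep out \emph{all} such circles.

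Finally I would take $c$ to be a circle on $h$ through $p_1$, $p_2$ and $v_A$: this exists because $v_A \in h$ (if $v_A \notin \{p_1, p_2\}$ it is the unique circle through the three points, and if $v_A \in \{p_1, p_2\}$ any member of the family works). By the previous step $c = \partial Q$ for a plane $Q \subset \hat h$ orthogonal to both $P$ and $L$ through the line $\ell_0 \subset P$, so $c \in \mathcal{K}_B$ by Theorem \ref{theorem:secretiffpureloxodromic}. On the other hand $c \subset h \in \mathcal{F}_A$ and $v_A \in c$, so $c \in \mathcal{K}_A$ by Theorem \ref{theorem:secretiffpureparabolic}. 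Hence $c \in \mathcal{K}_A \cap \mathcal{K}_B$, and $A$ and $B$ are linked. The main work is the verification in the second paragraph that every plane of $\hat h$ through $\ell_0$ is automatically orthogonal to both $P$ and $L$ and that these boundaries exhaust the circles through $p_1, p_2$; once that geometric bookkeeping is in place, matching the parabolic constraint (passing through $v_A$) against the loxodromic pencil of circles through $p_1, p_2$ is immediate, the only point to watch being the degenerate case $v_A \in \{p_1, p_2\}$.
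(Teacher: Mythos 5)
Your proposal is correct and follows essentially the same route as the paper: both construct the circle in $h$ through the fixed point of $A$ and the two points where $h$ meets the boundary of the twisting plane of $B$ (your $p_1,p_2$ are the paper's $y,z$), then verify membership in $\mathcal{K}_B$ via orthogonality to the twisting plane and axis and in $\mathcal{K}_A$ via Theorem \ref{theorem:secretiffpureparabolic}. Your treatment of the degenerate case $v_A\in\{p_1,p_2\}$ (take any circle of the family) is a slight, equally valid variant of the paper's handling.
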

\begin{proof}
Let $L$ be the boundary of the twisting plane of $B$, and let $x$ be the fixed point $A$. Since $L\in \mathcal{D}_B$, $h$ intersects $L$ in two points $y$ and $z$. If $x$ is equal to either $y$ or $z$, then $L$ is the unique element of $\mathcal{D}_B$ containing $x$. It allows any plane or sphere $t$ in $\widehat{\mathbb{R}^3}$ containing $L$ to be an element of both $\mathcal{T}_A$ and $\mathcal{R}_B$. Hence, $h\cap t$ is an element of both $\mathcal{K}_A$ and $\mathcal{K}_B$ that links $A$ and $B$. If $x$, $y$, and $z$ are three distinct points, they form a unique circle $L_2$ that must be a subset of $h$. Then $L_2$ is perpendicular to $L$ so it is in $\mathcal{K}_B$. It also passes through $x$ so $L_2 \in \mathcal{K}_A$. Thus $L_2 \in \mathcal{K}_A \cap \mathcal{K}_B$ so $A$ and $B$ are linked.
\end{proof}

\begin{theorem}[Condition 2]
If the fixed point of $A$ is in the boundary of the twisting plane of $B$, then $A$ and $B$ are linked.
\end{theorem}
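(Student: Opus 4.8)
The plan is to conjugate so that the fixed point $x$ of $A$ is sent to $\infty$, and then to produce a single circle lying in both half-turn banks; by Corollary \ref{corollary:punchline} the existence of such a circle is equivalent to $A$ and $B$ being linked. The two tools are the geometric descriptions of the two banks. Theorem \ref{theorem:secretiffpureparabolic} says that $c\in\mathcal{K}_A$ exactly when $c$ is a circle through $x$ contained in some element of $\mathcal{F}_A$, and in the model where $x=\infty$ the elements of $\mathcal{F}_A$ are the Euclidean planes orthogonal to the direction $b$ of $A$. Theorem \ref{theorem:secretiffpureloxodromic} says that $\partial Q\in\mathcal{K}_B$ exactly when $Q$ is a plane orthogonal to both the twisting plane $P$ and the axis $L$ of $B$ and meeting $P$ in a line.

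With $x=\infty$, the hypothesis $x\in\partial P$ makes $\partial P$ a Euclidean line in $\mathbb{R}^3$, and since $P$ contains $L$ this line passes through both endpoints of the axis. First I would drop, inside the hyperbolic plane $P$, the perpendicular $\ell$ from $x$ to $L$. Because $x$ is not an endpoint of $L$ (the fixed points are disjoint), this perpendicular exists and is unique; in the model it is the vertical geodesic over the point $m_0\in\partial P$ midway between the endpoints of $L$, so $\partial\ell=\{m_0,\infty\}$. Next I would consider the planes $Q$ that are orthogonal to $P$ along $\ell$. The key point is that these form a one-parameter family rather than a single plane: each such $Q$ is the vertical half-plane over a Euclidean line through $m_0$ whose direction $d$ satisfies $d\perp u$, where $u$ is the direction of $\partial P$. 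Since $\ell$ meets $L$ orthogonally and $L\subset P$, every such $Q$ is automatically orthogonal to $L$ as well, and $Q\cap P=\ell$ is a line, so by Theorem \ref{theorem:secretiffpureloxodromic} each of these circles $\partial Q$ already lies in $\mathcal{K}_B$.

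It then remains to select the member of this family whose boundary also lies in $\mathcal{K}_A$. The circle $\partial Q$ is a Euclidean line, so it automatically passes through $x=\infty$; to land in $\mathcal{K}_A$ it must in addition lie in a Euclidean plane orthogonal to $b$, which amounts to $d\perp b$. Thus I need a nonzero vector $d$ with $d\perp u$ and $d\perp b$. The two orthogonal complements are $2$-dimensional subspaces of $\mathbb{R}^3$, so they meet in a subspace of dimension at least $2+2-3=1$, and a suitable $d$ exists (uniquely up to scale when $u\not\parallel b$, and with a full plane of choices when $u\parallel b$). For this choice $\partial Q\in\mathcal{K}_A\cap\mathcal{K}_B$, and the theorem follows.

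The hard part is not the existence of the perpendicular $\ell$, which is a routine two-dimensional fact, but recognizing that the planes orthogonal to $P$ through $\ell$ carry one genuine degree of freedom; it is precisely this freedom, matched against the dimension count in $\mathbb{R}^3$, that lets the loxodromic and parabolic constraints be met at once. The remaining verifications are routine: that $Q\cap P=\ell$ for the chosen $Q$, that $\partial Q\neq\partial P$, and that orthogonality of $Q$ to both $P$ and $L$ holds for every direction $d$ perpendicular to $u$.
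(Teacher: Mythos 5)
Your proof is correct, and it in fact produces the very same circle that the paper uses: the paper takes the unique $h_x\in\mathcal{F}_B$ through $x$ (in your normalization, the perpendicular bisector plane of the segment joining the two fixed points of $B$), lets $m$ be the second point where $h_x$ meets the boundary of the twisting plane (your $m_0$), takes the unique $h_m\in\mathcal{F}_A$ through $m$ (the Euclidean plane through $m_0$ orthogonal to $b$), and sets $c=h_m\cap h_x$ --- which is exactly your Euclidean line through $m_0$ in a direction $d\in u^\perp\cap b^\perp$. The difference is in how the circle is found and certified. The paper stays coordinate-free, obtains $c$ as an intersection of two pencil elements, reads off membership in the two banks from the definitions, and defers the degenerate case $h_m=h_x$ to Condition 1; you work in the upper half-space model with $x=\infty$, build the one-parameter family of planes orthogonal to the twisting plane along the perpendicular $\ell$ from $x$ to the axis, and select the right member by the dimension count $\dim\bigl(u^\perp\cap b^\perp\bigr)\ge 1$, verifying membership via Theorems \ref{theorem:secretiffpureparabolic} and \ref{theorem:secretiffpureloxodromic}. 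Your route handles the coincidence $u\parallel b$ in-line rather than by reduction to Condition 1, and it makes explicit the orthogonality of $Q$ to the axis of $B$ (via the observation that $Q\perp P$ along $\ell$ together with $\ell\perp L$ forces $Q\perp L$), a verification the paper's proof leaves implicit; the paper's version is shorter and dovetails with the surrounding list of conditions, but both arguments are sound.
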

\begin{proof}
Let $x$ be the fixed point of $A$. Since $x$ is not the fixed point of $B$, there is a unique $h_x \in \mathcal{F}_B$ containing $x$. Let $L$ be the boundary of the twisting plane of $B$. Then $L \in \mathcal{D}_B$ intersects $h$ in two points, one of which is $x$. Let $m$ be the other intersection point. There is also a unique $h_m \in \mathcal{F}_A$ containing $m$. Either $h_m=h_x$ which is done in the first condition, or $h_m\cap h_x$ is a circle $c$ since both $h_m$ and $h_x$ contain $x$ and $m$. We must show $c\in \mathcal{K}_A \cap \mathcal{K}_B$. The intersection points of $c$ and $L$ are $x$ and $m$, but $c\subset h_x \in \mathcal{F}_B$ so $c\in \mathcal{K}_B$. As $A$ is pure parabolic, any circle inside any element of $\mathcal{F}_A$ and passing through $x$ is an element of $\mathcal{K}_A$. Hence, $c\in \mathcal{K}_A$ as $x \in c \subset h_m \in \mathcal{F}_A$ and $x\in c$. Then other planes can be found so that $A$ and $B$ are linked.
\end{proof}

\begin{theorem}[Condition 3] Let $L \subset \widehat{\mathbb{R}^3}$ be the boundary of the twisting plane of $B$. If three points in $L$ form a subset of $h$ for some $h\in \mathcal{F}_A$, then $A$ and $B$ are linked.
\end{theorem}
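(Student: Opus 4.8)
The plan is to produce a single circle in $\mathcal{K}_A\cap\mathcal{K}_B$ and then quote Corollary \ref{corollary:punchline}. Write $P$ for the twisting plane of $B$, so that $L=\partial P$, let $x$ denote the fixed point of $A$, and let $h\in\mathcal{F}_A$ be the element containing the three prescribed points of $L$. I would first upgrade the hypothesis to the statement $L\subset h$: a sphere that meets a circle in three distinct points must contain that entire circle, since $h$ cuts the affine plane of $L$ in a circle or line that shares three points with $L$, and two such curves in a common plane coincide once they share three points. Hence the three points of $L$ lying on $h$ force $L\subset h$.

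Two facts then fall out at once. First, $h$ is a sphere containing $\partial P=L$, so by the third listed property of the twisting pencil of a loxodromic isometry, $h\in\mathcal{R}_B$. Second, because $A$ is pure parabolic with fixed point $x$, every element of $\mathcal{F}_A$ passes through $x$; in particular $x\in h$. Thus the single sphere $h$ lies in $\mathcal{F}_A\cap\mathcal{R}_B$ and also contains $x$. The common circle is now essentially forced. Since $x$ is not a fixed point of $B$, there is an element $s\in\mathcal{F}_B$ with $x\in s$. By the first listed property of the twisting pencil every element of $\mathcal{R}_B$ is orthogonal to every element of $\mathcal{F}_B$, so $h$ and $s$ meet orthogonally and hence cut out a genuine circle $c:=s\cap h$, which contains $x$ because $x\in s$ and $x\in h$. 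On the one hand $c=s\cap h$ with $s\in\mathcal{F}_B$ and $h\in\mathcal{R}_B$, so $c\in\mathcal{K}_B$ by definition; on the other hand $x\in c\subset h$ with $h\in\mathcal{F}_A$, so $c\in\mathcal{K}_A$ by Theorem \ref{theorem:secretiffpureparabolic}. Therefore $c\in\mathcal{K}_A\cap\mathcal{K}_B$, the pair is linked, and explicit involutions may be extracted from Theorem \ref{theorem:revisedkaran}.

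The real content is the observation that $h$ serves double duty, belonging simultaneously to $\mathcal{F}_A$ and to $\mathcal{R}_B$, which collapses this case to the mechanism already used for Condition 1. Consequently I expect no genuine obstacle; the only points requiring care are the elementary reduction from three points of $L$ to $L\subset h$, and the verification that $c$ is a bona fide circle through $x$ rather than a single tangency point, which is exactly what the orthogonality of $\mathcal{F}_B$ and $\mathcal{R}_B$ secures. I would also note that the argument never uses whether $x$ itself lies on $L$, so it gracefully absorbs any overlap with Condition 2.
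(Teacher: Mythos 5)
Your proof is correct, and it follows the same overall strategy as the paper's --- upgrade the three-point hypothesis to $L\subset h$ and then exhibit a circle through $x$ lying inside $h$ that belongs to both half-turn banks --- but the mechanism you use to produce that circle is genuinely cleaner. The paper starts from the element $c_x\in\mathcal{K}_B$ through $x$, which forces it to assume $x\notin L$ (deferring the case $x\in L$ to Condition 2, where such an element is no longer unique), and then runs the three-point argument a second time to show $c_x\subset h$. You instead observe that $L\subset h$ makes $h$ itself an element of $\mathcal{R}_B$, so that $h\in\mathcal{F}_A\cap\mathcal{R}_B$; intersecting $h$ with the unique sphere of $\mathcal{F}_B$ through $x$ (unique because $x$ is not a fixed point of $B$) then yields a circle that is in $\mathcal{K}_B$ by definition and in $\mathcal{K}_A$ by Theorem \ref{theorem:secretiffpureparabolic}. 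This buys you a uniform argument with no case split on whether $x\in L$, no appeal to Condition 2, and only one use of the three-points-determine-a-circle fact; when $x\notin L$ your circle coincides with the paper's $c_x$, since distinct elements of $\mathcal{R}_B$ meet only along $L$ and hence $h$ is the unique member of $\mathcal{R}_B$ through $x$. The one point that does require explicit verification is that $s\cap h$ is a genuine circle rather than a tangency, and you correctly extract this from the orthogonality of $\mathcal{F}_B$ and $\mathcal{R}_B$.
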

\begin{proof}
The three points determine $L$ and are sufficient for $L$ to fit inside $h$. Let $x \in \widehat{\mathbb{R}^3}$ be the fixed point of $A$. The case where $x\in L$ is is handled in the previous condition. We may assume $x\notin L$. Then there is a unique $c_x \in \mathcal{K}_B$ containing $x$. We must show $c_x \in \mathcal{K}_A$. As $c_x \in \mathcal{K}_B$, it must intersect $L$ in two points which are in $h$. But $h$ contains $x$ since $A$ is parabolic, so it has three points in common with $c_x$. They are sufficient for $c_x$ to be a subset of $h$. A circle or line in $h$ containing $x$ is an intersection of $h$ with some element of $\mathcal{T}_A$. Hence $c_x$ having this property, must be in $\mathcal{K}_A$. This implies that $A$ and $B$ are linked.
\end{proof}

Let $A$ be a screw parabolic and $B$ a be pure hyperbolic isometry of $\mathbb{H}^4$. Assume both have disjoint fixed points in the boundary. Throughout this case, let $x$ be the fixed point of $A$ and let $L$ be the boundary of its twisting plane. Likewise, let $d_x$ be the unique element of $\mathcal{D}_B$ containing $x$.
\begin{theorem}[Condition 1] If there is an $h\in \mathcal{F}_A \cap \mathcal{F}_B$, then $A$ and $B$ are linked.
\end{theorem}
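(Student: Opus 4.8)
The plan is to produce a single circle lying in both $\mathcal{K}_A$ and $\mathcal{K}_B$ and then invoke Corollary \ref{corollary:punchline}. I will work in the boundary model $\widehat{\mathbb{R}^3}$ after conjugating so that the axis of $B$ is the vertical geodesic from $0$ to $\infty$. In these coordinates $\mathcal{F}_B$ is the family of Euclidean spheres centered at the origin, $\mathcal{T}_B$ the Euclidean planes through the origin, $\mathcal{D}_B$ the Euclidean lines through the origin, and hence $\mathcal{K}_B$ the circles centered at the origin (the great circles of the spheres in $\mathcal{F}_B$). Every element of $\mathcal{F}_A$ contains the fixed point $x$ of $A$, so the common sphere $h \in \mathcal{F}_A \cap \mathcal{F}_B$ is centered at the origin with radius $|x|$, and the line $d_x$ (the unique element of $\mathcal{D}_B$ through $x$) is a diameter of $h$, meeting it in $x$ and the antipode $x'$.

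First I would locate $L$, the boundary of the twisting plane of $A$, relative to $h$. Since $h \in \mathcal{F}_A$, property (1) of the twisting pencil makes $h$ orthogonal to every sphere of $\mathcal{R}_A$, that is, to every sphere containing $L$. The tangent planes at $x$ of all spheres through $L$ contain the tangent line of $L$ and intersect precisely in it, so the requirement that each also contain the radial (normal) direction of $h$ at $x$ forces that radial direction to be the tangent of $L$. Thus $L$ is tangent to $d_x$ at $x$. In particular its tangent at $x$ is normal to $h$, so $L \not\subset h$ and $L$ crosses $h$ transversally at $x$; being a circle, it must cross back, giving a second point $w \in L \cap h$ with $w \neq x$ (note $|w| = |x|$).

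The circle I will use is the great circle $c = h \cap \Pi$, where $\Pi \in \mathcal{T}_B$ is the plane through the origin, $x$ and $w$ (any plane through the origin and $x$ in the degenerate case $w = x'$). By construction $c$ is centered at the origin, so $c = h \cap \Pi$ with $h \in \mathcal{F}_B$ and $\Pi \in \mathcal{T}_B$, whence $c \in \mathcal{K}_B$; equivalently $c$ bounds a plane orthogonal to the axis of $B$ by Theorem \ref{theorem:secretiffpurehyperbolic}. On the other hand $c$ passes through the two points $x,w$ of $L$, and at $x$ its tangent is perpendicular to the radial direction $d_x$, hence to the tangent of $L$. Since two circles meeting in two points and orthogonal at one are orthogonal at both, we get $c \perp L$ together with $x \in c$. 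By Theorem \ref{theorem:secretiffscrewparabolic} this is exactly the statement that $c$ bounds a plane orthogonal to the twisting plane of $A$ through a geodesic ending at $x$, so $c \in \mathcal{K}_A$. Therefore $c \in \mathcal{K}_A \cap \mathcal{K}_B$ and the pair is linked.

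The main obstacle I anticipate is the middle step: locating $L$ and guaranteeing the second intersection $w$. This is exactly where the hypothesis $h \in \mathcal{F}_A \cap \mathcal{F}_B$ is consumed, since it is equivalent to the tangency of $L$ and $d_x$ at $x$; one must check that the crossing at $x$ is transversal (so that $w$ exists) and that the degenerate case $w = x'$ still produces a genuine great circle. The concluding orthogonality bookkeeping — orthogonality at one intersection point propagating to the other — is the Möbius-geometric fact that makes membership $c \in \mathcal{K}_A$ immediate via Theorem \ref{theorem:secretiffscrewparabolic}.
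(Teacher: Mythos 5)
Your proof is correct and, after unwinding the normalization, is essentially the paper's own argument: since the second intersection point of $d_x$ with $h$ is the antipode $-x$ in your coordinates, your great circle of $h$ through $x$ and $w$ is exactly the paper's circle $L_2$ determined by $x$, $a=w$, and $b=-x$, and your appeals to Theorems \ref{theorem:secretiffpurehyperbolic} and \ref{theorem:secretiffscrewparabolic} play the same role as the paper's direct use of the pencil properties. The only genuine addition is your tangency/transversality argument establishing the existence of the second point $w\in L\cap h$ (and the explicit treatment of the degenerate case $w=-x$), which the paper instead obtains from the fact that $L\in\mathcal{D}_A$ meets each element of $\mathcal{F}_A$ in exactly two points, one of them $x$.
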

\begin{proof}
Let $a$ and $b$ be points in $\widehat{\mathbb{R}^3}$ so that $d_x\cap h=\{x,b\}$ and $L\cap h= \{x,a\}$. If $a=b$, then $d_x=L$ so any sphere $c$ containing $L$ is an element of both $\mathcal{T}_B$ and $\mathcal{R}_A$. It follows that $c\cap h$ is an element of both $\mathcal{K}_A$ and $\mathcal{K}_B$. If $a\neq b$, the points $a$, $b$, and $x$ form a unique circle $L_2$ in $\widehat{\mathbb{R}^3}$. Since $a, b, x \in h$, $L_2\subset h$. We claim that $L_2 \in \mathcal{K}_A \cap \mathcal{K}_B$. First, $L_2$ intersects $L$ in two points, $a$ and $x$, while sitting in $h \in \mathcal{F}_A$. Hence, $L_2\in \mathcal{K}_A$. Next, $L_2$ intersects $d_x$ in two points, $b$ and $x$ while sitting in $h\in \mathcal{F}_B$. Thus, $L_2 \in \mathcal{K}_B$. The half-turn around $L_2$ is the desired common factor of $A$ and $B$ linking them.
\end{proof}

\begin{theorem} [Condition 2] If $L \subset h$ for some $h \in \mathcal{F}_B$, then $A$ and $B$ are linked.
\end{theorem}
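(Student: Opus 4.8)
The goal is to produce a single circle lying in $\mathcal{K}_A\cap\mathcal{K}_B$ and then invoke Corollary \ref{corollary:punchline}. The hypothesis supplies a sphere $h\in\mathcal{F}_B$ with $L\subset h$, where $L=\partial P_A$ and $P_A$ is the twisting plane of $A$. First I would unpack this geometrically: let $\hat h$ be the hyperplane bounded by $h$. Since $h\in\mathcal{F}_B$, the hyperplane $\hat h$ is the orthogonal complement of the axis $\ell_B$ of $B$, meeting $\ell_B$ perpendicularly in a single point $o\in\ell_B$. Because $L=\partial P_A\subset h=\partial\hat h$ and a plane is determined by its boundary circle, the twisting plane $P_A$ lies inside $\hat h$. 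Thus the entire configuration collapses into the totally geodesic copy of $\mathbb{H}^3$ given by $\hat h$, which contains both $P_A$ and the point $o$; note also $x\in L\subset\partial\hat h$.

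Working inside $\hat h$, I would construct the desired plane $Q$ explicitly. Let $o'$ be the orthogonal projection of $o$ onto $P_A$ (with $o'=o$ in the degenerate case $o\in P_A$), and let $m$ be the geodesic of $P_A$ through $o'$ having $x$ as one endpoint. Let $Q$ be the unique plane orthogonal to $P_A$ along $m$, that is, the union of the perpendiculars to $P_A$ at the points of $m$. Then $Q\cap P_A=m$ is a line bounded by $x$ and $Q$ is orthogonal to $P_A$, so Theorem \ref{theorem:secretiffscrewparabolic} gives $\partial Q\in\mathcal{K}_A$.

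It then remains to check $\partial Q\in\mathcal{K}_B$. Here the reason for choosing $m$ through $o'$ is that the perpendicular from $o$ to $P_A$ lands at $o'\in m$, hence lies in $Q$; therefore $o\in Q$. Since $Q\subset\hat h$ and $\ell_B$ is perpendicular to all of $\hat h$ at the point $o\in Q$, the plane $Q$ is orthogonal to the axis $\ell_B$ and meets it at $o$. Theorem \ref{theorem:secretiffpurehyperbolic} then yields $\partial Q\in\mathcal{K}_B$. Consequently $\partial Q\in\mathcal{K}_A\cap\mathcal{K}_B$, and Corollary \ref{corollary:punchline} shows that $A$ and $B$ are linked.

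I expect the main obstacle to be the verification that $o\in Q$, namely that the perpendicular from $o$ to $P_A$ genuinely sits inside the orthogonal plane erected over $m$; this is exactly where the choice of $m$ through the foot $o'$ is essential, and where the degenerate case $o\in P_A$ must be absorbed. A secondary point requiring care is confirming that $Q\cap P_A$ really is a full geodesic with endpoint precisely $x$ rather than reducing to a single point, which should follow from $o'$ being an interior point of $P_A$ while $x$ is a boundary point of $P_A$.
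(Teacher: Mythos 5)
Your proof is correct, but it takes a genuinely different route from the paper's. The paper stays on the boundary sphere $\widehat{\mathbb{R}^3}$: it takes $b$ to be the second intersection of $d_x$ with $h$, lets $h_b$ be the element of $\mathcal{F}_A$ through $b$ and $a$ the second intersection of $h_b$ with $L$, and then argues that the circle $L_2$ through $a$, $b$, $x$ equals $h_b\cap h$ and lies in both half-turn banks by checking pencil membership directly (using $h\in\mathcal{F}_B\cap\mathcal{R}_A$ and $h_b\in\mathcal{F}_A\cap\mathcal{T}_B$). You instead work inside $\mathbb{H}^4$: you observe that $L\subset h$ forces the twisting plane $P_A$ to lie in the hyperplane $\hat h$ orthogonal to the axis of $B$ at $o$, drop the perpendicular from $o$ to $P_A$ to get the foot $o'$, and erect the plane $Q$ over the geodesic $m$ of $P_A$ joining $o'$ to $x$. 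Membership in $\mathcal{K}_A$ and $\mathcal{K}_B$ then follows from the geometric characterizations (Theorems \ref{theorem:secretiffscrewparabolic} and \ref{theorem:secretiffpurehyperbolic}), since $Q\perp P_A$ along a line with endpoint $x$, while $Q\subset\hat h$ contains $o$ and is therefore orthogonal to the axis of $B$. The two points you flagged as delicate are exactly the right ones, and both are secured by your construction: $o\in Q$ because the segment from $o$ to $o'$ lies on the perpendicular to $P_A$ erected at $o'\in m$, and $Q\cap P_A=m$ is a full geodesic ending at $x$ because $o'$ is interior and $x\in L=\partial P_A$. What your route buys is that both bank memberships reduce to the already-proven ``alternative geometric definitions,'' avoiding any direct pencil bookkeeping; what the paper's route buys is consistency with the circle-and-pencil computations used throughout Section \ref{section:conditions}, at the cost of having to justify separately that $h_b$ bounds a hyperplane containing the axis of $B$.
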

\begin{proof}
The hypothesis implies that $h$ is the unique element of $\mathcal{F}_B$ containing $x$. Let $b$ be the intersection of $d_x$ and $h$ in $\widehat{\mathbb{R}^3}$ other than $x$. There is a unique $h_b \in \mathcal{F}_A$ containing $b$. Let $a$ be the intersection of $h_b$ and $L$ other than $x$. Connect $a$, $b$ and $x$ with the unique circle $L_2$. Then $L_2 =h_b \cap h$ while $h\in \mathcal{R}_A$ and $h_b\in \mathcal{T}_B$. It also connects $L$ and $d_x$ in two distinct points. Hence, $L_2 \in \mathcal{K}_A \cap \mathcal{K}_B$ so $A$ and $B$ are linked.
\end{proof}

\begin{theorem} [Condition 3]
If either $m\in L$ or $d_x$ is orthogonal to $k_m$, then $A$ and $B$ are linked.
\end{theorem}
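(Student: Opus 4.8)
The plan is to exhibit a single circle belonging to $\mathcal{K}_A \cap \mathcal{K}_B$ and then quote Corollary \ref{corollary:punchline} to conclude that $A$ and $B$ are linked. The two geometric reformulations proved above are the essential tools: by Theorem \ref{theorem:secretiffpurehyperbolic} a circle lies in $\mathcal{K}_B$ exactly when it bounds a plane orthogonal to the axis of $B$, and by Theorem \ref{theorem:secretiffscrewparabolic} a circle lies in $\mathcal{K}_A$ exactly when it bounds a plane orthogonal to the twisting plane of $A$ along a line one of whose endpoints is the fixed point $x$. The circle $k_m$ built in the preceding construction is already an element of $\mathcal{K}_B$, being cut out inside the unique sphere $h_x \in \mathcal{F}_B$ through $x$; hence the entire argument reduces to showing that, under either hypothesis, $k_m$ (or the companion circle through $x$ and $m$) also lies in $\mathcal{K}_A$.

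I would then split along the stated dichotomy, in each branch verifying the screw-parabolic criterion of Theorem \ref{theorem:secretiffscrewparabolic}. In the branch $m \in L$, the circle through $x$ and $m$ meets $L=\partial P$, the boundary of the twisting plane of $A$, in the two points $x$ and $m$; property (3) of the twisting pencil $\mathcal{R}_A$ says that any sphere containing $\partial P$ lies in $\mathcal{R}_A$, so the sphere through this circle and $L$ belongs to $\mathcal{R}_A$, while the unique permuted sphere of $A$ through $x$ carries the circle, exhibiting it as $s\cap t$ with $s\in\mathcal{F}_A$ and $t\in\mathcal{R}_A$ and placing it in $\mathcal{K}_A$. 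In the branch where $d_x$ is orthogonal to $k_m$, I would instead promote the orthogonality of these two boundary circles to orthogonality of the hyperplanes they span, deduce that the hyperplane carrying $k_m$ is orthogonal to the twisting plane of $A$, and check that the line of intersection is bounded by $x$ because $x\in k_m$; this again lands $k_m$ in $\mathcal{K}_A$.

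The main obstacle is the faithful translation of each hypothesis into the precise incidence demanded by Theorem \ref{theorem:secretiffscrewparabolic}, and in particular certifying that the line $Q\cap P$ cut out in the twisting plane really has the fixed point $x$ on its boundary rather than merely crossing $P$. In the first branch this is the bookkeeping that $x$ is one of the two points of $k_m\cap L$; in the second it is the passage from orthogonality of the circles $d_x$ and $k_m$ to orthogonality of the bounding hyperplanes and thence of the bounded planes, which I would carry out with the same orthogonality relations between $\mathcal{F}$-spheres and $\mathcal{R}$-spheres used in the proof of Theorem \ref{theorem:revisedkaran}, invoking the uniqueness of the common perpendicular from Theorem \ref{theorem:commonperptwo} should a uniqueness step be needed to pin down the relevant plane.
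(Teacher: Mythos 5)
There is a genuine gap, and it comes from swapping which membership is free and which must be proved. In the setup of this condition, $k_m$ is \emph{defined} as the unique element of $\mathcal{K}_A$ containing $m$, so $k_m\in\mathcal{K}_A$ costs nothing; the entire content of the statement is that the hypotheses force a circle into $\mathcal{K}_B$. Your proposal asserts the opposite: that $k_m$ is ``already an element of $\mathcal{K}_B$, being cut out inside the unique sphere $h_x\in\mathcal{F}_B$ through $x$,'' and that the work is to land it in $\mathcal{K}_A$. That assertion fails twice. First, $k_m$ passes through the two points $x$ and $m$ of $h_x$, but a circle through two points of a sphere need not lie on that sphere; the containment $k_m\subset h_x$ is precisely what the hypothesis $d_x\perp k_m$ is there to deliver (all circles orthogonal to $d_x$ at both $x$ and $m$ lie in $h_x$, since $h_x$ is the sphere of $\mathcal{F}_B$ orthogonal to $d_x$ at those points). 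Second, even containment in $h_x$ alone does not give membership in $\mathcal{K}_B$: by Theorem \ref{theorem:secretiffpurehyperbolic} the bounded plane must be orthogonal to the axis of $B$, i.e.\ must pass through the foot of the axis in the hyperplane bounded by $h_x$, and this again comes only from the orthogonality to $d_x$, whose bounded plane contains the axis. Because of this inversion, your second branch argues toward the wrong target and the argument itself does not follow: $d_x$ belongs to the dual pencil of $B$ and carries no information about the twisting plane of $A$, so orthogonality of $k_m$ to $d_x$ cannot be ``promoted'' to orthogonality of the plane bounded by $k_m$ to the twisting plane of $A$.

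For comparison, the paper's proof runs as follows. If $m\in L$ it takes the explicit circle $c=h_m\cap h_x$ (handling separately the degenerate sub-case $d_x=L$, where $h_m=h_x$ and one falls back to Condition 2) and verifies \emph{both} memberships: $c$ sits in $h_m\in\mathcal{F}_A$ and meets $L$ orthogonally in $\{x,m\}$, so $c\in\mathcal{K}_A$; it sits in $h_x\in\mathcal{F}_B$ and meets $d_x$ orthogonally in $\{x,m\}$, so $c\in\mathcal{K}_B$. If $m\notin L$, it uses $d_x\perp k_m$ to conclude $k_m\subset h_x$ and hence $k_m\in\mathcal{K}_B$, with $k_m\in\mathcal{K}_A$ already given. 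Two further slips in your sketch: ``the unique permuted sphere of $A$ through $x$'' is ill-defined, since every element of $\mathcal{F}_A$ contains the parabolic fixed point $x$ (uniqueness holds only for points other than $x$, e.g.\ for $m$); and in the first branch ``the circle through $x$ and $m$'' is not a single circle until you pin it down as an intersection of the relevant spheres.
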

The definitions of $m$ and $k_m$ are as follows. There is a unique element $h_x \in \mathcal{F}_B$ containing $x$, and it is orthogonal to $d_x$ through two intersection points, one of which is $x$. Let $m$ be the other intersection point. If $m\notin L$ there are unique elements $h_m\in \mathcal{F}_A$ and $k_m\in \mathcal{K}_A$ containing $m$.
\begin{proof}
Suppose $m\in L$. It is possible that $d_x=L$ which implies that $h_m=h_x$ reducing the case to the previous condition. If $d_x\neq L$, $h_m\cap h_x$ is a circle $c$ containing $m$ and $x$. It follows that $c\subset \mathcal{K}_A \cap \mathcal{K}_B$ since it intersects $d_x$ and $L$ orthogonally through two points. Hence, $A$ and $B$ are linked.
\par If $m\notin L$, the hypothesis requires that $d_x$ is orthogonal to $k_m$. It follows that $k_m\subset h_x$ since all circles orthogonal to $d_x$ at $x$ and $m$ are contained in $h_x$. Therefore, $k_m \in \mathcal{K}_B$ so $A$ and $B$ are linked.
\end{proof}

\begin{theorem}[Computational Condition] Let $A$ be a screw parabolic isometry of $\mathbb{H}^4$ leaving the $z$-axis of $\widehat{\mathbb{R}^3}$ invariant and fixing $\infty$. Let $B$ be a pure hyperbolic isometry of $\mathbb{H}^4$ fixing $v$ and $w$ in $\mathbb{R}^3$. If $v$ and $w$ are equidistant to the $z$-axis then $A$ and $B$ are linked.
\end{theorem}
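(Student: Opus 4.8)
The plan is to exhibit an explicit circle lying in both half-turn banks and then invoke Corollary \ref{corollary:punchline}. Throughout I work in the upper half-space model with boundary $\widehat{\mathbb{R}^3}$ and write $L$ for the $z$-axis. First I would pin down the two banks concretely. Since $A$ fixes $\infty$, leaves $L$ invariant, and is screw parabolic, its direction $b$ is along $L$ and its twisting plane $P$ is the vertical geodesic plane over $L$. From the pencil descriptions, $\mathcal{F}_A$ consists of the horizontal Euclidean planes (those orthogonal to $L$), while $\mathcal{R}_A$, the twisting pencil, is the permuted pencil of the rotational part, namely the Euclidean planes containing $L$. Intersecting a horizontal plane with a plane through $L$ shows that $\mathcal{K}_A$ is precisely the family of horizontal Euclidean lines meeting $L$; equivalently, by Theorem \ref{theorem:secretiffscrewparabolic} these are the boundaries of the vertical half-planes orthogonal to $P$ through a vertical geodesic ending at $\infty$. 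On the other side, Theorem \ref{theorem:secretiffpurehyperbolic} identifies $\mathcal{K}_B$ with the boundaries of the planes orthogonal to the axis $L_B$ of $B$, the geodesic from $v$ to $w$.

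Next I would reduce membership of a line $\ell$ in $\mathcal{K}_B$ to a statement of Euclidean geometry in $\mathbb{R}^3$. For a Euclidean line $\ell$, the plane $Q_\ell$ it bounds is the vertical half-plane over $\ell$, and the half-turn $H_{Q_\ell}$ restricts on $\widehat{\mathbb{R}^3}$ to the rotation by $\pi$ about $\ell$. A plane is orthogonal to $L_B$ exactly when its half-turn reverses $L_B$, that is, swaps the endpoints $v$ and $w$; hence $\ell\in\mathcal{K}_B$ if and only if rotation by $\pi$ about $\ell$ interchanges $v$ and $w$. This happens precisely when $\ell$ passes through the Euclidean midpoint $m=\tfrac{1}{2}(v+w)$ and is perpendicular to $v-w$.

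The heart of the argument is then to choose $\ell$ horizontal and meeting $L$, so that $\ell\in\mathcal{K}_A$, while forcing the two conditions above. I would take $\ell$ to be the horizontal line in the plane $\{z=m_3\}$ through $m$ and through the point $(0,0,m_3)\in L$; this automatically meets $L$ and lies in $\mathcal{K}_A$, and its direction is $(m_1,m_2,0)$. Passing through $m$ gives the first condition for free, so everything comes down to checking $\ell\perp(v-w)$. Computing the dot product of $(m_1,m_2,0)$ with $v-w$ and substituting $m=\tfrac{1}{2}(v+w)$ collapses it to $\tfrac{1}{2}\bigl[(v_1^2+v_2^2)-(w_1^2+w_2^2)\bigr]$, which is exactly half the difference of the squared distances of $v$ and $w$ to $L$. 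Thus the perpendicularity, and with it $\ell\in\mathcal{K}_B$, holds if and only if $v$ and $w$ are equidistant from $L$, which is the hypothesis.

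I expect the main obstacle to be the bookkeeping around degenerate configurations rather than the central computation: when $m$ already lies on $L$ the line $\ell$ is no longer determined by $m$ and $(0,0,m_3)$, and when $v-w$ is vertical the perpendicularity constraint degenerates. In each such case I would note that equidistance is automatic (it forces $w_1=-v_1$ and $w_2=-v_2$ when $m\in L$) and that a horizontal line through $m$ perpendicular to the horizontal part of $v-w$ still exists, so a suitable $\ell\in\mathcal{K}_A\cap\mathcal{K}_B$ can always be produced. Once such $\ell$ is found, Corollary \ref{corollary:punchline} immediately yields that $A$ and $B$ are linked.
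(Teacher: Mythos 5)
Your proposal is correct and follows essentially the same route as the paper: both take the horizontal line through the midpoint $m$ of $v$ and $w$ and the point $(0,0,m_3)$ on the $z$-axis as the candidate element of $\mathcal{K}_A\cap\mathcal{K}_B$, and both reduce membership in $\mathcal{K}_B$ to the same dot-product computation showing perpendicularity to the line through $v$ and $w$ is equivalent to equidistance. Your added justification of the $\mathcal{K}_B$ criterion via the half-turn swapping $v$ and $w$, and your treatment of the degenerate configurations ($m$ on the $z$-axis, $v-w$ vertical), are welcome details that the paper's proof leaves implicit.
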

If $v=(v_1,v_2,v_3)$ and $w=(w_1,w_2,w_3) \in \mathbb{R}^3$, we say that $v$ and $w$ are equidistant to the $z$-axis if $v_1^2+v_2^2=w_1^2+w_2^2$.
\begin{proof}
The unique element of $d_\infty \in \mathcal{D}_B$ containing
$\infty$ is the Euclidean line connecting $v$ and $w$. Let $m$
be the midpoint of $v$ and $w$. Thus,
\[m=\biggl(\frac{v_1+w_1}{2}, \frac{v_2+w_2}{2},
\frac{v_3+w_3}{2} \biggr).\]
Let $(m_1, m_2, m_3)=m$. Then the horizontal Euclidean line $k_m$
connecting $m$ and the $z$-axis through $(0,0,m_3)$ is an element
of $\mathcal{K}_A$. The angle between $k_m$ and $d_\infty$ can be
computed from the inner product $(-m_1, -m_2, 0)\cdot(v-m)$, which
simplifies to $-\frac 14 \bigl(v_1^2-w_1^2\bigr)-\frac 14
\bigl( v_2^2 -w_2^2\bigr)$. The hypothesis implies that this is
zero so $k_m$ and $d_\infty$ are perpendicular and hence $k_m \in
\mathcal{K}_B$. Then $A$ and $B$ are linked.
\end{proof}

Let $A$ be screw parabolic and $B$ be a pure parabolic isometries of $\mathbb{H}^4$ with different fixed points. Throughout this case, let $x$ be the fixed point of $A$ and $y$ be the fixed point of $B$. Suppose $L$ is the boundary of the twisting plane of $A$. There are unique elements $d_x\in \mathcal{D}_B$ containing $x$ and $d_y\in \mathcal{D}_A$ containing $y$. If $y\notin L$, denote the unique element of $\mathcal{K}_A$ containing $y$ by $k_y$.

\begin{theorem} [Condition 1] If there is an $h\in \mathcal{F}_A \cap \mathcal{F}_B$, then $A$ and $B$ are linked.
\end{theorem}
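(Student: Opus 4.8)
The plan is to produce a single circle lying in $\mathcal{K}_A \cap \mathcal{K}_B$ and then invoke Corollary \ref{corollary:punchline}. The key observation is that the hypothesized common sphere $h \in \mathcal{F}_A \cap \mathcal{F}_B$ already carries both fixed points: every element of $\mathcal{F}_A$ contains $x$ and every element of $\mathcal{F}_B$ contains $y$, so $x,y \in h$. Since $x \neq y$, this is exactly what will force the circle we build to be nondegenerate.

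First I would treat the generic case $y \notin L$. Because $y$ lies off the twisting plane of $A$ and is not its fixed point, there is a unique element of $\mathcal{F}_A$ containing $y$; as $h \in \mathcal{F}_A$ and $y \in h$, that element must be $h$ itself. Now $k_y$, the unique element of $\mathcal{K}_A$ through $y$, is by definition of the form $s \cap t$ with $s \in \mathcal{F}_A$ the permuted-pencil sphere through $y$ and $t \in \mathcal{R}_A$ the twisting-pencil sphere through $y$; hence $s = h$ and $k_y \subset h$. Since $y \in k_y \subset h$ with $h \in \mathcal{F}_B$, Theorem \ref{theorem:secretiffpureparabolic} gives $k_y \in \mathcal{K}_B$, while $k_y \in \mathcal{K}_A$ holds by its very construction. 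Thus $k_y \in \mathcal{K}_A \cap \mathcal{K}_B$.

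For the degenerate case $y \in L$, where $k_y$ is not even defined because infinitely many elements of $\mathcal{R}_A$ pass through $y$, I would instead choose any sphere $t \in \mathcal{R}_A$. Since $t \supseteq L$ and $L$ contains both $x$ and $y$, the intersection $c := h \cap t$ contains the two distinct points $x,y$ and is therefore a genuine circle. By construction $c = h \cap t$ with $h \in \mathcal{F}_A$ and $t \in \mathcal{R}_A$, so $c \in \mathcal{K}_A$; and $y \in c \subset h \in \mathcal{F}_B$ yields $c \in \mathcal{K}_B$ again by Theorem \ref{theorem:secretiffpureparabolic}. In either case a common element of the two half-turn banks is exhibited, and Corollary \ref{corollary:punchline} then gives that $A$ and $B$ are linked.

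The essential geometric point is that the shared permuted-pencil sphere $h$ is \emph{forced} to be the $\mathcal{F}_A$-sphere through $y$, which is what collapses the two separate half-turn-bank membership conditions onto the single circle $k_y$. I expect no hard computation here; the only place demanding care is the bookkeeping of degenerate configurations, namely splitting on whether $y \in L$ and confirming that $c$ is nondegenerate, which rests entirely on $x \neq y$.
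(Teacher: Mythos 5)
Your proposal is correct and follows essentially the same route as the paper: both arguments rest on the observation that $h$ contains both fixed points $x$ and $y$, split on whether $y\in L$, and in the generic case exhibit the same circle (your $k_y=h\cap t$ is exactly the paper's circle through $x$, $y$, and the second intersection point $a$ of $L$ with $h$), with membership in $\mathcal{K}_B$ coming from Theorem \ref{theorem:secretiffpureparabolic}. The only cosmetic difference is that you identify the circle abstractly via the uniqueness of $k_y$ while the paper constructs it from three explicit points.
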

\begin{proof}
As $A$ and $B$ are both parabolic, $h$ contains both $x$ and $y$. Since $L\in \mathcal{D}_A$, it intersects $h$ orthogonally in a point $a$ other than $x$. If $a=y$, then $L=d_x$ so any circle in $h$ connecting $x$ and $a$ is an element of $\mathcal{K}_A \cap \mathcal{K}_B$ making $A$ and $B$ linked. If $a\neq y$ the points $a$, $x$ and $y$ form a circle $c$ in $h$. The circle $c$ intersects $L$ in two points and also $d_x$ in $x$ and $y$. Hence, $c$ is in both $\mathcal{K}_A $ and $ \mathcal{K}_B$ so $A$ and $B$ are linked.
\end{proof}

\begin{theorem} [Condition 2] If $y\in L$, then $A$ and $B$ are linked.
\end{theorem}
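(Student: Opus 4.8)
The plan is to produce a single circle $c$ lying in both $\mathcal{K}_A$ and $\mathcal{K}_B$; by Corollary \ref{corollary:punchline} this is exactly what is needed to conclude that $A$ and $B$ are linked. Since linking and the half-turn banks are preserved under conjugation, I would first conjugate so that the fixed point $y$ of $B$ is $\infty$. Then $B$ becomes a pure translation $z\mapsto z+b$ with $b\neq 0$, its permuted pencil $\mathcal{F}_B$ consists of the Euclidean planes orthogonal to $b$, and by Theorem \ref{theorem:secretiffpureparabolic} its bank $\mathcal{K}_B$ is precisely the set of Euclidean lines orthogonal to $b$. Because the fixed point $x$ of $A$ lies on $L=\partial P$ (the fixed point of a screw parabolic sits on the boundary of its twisting plane) and the hypothesis gives $y=\infty\in L$, the circle $L$ is the Euclidean line through $x$ in some direction $u$.

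The key observation is that $y\in L$ lets me take $\infty$ as the second intersection point of the sought circle with $L$, so that $c$ can be a straight Euclidean line through $x$ that also passes through $y$. Concretely, I would choose a nonzero direction $w$ orthogonal to both $u$ and $b$; such $w$ exists because $\mathrm{span}\{u,b\}$ is at most $2$-dimensional in $\mathbb{R}^3$, so its orthogonal complement is nontrivial. Let $c$ be the Euclidean line through $x$ with direction $w$ (together with $\infty$). This circle connects the two fixed points $x$ and $y$.

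It then remains to verify membership in both banks. Since $w\perp b$, the line $c$ lies in a Euclidean plane orthogonal to $b$, so $c\in\mathcal{K}_B$. For $\mathcal{K}_A$, let $Q$ be the plane of $\mathbb{H}^4$ whose boundary is $c$, i.e. the vertical plane over $c$. Both $P$ and $Q$ contain the vertical geodesic $\ell$ over $x$, which has endpoints $x$ and $\infty$ and is therefore bounded by $x$; moreover the dihedral angle between $P$ and $Q$ along $\ell$ equals the Euclidean angle between $u$ and $w$, which is $\pi/2$ by the choice $w\perp u$. Hence $Q$ is a plane orthogonal to $P$ through a line bounded by $x$, and Theorem \ref{theorem:secretiffscrewparabolic} gives $c=\partial Q\in\mathcal{K}_A$. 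Thus $c\in\mathcal{K}_A\cap\mathcal{K}_B$ and Corollary \ref{corollary:punchline} finishes the argument.

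I expect the main obstacle to be the bookkeeping at the point at infinity: one must be sure that after conjugation $L$ really degenerates to a straight line through $x$, that the second intersection of $c$ with $L$ is exactly $y=\infty$ (so that the relevant geodesic $\ell$ is bounded by $x$ as Theorem \ref{theorem:secretiffscrewparabolic} requires), and that a direction $w$ can be chosen meeting the two orthogonality constraints simultaneously. Each of these is straightforward once the conjugation is fixed, and the degenerate subcase $u\parallel b$ causes no trouble, since then every $w\perp u$ automatically satisfies $w\perp b$.
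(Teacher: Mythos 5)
Your proof is correct and is essentially the paper's argument in coordinates: after sending $y$ to $\infty$, your line $c$ through $x$ orthogonal to both $u$ and $b$ is exactly the circle $h_x\cap h_y$ (with $h_x\in\mathcal{F}_B$, $h_y\in\mathcal{F}_A$ the pencil elements through both fixed points) that the paper intersects, and your degenerate subcase $u\parallel b$ corresponds to the paper's case $h_x=h_y$.
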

\begin{proof}
There are spheres $h_x\in \mathcal{F}_B$ and $h_y\in\mathcal{F}_A$ containing both $x$ and $y$. If $h_x=h_y$, $A$ and $B$ are linked as per previous condition. Otherwise, $h_x\cap h_y$ is a circle $k$ passing through $x$ and $y$. If $y\in L$, $k\in\mathcal{K}_A$ but $k$ is also in $\mathcal{K}_B$ so $A$ and $B$ are linked.
\end{proof}

\begin{theorem} [Condition 3] If $L\subset h$ for some $h\in \mathcal{F}_B$, then $A$ and $B$ are linked.
\end{theorem}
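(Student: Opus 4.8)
The plan is to produce one circle lying in both half-turn banks $\mathcal{K}_A$ and $\mathcal{K}_B$; by Corollary \ref{corollary:punchline} such a common element links the pair. If $y\in L$ the previous condition already settles the matter, so I assume $y\notin L$ and take the distinguished circle $k_y$, the unique element of $\mathcal{K}_A$ containing $y$. The whole argument consists of showing that this same $k_y$ lies in $\mathcal{K}_B$.

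First I would pin down the geometry of $k_y$. Every element of $\mathcal{K}_A$ passes through the fixed point $x$ of $A$: any factor $s\in\mathcal{F}_A$ passes through $x$, and any factor $t\in\mathcal{R}_A$ contains $L$ and hence the point $x\in L$, so $s\cap t$ contains $x$. Thus $k_y$ is the circle through $x$ and $y$. By Theorem \ref{theorem:secretiffscrewparabolic}, $k_y=\partial Q$ for a plane $Q$ orthogonal to the twisting plane $P_A$ meeting it in a line bounded by $x$; the two endpoints of that line are $x$ and a second point $a\in L$, so $k_y$ meets $L$ exactly in $\{x,a\}$. Since $y\notin L$ forces $a\neq y$, and $x\neq y$ because the fixed points differ, the points $x,a,y$ are three distinct points.

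The decisive step is to locate $k_y$ inside a single element of $\mathcal{F}_B$. The hypothesis $L\subset h$ places both $x$ and $a$ on the sphere $h\in\mathcal{F}_B$, while $y$ lies on $h$ automatically because every element of the permuted pencil of the parabolic $B$ passes through its fixed point $y$. Hence the three distinct points $x,a,y$ all lie on $h$, and the unique circle through three cospherical points is contained in the sphere carrying them; as $k_y$ is precisely the circle through $x,a,y$, I conclude $k_y\subset h$. Conjugating $y=\infty$ makes this transparent: $h$ becomes a Euclidean plane, $k_y$ becomes the line through the two points $x,a$ of $h$, and a line through two points of a plane lies in that plane.

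With $y\in k_y\subset h\in\mathcal{F}_B$ established, Theorem \ref{theorem:secretiffpureparabolic} gives $k_y\in\mathcal{K}_B$ at once, and since $k_y\in\mathcal{K}_A$ by construction, the half-turn about $k_y$ is the sought common factor linking $A$ and $B$. The one point demanding care is the existence of the second intersection $a$ of $k_y$ with $L$ and its distinctness from $x$ and $y$: this is what licenses the passage from three points to a single circle on $h$, and it is exactly where the standing assumption $y\notin L$ enters.
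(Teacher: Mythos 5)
Your proof is correct and follows essentially the same route as the paper's: reduce to $y\notin L$, observe that $k_y$ passes through $x$, $y$, and a second point of $L$, note that all three lie on $h$, and conclude $k_y\subset h$ and hence $k_y\in\mathcal{K}_B$. You supply somewhat more justification than the paper does (why every element of $\mathcal{K}_A$ contains $x$, and why the second intersection point with $L$ exists and is distinct), but the argument is the same.
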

\begin{proof}
We may assume $y\notin L$. Otherwise, the linking of $A$ and $B$ is implied by the previous condition. Then there is a unique $k_y\in \mathcal{K}_A$ containing $y$. It intersects $L$ in a point $p$ other than $x$. The circle $k_y$ is uniquely determined by the points $x,p,y \in h$. Hence, $k_y\subset h$ so $k_y\in \mathcal{K}_B$ since $B$ is pure parabolic. The half-turn about $k_y$ links $A$ and $B$.
\end{proof}

\begin{theorem} [Condition 4] If $k_y\perp d_x$, then $A$ and $B$ are linked.
\end{theorem}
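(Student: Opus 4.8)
The plan is to exhibit $k_y$ itself as the common element of $\mathcal{K}_A$ and $\mathcal{K}_B$ and then invoke Corollary~\ref{corollary:punchline}. Since $k_y$ is defined only when $y\notin L$, I would first dispose of the case $y\in L$, which is precisely the hypothesis of Condition~2 and hence already settled. Assuming $y\notin L$, the membership $k_y\in\mathcal{K}_A$ is given for free, so the entire content of the argument is to verify that $k_y\in\mathcal{K}_B$. Two preliminary observations fix the geometry: because $A$ is screw parabolic with fixed point $x$, Theorem~\ref{theorem:secretiffscrewparabolic} says every element of $\mathcal{K}_A$ bounds a plane meeting the twisting plane in a line bounded by $x$, so $k_y$ passes through $x$; and by its defining property $k_y$ also passes through $y$. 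Thus $k_y$ and $d_x$ are two circles sharing the pair of points $x,y$.

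Next I would conjugate so that $y=\infty$, turning $B$ into the translation $z\mapsto z+b$. In this normalization $\mathcal{F}_B$ is the family of Euclidean planes orthogonal to $b$ (each completed by $\infty$), the dual element $d_x\in\mathcal{D}_B$ is the Euclidean line through $x$ in the direction of $b$, and $k_y$---being a circle through $x$ and through $y=\infty$---is a Euclidean line through $x$. The decisive point is that $d_x$ is parallel to $b$ and hence orthogonal to every member of $\mathcal{F}_B$; therefore the hypothesis $k_y\perp d_x$ says precisely that the line $k_y$ is orthogonal to $b$. Consequently $k_y$ lies inside the unique Euclidean plane $h_x$ through $x$ orthogonal to $b$, and $h_x\in\mathcal{F}_B$. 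Since $y\in k_y\subset h_x$ with $h_x\in\mathcal{F}_B$, Theorem~\ref{theorem:secretiffpureparabolic} gives $k_y\in\mathcal{K}_B$. Together with $k_y\in\mathcal{K}_A$ this places $k_y\in\mathcal{K}_A\cap\mathcal{K}_B$, and Corollary~\ref{corollary:punchline} yields that $A$ and $B$ are linked.

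The one step that demands genuine care, rather than bookkeeping, is the translation of the perpendicularity condition $k_y\perp d_x$ between two circles into the containment statement $k_y\subset h_x$ for some $h_x\in\mathcal{F}_B$. This hinges on the structural fact that the dual-pencil element $d_x$ is orthogonal to all of $\mathcal{F}_B$ while passing through both common points $x$ and $y$; it is this rigidity that converts an angle condition at $x$ and $y$ into membership in the permuted pencil. The conjugation $y=\infty$ is what makes the implication transparent, since it linearizes $\mathcal{F}_B$, $d_x$ and $k_y$ simultaneously and reduces the claim to the elementary Euclidean fact that a line orthogonal to $b$ lies in the plane through its basepoint orthogonal to $b$.
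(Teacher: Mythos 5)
Your proof is correct and follows the paper's own argument: both hinge on showing that the perpendicularity $k_y\perp d_x$ at the two common points $x$ and $y$ forces $k_y\subset h_x$ for the unique $h_x\in\mathcal{F}_B$ through $x$, and hence $k_y\in\mathcal{K}_A\cap\mathcal{K}_B$. The only cosmetic difference is that you certify $k_y\in\mathcal{K}_B$ by citing Theorem~\ref{theorem:secretiffpureparabolic}, whereas the paper exhibits the element of $\mathcal{T}_B$ explicitly as the span of $k_y\cup d_x$; both are immediate once the containment $k_y\subset h_x$ is established.
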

If $y\notin L$, there is a unique $k_y \in \mathcal{K}_A$ containing $y$. If $y\in L$, there are many choices for $k_y\in \mathcal{K}_A$ but choosing $k_y\perp d_x$ is not necessary.
\begin{proof}
There is a unique $h_x\in \mathcal{F}_B$ containing $x$. It intersects $d_x$ in $x$ and $y$ which are also in $k_y$. Every circle orthogonal to $d_x$ through $x$ and $y$ must be a subset of $h_x$ so $k_y\subset h_x$. The span of $k_y\cup d_x$ is an element of $\mathcal{T}_B$ so $k_y\in \mathcal{K}_B$. Recall that $k_y \in \mathcal{K}_A$. Therefore, $A$ and $B$ are linked.
\end{proof}

Linking pairs of hyperbolic isometries are better expressed inside $\mathbb{H}^4$. The hyperplanes and planes in $\mathbb{H}^4$ bounded by the pencils can intersect the invariant planes or lines of an isometry.
\par Let $A$ be a pure loxodromic isometry of $\mathbb{H}^4$ with axis $\mathrm{Ax}_A$ and twisting plane $\tau_A$. Let $B$ be a pure hyperbolic isometry of $\mathbb{H}^4$ with axis $\mathrm{Ax}_B$. Assume $\tau_A$ and $\tau_B$ are disjoint. Then there is a unique line $N$ perpendicular to both $\mathrm{Ax}_A$ and $\mathrm{Ax}_B$. It intersects $A$ in a point $a$ and $B$ in a point $b$. There is a unique hyperplane $h_a$ orthogonal to $\mathrm{Ax}_A$ through $a$. Likewise, there is a unique hyperplane $h_b$ orthogonal to $\mathrm{Ax}_B$ through $b$. Since $\mathrm{Ax}_A \subset \tau_A$, there is a unique line $L_a\subset \tau_A$ perpendicular to $\mathrm{Ax}_A$ through $a$. Throughout this case, $A$, $B$, $\mathrm{Ax}_A$, $\mathrm{Ax}_B$, $\tau_A$, $N$, $a$, $b$ and $L_a$ are used.
\begin{theorem} [Condition 1] If $L_a=N$, then $A$ and $B$ are linked.
\end{theorem}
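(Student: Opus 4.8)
The goal is to produce a single plane $Q \subset \mathbb{H}^4$ with $\partial Q \in \mathcal{K}_A \cap \mathcal{K}_B$; the linking then follows at once from Corollary \ref{corollary:punchline}. The hypothesis $L_a = N$ is exactly what makes this feasible: it forces the common perpendicular $N$ to lie inside the twisting plane $\tau_A$, so that the single line $N$ is at once a line of $\tau_A$ meeting $\mathrm{Ax}_A$ perpendicularly at $a$ and a geodesic meeting $\mathrm{Ax}_B$ perpendicularly at $b$. In particular $b = N \cap \mathrm{Ax}_B$ lies in $\tau_A$, and I will arrange that $Q \cap \tau_A = N$.

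First I would introduce two totally geodesic walls along $N$. Let $W$ be the unique hyperplane containing $N$ and orthogonal to $\tau_A$; at each point of $N$ it is spanned by the direction of $N$ together with the two directions normal to $\tau_A$, so that $W \cap \tau_A = N$. Let $h_b$ be the hyperplane orthogonal to $\mathrm{Ax}_B$ through $b$; since $N \perp \mathrm{Ax}_B$ at $b$, the geodesic $N$ lies in $h_b$. Both $W$ and $h_b$ are $3$-dimensional, so $\dim(W \cap h_b) \geq 2$, and since $N \subset W \cap h_b$ I may choose a plane $Q$ with $N \subset Q \subset W \cap h_b$.

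Next I would verify the two membership claims. Because $Q \subset h_b$ and $b \in Q$ while $h_b \perp \mathrm{Ax}_B$ at $b$, the plane $Q$ is orthogonal to $\mathrm{Ax}_B$, so $\partial Q \in \mathcal{K}_B$ by Theorem \ref{theorem:secretiffpurehyperbolic}. For $\mathcal{K}_A$ I use $Q \subset W$: since $W \cap \tau_A = N$ and $N \subset Q$, we get $Q \cap \tau_A = N$, a line in $\tau_A$; the direction of $Q$ transverse to $N$ lies in $W$ and cannot be tangent to $\tau_A$ (else $Q = \tau_A$, contradicting $Q \cap \tau_A = N$), hence it is normal to $\tau_A$, giving $Q \perp \tau_A$ along $N$. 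Finally, at $a$ the transverse direction of $Q$ is normal to $\tau_A$ and therefore orthogonal to $\dot{\mathrm{Ax}}_A(a) \in T_a\tau_A$, while the direction of $N$ is orthogonal to $\mathrm{Ax}_A$ because $N = L_a$; thus $Q \perp \mathrm{Ax}_A$ at $a$. Theorem \ref{theorem:secretiffpureloxodromic} then gives $\partial Q \in \mathcal{K}_A$, and $\partial Q \in \mathcal{K}_A \cap \mathcal{K}_B$ yields the linking via Corollary \ref{corollary:punchline}.

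The step I expect to be most delicate is the verification that $Q \perp \tau_A$: one must check that membership in the wall $W$ really forces the transverse direction of $Q$ to be normal to $\tau_A$, and that this orthogonality persists along the entire line $N$ rather than merely at $b$. This rests on the fact that $\tau_A$ and $W$ are totally geodesic, with $W$ built from the parallel normal bundle of $\tau_A$ along $N$, so that normality to $\tau_A$ is preserved along $N$. The compatibility of the three orthogonality requirements---to $\tau_A$, to $\mathrm{Ax}_A$, and to $\mathrm{Ax}_B$---is precisely what the hypothesis $L_a = N$ supplies, by letting the one line $N$ serve simultaneously as a line of $\tau_A$ and as the common perpendicular of the two axes.
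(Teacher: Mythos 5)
Your proof is correct and follows essentially the same strategy as the paper: use $L_a=N$ to place $N$ inside $\tau_A$, construct a plane through $N$ orthogonal to $\tau_A$, $\mathrm{Ax}_A$, and $\mathrm{Ax}_B$, and invoke the geometric criteria (Theorems \ref{theorem:secretiffpureloxodromic} and \ref{theorem:secretiffpurehyperbolic}) together with Corollary \ref{corollary:punchline}. The only difference is cosmetic: the paper obtains the plane as the orthogonal complement through $N$ of the hyperplane spanned by $\tau_A\cup\mathrm{Ax}_B$ (splitting off the degenerate case $\mathrm{Ax}_B\subset\tau_A$), whereas your intersection $W\cap h_b$ handles both cases uniformly.
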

\begin{proof}
The hypothesis implies that $\tau_A$ intersects $\mathrm{Ax}_B$ at least at $b$. If $\mathrm{Ax}_B \subset \tau_A$, there are plenty of planes orthogonal to $\tau_A$ through $N$. Any of them is orthogonal to $\mathrm{Ax}_B$. If $\mathrm{Ax}_B$ intersects $\tau_A$ only at $b$, they span a hyperplane $h$. There is a plane $k$ orthogonal to $h$ through $N$. Since $h$ contains $\tau_A$ and $\mathrm{Ax}_B$, $k$ is also orthogonal to $\tau_a$ and $\mathrm{Ax}_B$. (By dimension count, $k\subset h_b$.) Hence, $k$ is an element of both $\mathcal{K}_A$ and $\mathcal{K}_B$, so $A$ and $B$ are linked.
\end{proof}

\begin{theorem} [Condition 2] If $L_a\subset h_b$, then $A$ and $B$ are linked.
\end{theorem}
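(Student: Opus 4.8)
The plan is to exhibit a single plane $Q$ with $\partial Q \in \mathcal{K}_A \cap \mathcal{K}_B$ and then quote Corollary \ref{corollary:punchline}. By the geometric criteria this amounts to producing one plane that is orthogonal to $\mathrm{Ax}_A$ and to $\tau_A$ along a line of $\tau_A$ (to land in $\mathcal{K}_A$ via Theorem \ref{theorem:secretiffpureloxodromic}) and simultaneously orthogonal to $\mathrm{Ax}_B$ (to land in $\mathcal{K}_B$ via Theorem \ref{theorem:secretiffpurehyperbolic}).

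First I would locate $b$ inside $h_a$. Since $N$ meets $\mathrm{Ax}_A$ orthogonally at $a$, its initial direction at $a$ is tangent to $h_a$; as $h_a$ is totally geodesic, the entire geodesic $N$ lies in $h_a$, so $b \in N \subset h_a$. The disjointness assumption gives $b \notin \tau_A$, hence $b \notin L_a$, so inside the $3$-dimensional hyperplane $h_a$ the line $L_a$ and the point $b$ span a unique totally geodesic plane $Q$. This $Q$ is my candidate.

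Then I would verify the two memberships. For $\mathcal{K}_B$: the hypothesis $L_a \subset h_b$ together with $b \in h_b$ forces $Q \subset h_b$, because $h_b$ is totally geodesic and contains both $L_a$ and $b$; since $b \in Q \cap \mathrm{Ax}_B$ and $h_b \perp \mathrm{Ax}_B$ at $b$, we get $Q \perp \mathrm{Ax}_B$, so $\partial Q \in \mathcal{K}_B$. For $\mathcal{K}_A$: choosing at $a$ an orthonormal frame $e_1$ along $\mathrm{Ax}_A$, $e_2$ along $L_a$, and $e_3,e_4$ spanning the complement of $T_a\tau_A$, we have $T_a h_a = \mathrm{span}(e_2,e_3,e_4)$, hence $T_a Q \subset \mathrm{span}(e_2,e_3,e_4)$. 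Because $Q \cap \tau_A \subseteq h_a \cap \tau_A = L_a$ while $L_a \subset Q$, the intersection $Q \cap \tau_A$ is exactly the line $L_a$; and the direction of $Q$ transverse to $L_a$, being orthogonal to $e_2$ and lying in $\mathrm{span}(e_2,e_3,e_4)$, automatically sits in $\mathrm{span}(e_3,e_4)$ and is therefore orthogonal to $\tau_A$. Thus $Q$ is orthogonal to $\tau_A$ through $L_a$ and to $\mathrm{Ax}_A$ at $a$, giving $\partial Q \in \mathcal{K}_A$ by Theorem \ref{theorem:secretiffpureloxodromic}.

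The step I expect to be the \emph{crux} is the $\mathcal{K}_A$ verification, namely proving that $Q$ meets $\tau_A$ in the full line $L_a$ and is perpendicular to it, and this is precisely where the hypothesis $L_a \subset h_b$ is indispensable: it is what puts $L_a$, rather than merely the single point $N \cap \tau_A$, into $Q$, so that $Q \cap \tau_A$ is a line and $\partial Q$ belongs to the loxodromic bank. With both memberships in hand, $\partial Q \in \mathcal{K}_A \cap \mathcal{K}_B$, and Corollary \ref{corollary:punchline} yields that $A$ and $B$ are linked.
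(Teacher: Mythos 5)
Your construction is, in substance, the paper's: the plane you span by $L_a$ and $b$ is exactly the plane the paper spans by $L_a$ and $N$ (your $Q$ contains $a$ and $b$, hence the geodesic $N$ through them), and your two membership verifications --- $Q\subset h_a$ giving orthogonality to $\mathrm{Ax}_A$ and to $\tau_A$ along the line $L_a$, and $Q\subset h_b$ giving orthogonality to $\mathrm{Ax}_B$ --- are the same checks the paper makes, carried out with rather more care on the $\mathcal{K}_A$ side than the paper itself bothers with.

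The one step that fails is your exclusion of the degenerate case. You assert that ``the disjointness assumption gives $b\notin\tau_A$, hence $b\notin L_a$.'' The disjointness hypothesis in this section concerns the axes (the printed ``$\tau_A$ and $\tau_B$ are disjoint'' is a slip, since a pure hyperbolic $B$ has no twisting plane), and disjoint axes do not prevent $\mathrm{Ax}_B$ from meeting $\tau_A$ at $b$: Condition 1 of this same case is precisely the configuration $L_a=N$, in which $b\in L_a\subset\tau_A$. That configuration satisfies the hypothesis $L_a\subset h_b$ automatically (because $N$ is orthogonal to $\mathrm{Ax}_B$ at $b$ and hence lies in $h_b$), so it genuinely can occur under the hypothesis of Condition 2, and when it does your plane ``spanned by $L_a$ and the point $b$'' is not defined, since $b$ lies on $L_a$. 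The repair is exactly what the paper does: if $b\in L_a$ then $L_a=N$ and you are in the already-proved Condition 1; otherwise your argument goes through verbatim. With that patch the proof is correct.
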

\begin{proof}
If $L_a\subset h_b$, it is possible that $L_a=N$, which is the previous case. We may assume that $L_a\neq N$. Then $L_a$ and $N$ span a plane $k$ which sits in $h_a$. It follows that $k\subset h_b$ since both $L_a$ and $N$ lie in $h_b$. Thus $k$ is orthogonal to $\mathrm{Ax}_B$ and $\tau_A$ so $k\in \mathcal{K}_B$ linking $A$ and $B$.
\end{proof}

Let $A$ and $B$ be screw parabolic isometries of $\mathbb{H}^4$, with disjoint fixed points $x$ and $y$ respectively. Let $\tau_A$ and $\tau_B$ be their respective twisting planes. Define $L_A =\partial \tau_A$ and $ L_B=\partial\tau_B$. As $x\neq y$, there are unique elements $h_x\in \mathcal{F}_B$ and $h_y\in\mathcal{F}_A$ such that $x\in h_x$ and $y\in h_y$. The conditions in which a common orthogonal plane exists are quite restrictive, so $A$ and $B$ are highly unlikely to be linked.
\begin{theorem} [Condition 1]
If there is $h\in \mathcal{F}_A \cap \mathcal{F}_B$ and the points $x$, $y$, $x_h$ and $y_h$ form a circle, then $A$ and $B$ are linked.
\end{theorem}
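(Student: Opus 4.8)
The plan is to produce a single circle $c$ lying in $\mathcal{K}_A\cap\mathcal{K}_B$; once this is done, Corollary \ref{corollary:punchline} immediately yields that $A$ and $B$ are linked. So the whole argument reduces to the construction and verification of $c$.

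First I would unwind the hypotheses geometrically. Since $h\in\mathcal{F}_A$ and $A$ is parabolic with fixed point $x$, the sphere $h$ passes through $x$; likewise $h\in\mathcal{F}_B$ forces $y\in h$. The invariant circle $L_A=\partial\tau_A$ passes through $x$ and, being the base circle of the twisting pencil $\mathcal{R}_A$ whose members are all orthogonal to $h$, meets $h$ at $x$ and exactly one further point, namely $x_h$; symmetrically $L_B$ meets $h$ at $y$ and $y_h$. Thus the four points $x,y,x_h,y_h$ all lie on the sphere $h$, and by hypothesis they are concyclic, so the circle $c$ they determine satisfies $c\subset h$.

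Next I would verify that $c\in\mathcal{K}_A$. By construction $c\subset h\in\mathcal{F}_A$ and $c$ passes through the two points $x,x_h$ of $L_A$. To realize $c$ as $s\cap t$ with $s\in\mathcal{F}_A$ and $t\in\mathcal{R}_A$, take $s=h$ and let $t$ be the unique sphere through $c$ together with a third point of $L_A$. Then $t$ contains three points of the circle $L_A$, so $t$ meets the plane of $L_A$ in a circle through those three points, which must be $L_A$ itself; hence $L_A\subset t$, and by the third property of the twisting pencil $t\in\mathcal{R}_A$. Since $c\subset h$ and $c\subset t$ are both circles and $h\neq t$, we get $h\cap t=c$, so $c\in\mathcal{K}_A$. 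Interchanging the roles of $A$ and $B$ — using $h\in\mathcal{F}_B$, the points $y,y_h$ on $L_B$, and $\mathcal{R}_B$ — the identical argument gives $c\in\mathcal{K}_B$. (Equivalently one could invoke Theorem \ref{theorem:secretiffscrewparabolic}, since $c$ meets $L_A$ at $x$ and so bounds a plane orthogonal to $\tau_A$ through a line bounded by $x$.) Therefore $c\in\mathcal{K}_A\cap\mathcal{K}_B$, and Corollary \ref{corollary:punchline} completes the proof.

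The step I expect to be the main obstacle is the passage from ``the four points are concyclic in $h$'' to ``$c$ is genuinely an element of each bank'': one must confirm that $c$ really passes through a point of $L_A$ other than $x$ (namely $x_h$) and a point of $L_B$ other than $y$, and that the auxiliary sphere $t$ is nondegenerate, i.e. the chosen third point of $L_A$ does not lie in the plane of $c$. I would dispose of the degenerate configurations separately — when $L_A$ (or $L_B$) is tangent to $h$ so that $x_h=x$, when $L_A\subset h$, or when $c$ coincides with $L_A$ or $L_B$ — either by excluding them under the concyclicity hypothesis or by choosing $t$ directly from the coaxial family of spheres through $L_A$. The remaining checks are routine incidence facts about circles and spheres in $\widehat{\mathbb{R}^3}$.
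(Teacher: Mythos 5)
Your proposal is correct and follows essentially the same route as the paper: both take the circle $c$ determined by the four concyclic points, observe $c\subset h$, and conclude $c\in\mathcal{K}_A\cap\mathcal{K}_B$ from its orthogonal intersection with $L_A$ through $\{x,x_h\}$ and with $L_B$ through $\{y,y_h\}$ (the paper appeals to this orthogonality criterion directly, while you additionally spell out the auxiliary sphere $t\in\mathcal{R}_A$ realizing $c=h\cap t$). The degenerate configurations you flag are already excluded by the standing setup, since $L_A\cap h=\{x,x_h\}$ with $x_h\neq x$ forces $L_A\not\subset h$ and hence $c\neq L_A$.
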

Any $h\in \mathcal{F}_A$ intersect $L_A$ in two points. Let $x_h$ be the intersection point other than $x$. If $h\in \mathcal{F}_B$, let $y_h$ be the element of $h\cap L_B$ other than $y$.
\begin{proof}
Let $c$ be the circle formed by $x$, $y$, $x_h$ and $y_h$ as allowed by the hypothesis. We must show $c \in \mathcal{K}_A \cap \mathcal{K}_B$. Since $\{x,y,x_h,y_h\} \subset h$, $c\subset h$. Also $c$ is orthogonal to $L_A$ through $\{x,x_h\}$ so $c\in \mathcal{K}_A$. Similarly, $c$ is orthogonal to $L_B$ through $\{y,y_h\}$ so $c \in \mathcal{K}_B$. Hence $A$ and $B$ are linked.
\end{proof}

\begin{theorem}[Condition 2] If $y\in L_A$ and $x \in L_B$, then $A$ and $B$ are linked.
\end{theorem}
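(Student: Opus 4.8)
The plan is to exhibit a single circle lying in both half-turn banks and then invoke Corollary \ref{corollary:punchline}. First I would conjugate so that the fixed point $x$ of $A$ equals $\infty$ in $\widehat{\mathbb{R}^3}$. Recall that the fixed point of a screw parabolic isometry always lies on the boundary of its own twisting plane, so $x\in L_A$ and $y\in L_B$ hold automatically; together with the hypotheses $y\in L_A$ and $x\in L_B$, this shows that each of $L_A$ and $L_B$ is a circle passing through both $x$ and $y$. Since $x=\infty$, each of $L_A$ and $L_B$ is therefore a Euclidean straight line, and since $y$ is finite each passes through $y$. Thus the entire configuration reduces to two Euclidean lines $L_A,L_B$ through the common finite point $y$.

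Next I would take $c$ to be the straight Euclidean line through $y$ whose direction is perpendicular to the directions of both $L_A$ and $L_B$ (the cross product of their direction vectors when $L_A\neq L_B$, and any line through $y$ perpendicular to their common direction when $L_A=L_B$). Being a straight line, $c$ passes through $\infty=x$ as well as through $y$; and since its direction is perpendicular to those of $L_A$ and $L_B$ we have $c\neq L_A$ and $c\neq L_B$, so $c\cap L_A=\{x,y\}$ and $c\cap L_B=\{x,y\}$.

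It then remains to check that $c\in\mathcal{K}_A\cap\mathcal{K}_B$ via the geometric criterion of Theorem \ref{theorem:secretiffscrewparabolic}, which identifies $\mathcal{K}_A$ (respectively $\mathcal{K}_B$) with the circles meeting $L_A$ (respectively $L_B$) orthogonally in two points one of which is $x$ (respectively $y$). By construction $c$ is Euclidean-perpendicular to $L_A$ at $y$, and I would argue that two Euclidean-perpendicular lines are orthogonal circles that meet orthogonally at \emph{both} common points $y$ and $\infty$, since the inversive angle at $\infty$ equals the Euclidean angle at $y$. Hence $c$ meets $L_A$ orthogonally in two points one of which is $x$, so $c\in\mathcal{K}_A$; symmetrically $c$ meets $L_B$ orthogonally in two points one of which is $y$, so $c\in\mathcal{K}_B$. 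Therefore $c\in\mathcal{K}_A\cap\mathcal{K}_B$, and $A,B$ are linked by Corollary \ref{corollary:punchline}.

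The main obstacle is precisely the verification that Euclidean perpendicularity at the finite point $y$ yields conformal orthogonality at the point $\infty$ as well; this is the one place where the circles-through-$\infty$ viewpoint must be handled carefully, and it is what allows the single line $c$ to satisfy simultaneously the requirement that ``one of the two intersection points is the fixed point'' for $\mathcal{K}_A$ (with $x$) and for $\mathcal{K}_B$ (with $y$). A minor point to dispatch is the degenerate case $L_A=L_B$, where any line through $y$ perpendicular to the common direction serves, since it still meets the common line orthogonally in the pair $\{x,y\}$.
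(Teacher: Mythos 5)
Your proof is correct. After normalizing $x=\infty$, the hypotheses $y\in L_A$, $x\in L_B$ (together with the automatic facts $x\in L_A$, $y\in L_B$) do reduce the configuration to two Euclidean lines through $y$, and the common perpendicular line $c$ at $y$ passes through $\{x,y\}$ and meets each of $L_A$, $L_B$ orthogonally at both intersection points, which is exactly what Theorem \ref{theorem:secretiffscrewparabolic} requires for membership in $\mathcal{K}_A$ and in $\mathcal{K}_B$; your worry about the angle at $\infty$ is resolved correctly, since two circles meet at equal angles at their two common points. The paper reaches the same conclusion by a coordinate-free route: it takes $c=h_x\cap h_y$, where $h_x\in\mathcal{F}_B$ and $h_y\in\mathcal{F}_A$ are the permuted-pencil spheres through $x$ and $y$ respectively, notes that the hypothesis forces $h_y\cap L_A=\{x,y\}=h_x\cap L_B$, and concludes $c\in\mathcal{K}_A\cap\mathcal{K}_B$ directly from the definition of the half-turn banks (handling $h_x=h_y$, i.e.\ $L_A=L_B$, separately, as you do). In the normalized picture these are the same circle: $h_y$ and $h_x$ become the planes through $y$ perpendicular to $L_A$ and $L_B$, and their intersection is precisely your cross-product line. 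What your version buys is an explicit, computable description of the linking circle and a clean reduction of the orthogonality check to Euclidean perpendicularity at a single finite point; what the paper's version buys is independence from the normalization and a verification that uses only the pencil definitions rather than the boundary reformulation of the geometric criterion.
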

\begin{proof}
We may assume $h_x\neq h_y$; otherwise $L_A=L_B$ making any circle in $h_x$ that connects $x$ to $y$ an element of $\mathcal{K}_A \cap \mathcal{K}_B$. Let $c$ be $h_x\cap h_y$ which is a circle containing $\{x,y\}$. The hypothesis implies that $h_y\cap L_A =\{x,y\}=h_x\cap L_B$, so $c$ is orthogonal to both $L_A$ and $L_B$. Therefore, $c \in \mathcal{K}_A \cap \mathcal{K}_B$. The existence of $c$ makes $A$ and $B$ linked.
\end{proof}

\begin{theorem} [Condition 3] If $h_y\cap L_A\subset h_x$, $h_x\cap L_B \subset h_y$ and $h_x\neq h_y$, then $A$ and $B$ are linked.
\end{theorem}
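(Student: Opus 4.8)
The plan is to exhibit a single circle $c$ lying in $\mathcal{K}_A \cap \mathcal{K}_B$ and then invoke Corollary \ref{corollary:punchline}. The natural candidate is $c = h_x \cap h_y$. Since $A$ is parabolic with fixed point $x$, every element of $\mathcal{F}_A$ contains $x$, so $h_y$ contains both $x$ (the fixed point) and $y$ (by its defining property); symmetrically $h_x$ contains both $y$ and $x$. Thus $h_x$ and $h_y$ are two spheres sharing the two distinct points $x$ and $y$, and because $h_x \neq h_y$ their intersection $c = h_x \cap h_y$ is an honest circle through $x$ and $y$ rather than a point.

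Next I would prove $c \in \mathcal{K}_A$. Since $L_A \in \mathcal{D}_A$ and $h_y \in \mathcal{F}_A$, and both pass through $x \in L_A$, the duality between $\mathcal{F}_A$ and $\mathcal{D}_A$ makes $h_y \cap L_A$ exactly two points; write $h_y \cap L_A = \{x, p\}$. The hypothesis $h_y \cap L_A \subset h_x$ forces $p \in h_x$, and as $p \in h_y$ already, we get $p \in h_x \cap h_y = c$. Hence $c$ meets $L_A$ in the two points $x$ and $p$. Because $h_y \in \mathcal{F}_A$ meets $L_A$ in only two points it cannot contain $L_A$, so $c \neq L_A$; two distinct circles sharing two points span a unique sphere $t_A$, which contains $L_A$ and therefore lies in $\mathcal{R}_A$ by property (3) of screw parabolic pencils. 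Since $c \subset h_y \cap t_A$ and the latter is a circle, $c = h_y \cap t_A$ with $h_y \in \mathcal{F}_A$ and $t_A \in \mathcal{R}_A$, so $c \in \mathcal{K}_A$ directly from the definition of the half-turn bank.

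The argument for $c \in \mathcal{K}_B$ is the mirror image after interchanging the (symmetric) roles of $A$ and $B$: writing $h_x \cap L_B = \{y, q\}$, the hypothesis $h_x \cap L_B \subset h_y$ places $q$ on $c$, so $c$ meets $L_B$ in the two points $y$ and $q$, the pair $c, L_B$ spans a sphere $t_B \in \mathcal{R}_B$ containing $L_B$, and $c = h_x \cap t_B \in \mathcal{K}_B$. With $c \in \mathcal{K}_A \cap \mathcal{K}_B$, Corollary \ref{corollary:punchline} supplies $c_1 \in \mathcal{K}_A$ and $c_2 \in \mathcal{K}_B$ with $A = H_{c_1}H_c$ and $B = H_c H_{c_2}$, so $A$ and $B$ are linked; this is where the third hypothesis $h_x \neq h_y$ is also needed, to guarantee $c$ is a circle carrying a genuine half-turn.

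I expect the membership step $c \in \mathcal{K}_A$ to be the crux. The content of the hypothesis $h_y \cap L_A \subset h_x$ is precisely that it drags the second intersection point $p$ of $h_y$ with $L_A$ onto $c$, upgrading \emph{$c$ meets $L_A$} to \emph{$c$ meets $L_A$ in two points}, which is exactly what lets $c \cup L_A$ span a sphere of $\mathcal{R}_A$. The care points I would verify are that $h_y \cap L_A$ is truly two points (from $\mathcal{F}_A$--$\mathcal{D}_A$ duality), that $c \neq L_A$ so the span is a sphere and not a degenerate object, and that $h_y \cap t_A$ collapses to exactly $c$ rather than a larger circle. The orthogonality $h_y \perp t_A$ that the definition of $\mathcal{K}_A$ tacitly carries is automatic from property (1) of screw parabolic pencils, so it requires no separate check.
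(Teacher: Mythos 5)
Your proposal is correct and follows the paper's own argument: both take $c = h_x \cap h_y$, use the hypotheses to place the second intersection points of $L_A$ with $h_y$ and of $L_B$ with $h_x$ onto $c$, and conclude $c \in \mathcal{K}_A \cap \mathcal{K}_B$. You verify membership by explicitly producing the spheres $t_A \in \mathcal{R}_A$ and $t_B \in \mathcal{R}_B$ from the definition of the half-turn bank, whereas the paper appeals more tersely to orthogonality through two points (i.e.\ the geometric criterion of Theorem \ref{theorem:secretiffscrewparabolic}); this is a difference of bookkeeping, not of method, and your version supplies details the paper leaves implicit.
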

\begin{proof}
Let $c$ be $h_x\cap h_y$. Then $c$ is a circle containing $\left(h_y\cap L_A\right)\cup \left(h_x\cap L_B\right)$. It is orthogonal to $L_B$ and $L_A$ through two distinct points each. Hence, $ c \in \mathcal{K}_A \cap \mathcal{K}_B$. It follows that $A$ and $B$ are linked.
\end{proof}

\begin{theorem} [Condition 4] If $L_B \subset h_y$ and $L_A \subset h_x$, then $A$ and $B$ are linked.
\end{theorem}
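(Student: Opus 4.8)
The plan is to exhibit a single circle $c$ lying in $\mathcal{K}_A \cap \mathcal{K}_B$ and then invoke Corollary \ref{corollary:punchline}. The natural candidate, exactly as in the previous three conditions, is $c = h_x \cap h_y$. The entire content of the hypotheses $L_A \subset h_x$ and $L_B \subset h_y$ is that they let each of $h_x$ and $h_y$ play a \emph{second} role: although $h_x$ is defined as an element of $\mathcal{F}_B$ and $h_y$ as an element of $\mathcal{F}_A$, the hypotheses promote them to elements of the twisting pencil of the \emph{opposite} isometry.

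First I would record that $L_A = \partial\tau_A$ and $L_B = \partial\tau_B$. Since $h_x$ is a sphere in $\widehat{\mathbb{R}^3}$ containing $\partial\tau_A$, the listed property of the twisting pencil stating that any sphere containing $\partial P$ lies in $\mathcal{R}_\gamma$ gives $h_x \in \mathcal{R}_A$; symmetrically, $L_B \subset h_y$ forces $h_y \in \mathcal{R}_B$. Thus $h_x$ is simultaneously an element of $\mathcal{F}_B$ (by definition) and of $\mathcal{R}_A$, while $h_y$ is simultaneously in $\mathcal{F}_A$ and in $\mathcal{R}_B$. This role reversal, rather than any computation, is where the common factor will come from, and it is the step I would be most careful to justify cleanly.

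Next I would verify that $c = h_x \cap h_y$ is a genuine circle. Because $h_y \in \mathcal{F}_A$ and $h_x \in \mathcal{R}_A$, the property that every element of $\mathcal{R}_\gamma$ is orthogonal to every element of $\mathcal{F}_\gamma$ shows that $h_x$ and $h_y$ are orthogonal spheres; orthogonal spheres are necessarily distinct and meet in a circle, so $c$ is well defined. (As a sanity check one can note $x,y \in c$: every element of $\mathcal{F}_A$ passes through $x$ and every element of $\mathcal{F}_B$ passes through $y$, so both fixed points lie on $h_x \cap h_y$.)

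Finally I would read off membership in both half-turn banks straight from the definition of $\mathcal{K}_\gamma$ for a screw parabolic isometry. Writing $c = h_y \cap h_x$ with $h_y \in \mathcal{F}_A$ and $h_x \in \mathcal{R}_A$ exhibits $c \in \mathcal{K}_A$; writing $c = h_x \cap h_y$ with $h_x \in \mathcal{F}_B$ and $h_y \in \mathcal{R}_B$ exhibits $c \in \mathcal{K}_B$. Hence $c \in \mathcal{K}_A \cap \mathcal{K}_B$ is nonempty, and by Corollary \ref{corollary:punchline} the pair $A, B$ is linked. The only real subtlety is the double bookkeeping of $h_x$ and $h_y$, and the hypotheses of Condition 4 are designed precisely to supply it.
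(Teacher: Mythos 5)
Your proposal is correct and follows essentially the same route as the paper: both identify $c = h_x \cap h_y$ as the common element, using the property that a sphere containing $\partial\tau_A$ (resp. $\partial\tau_B$) lies in $\mathcal{R}_A$ (resp. $\mathcal{R}_B$) to give $h_x$ and $h_y$ their second roles. Your extra check that $c$ is a genuine circle (via orthogonality of $\mathcal{F}$- and $\mathcal{R}$-elements) is a small refinement the paper omits, but the argument is otherwise identical.
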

\begin{proof} Since $L_B \subset h_y$, $h_y \in \mathcal{R}_B$. Let $c=h_y\cap h_x$. Then $c\in \mathcal{K}_B$. Similarly, $L_A\subset h_x$ implies that $h_x\in \mathcal{R}_A$ so $c \in \mathcal{K}_A$. Thus $A$ and $B$ are linked.
\end{proof}

Let $A$ and $B$ be pure loxodromic isometries of $\mathbb{H}^4$ with axes $\mathrm{Ax}_A$, $\mathrm{Ax}_B$ and twisting planes $\tau_A$, $\tau_B$ respectively. Assume $\mathrm{Ax}_A$ and $\mathrm{Ax}_B$ are disjoint. There is a unique line $N$ perpendicular to both $\mathrm{Ax}_A$ and $\mathrm{Ax}_B$. There are hyperplanes $h_a$ orthogonal to $\mathrm{Ax}_A$ through $a$, and $h_b$ orthogonal to $\mathrm{Ax}_B$ through $b$. There are also lines $L_a\subset \tau_A$ perpendicular to $\mathrm{Ax}_A$ through $a$, and $L_b\subset \tau_B$ perpendicular to $\mathrm{Ax}_B$ through $b$. Like the previous case, $A$ and $B$ are rarely linked.
\begin{theorem} If $L_a$ and $L_b$ are coplanar, then $A$ and $B$ are linked.
\end{theorem}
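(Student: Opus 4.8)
The plan is to exhibit a single plane $Q$ lying in both half-turn banks $\mathcal{K}_A$ and $\mathcal{K}_B$, and then invoke Corollary \ref{corollary:punchline}. The natural candidate is the plane $Q$ containing the coplanar lines $L_a$ and $L_b$. First I would record that, since $a\in L_a\subset Q$ and $b\in L_b\subset Q$, the plane $Q$ contains the line through $a$ and $b$, which is precisely the common perpendicular $N$ (here $a\neq b$ because $\mathrm{Ax}_A$ and $\mathrm{Ax}_B$ are disjoint). Hence $Q=\mathrm{span}(L_a,N)=\mathrm{span}(L_b,N)$, provided $N\neq L_a$ and $N\neq L_b$; the degenerate possibilities $N=L_a$ or $N=L_b$ place $N$ inside a twisting plane and are handled separately, exactly as in the loxodromic--hyperbolic Condition 1.

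Next I would verify $\partial Q\in\mathcal{K}_A$ using the geometric characterization of Theorem \ref{theorem:secretiffpureloxodromic}: I must check that $Q$ is orthogonal to both $\tau_A$ and $\mathrm{Ax}_A$ and meets $\tau_A$ in a line. Working in the tangent space at $a$, let $e_1$ point along $\mathrm{Ax}_A$ and $e_2$ along $L_a$, so that $\tau_A=\mathrm{span}(e_1,e_2)$ and the hyperplane $h_a$ orthogonal to $\mathrm{Ax}_A$ at $a$ is $\mathrm{span}(e_2,e_3,e_4)$. Both $L_a$ and $N$ are perpendicular to $\mathrm{Ax}_A$ at $a$, so $Q=\mathrm{span}(L_a,N)\subset h_a$ and therefore $Q\perp\mathrm{Ax}_A$. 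Moreover $Q\cap\tau_A=L_a$ is a line: the intersection of two distinct planes in $\mathbb{H}^4$ is at most a line, it contains $L_a$, and $Q\neq\tau_A$ since $\tau_A$ contains $\mathrm{Ax}_A$ while $Q\perp\mathrm{Ax}_A$.

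The step I expect to be the main obstacle is the orthogonality $Q\perp\tau_A$, because $N$ is only known to be perpendicular to $\mathrm{Ax}_A$ and need not be perpendicular to $L_a$. The resolution is an orthogonal-decomposition observation: writing the direction $n$ of $N$ at $a$ as $n=\alpha e_2+v$ with $v\in\mathrm{span}(e_3,e_4)$, one has $Q=\mathrm{span}(e_2,n)=\mathrm{span}(e_2,v)$, where $v\neq 0$ precisely because $N\neq L_a$. Since $v\perp\tau_A$, the transverse direction of $Q$ along the line $L_a$ is orthogonal to $\tau_A$, so $Q\perp\tau_A$ through $L_a$. By Theorem \ref{theorem:secretiffpureloxodromic} this yields $\partial Q\in\mathcal{K}_A$. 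Repeating the identical argument with $\tau_B$, $\mathrm{Ax}_B$, $b$, $L_b$ in place of $\tau_A$, $\mathrm{Ax}_A$, $a$, $L_a$ gives $\partial Q\in\mathcal{K}_B$. Therefore $\partial Q\in\mathcal{K}_A\cap\mathcal{K}_B$, and Corollary \ref{corollary:punchline} concludes that $A$ and $B$ are linked.
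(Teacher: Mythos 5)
Your proposal is correct and follows essentially the same route as the paper: take the plane spanned by the coplanar lines $L_a$ and $L_b$, observe that it contains the common perpendicular $N$ (since it contains $a$ and $b$), check that it is orthogonal to each axis and each twisting plane through $L_a$ and $L_b$ respectively, and conclude that its boundary lies in $\mathcal{K}_A\cap\mathcal{K}_B$. You supply details the paper leaves implicit (the explicit appeal to Theorem \ref{theorem:secretiffpureloxodromic}, the tangent-space verification of $Q\perp\tau_A$, and the degenerate cases $N=L_a$ or $N=L_b$, which the paper dismisses by asserting the three lines are distinct), but the underlying argument is the same.
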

Since $\tau_A$ and $\tau_B$ are disjoint, the lines $N$, $L_a$,
and $L_b$ are distinct. The plane $P$ containing $L_a$ and $L_b$
is unique. Since $N$ connects $a$ and $b$, $N$ lies in $P$. The
axis of $A$ is perpendicular to both $N$ and $L_a$ so $\tau_A$ is
orthogonal to $P$ through $N$. Likewise, $\tau_B$ is orthogonal
to $P$ through $N$. Then $\partial P \in \mathcal{K}_A \cap \mathcal{K}_B$.
\begin{proof}

\end{proof}

Let $A$ be screw parabolic isometry of $\mathbb{H}^4$ fixing $x$ and with twisting plane $\tau_A$. Let $B$ be a pure loxodromic isometry of $\mathbb{H}^4$ with axis $\mathrm{Ax}_B$ and twisting plane $\tau_B$. Suppose $x$ does not bound $\mathrm{Ax}_B$. There is a unique $h_x \in \mathcal{F}_B$ that contains $x$. There is also $d_x\in \mathcal{D}_B$ containing $x$. Let $x_2$ be the intersection of $d_x$ with $h_x$ other than $x$. Since $x\neq x_2$, there are unique elements $d_2\in \mathcal{D}_A$ and $h_2 \in \mathcal{F}_A$ containing $x_2$.
\par Let $L_A$ be the boundary of $\tau_A$ and $L_B$ be that of $\tau_B$. Since $L_A \in \mathcal{D}_A$, $L_A\cap h_2$ has exactly two points. Let $a$ be the element of $L_A\cap h_2$ other than $x$. Similarly, let $b_1$ and $b_2$ be the elements of $L_B\cap h_x$. Throughout this case, $A$, $B$, $\tau_A$, $\tau_B$, $\mathrm{Ax}_B$, $x$, $h_x$, $x_2$, $d_x$, $d_2$, $h_2$, $L_A$, $L_B$, $b_1$ and $b_2$ are used consistently.
\begin{theorem} [Condition 1]
If there is an $h \in \mathcal{F}_A \cap \mathcal{F}_B$ and the points of $h\cap\left(L_A \cup L_B\right)$ form a circle, then $A$ and $B$ are linked.
\end{theorem}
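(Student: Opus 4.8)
The plan is to produce a single circle $c$ that lies in both $\mathcal{K}_A$ and $\mathcal{K}_B$, and then quote Corollary \ref{corollary:punchline}. The obvious candidate is the circle furnished by the hypothesis, the one carrying the points of $h\cap(L_A\cup L_B)$. First I would pin down those points. Since $L_A\in\mathcal{D}_A$ and $h\in\mathcal{F}_A$, the set $h\cap L_A$ consists of exactly two points, one of which is the fixed point $x$; write $h\cap L_A=\{x,p\}$. Likewise $L_B=\partial\tau_B\in\mathcal{D}_B$ (the twisting plane of a loxodromic contains its axis) and $h\in\mathcal{F}_B$, so $h\cap L_B=\{q_1,q_2\}$. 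Hence $c$ is the circle through $\{x,p,q_1,q_2\}$, and because all of these points lie on the sphere $h$, the circle they determine satisfies $c\subset h$.

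The heart of the argument is to realize $c$ as an intersection $h\cap t$ drawn from each bank. For $\mathcal{K}_A$: the circles $c$ and $L_A$ share the two points $x$ and $p$, so they span a sphere $t_A$ containing both (a plane, i.e. a sphere through $\infty$, in the coplanar degenerate case). Since $t_A\supseteq L_A=\partial\tau_A$, the property that any sphere containing the boundary of the twisting plane belongs to the twisting pencil gives $t_A\in\mathcal{R}_A$. Moreover $h\neq t_A$, because an element of $\mathcal{F}_A$ cannot contain $L_A$ (they meet in only two points), so $h\cap t_A$ is a circle; as $c\subseteq h\cap t_A$, equality of circles forces $c=h\cap t_A$. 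By the definition $\mathcal{K}_A=\{s\cap t:s\in\mathcal{F}_A,\ t\in\mathcal{R}_A\}$ this shows $c\in\mathcal{K}_A$. The identical argument with $L_B$ replacing $L_A$ produces $t_B\in\mathcal{R}_B$ with $c=h\cap t_B$, so $c\in\mathcal{K}_B$.

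With $c\in\mathcal{K}_A\cap\mathcal{K}_B$, Corollary \ref{corollary:punchline} immediately gives that $A$ and $B$ are linked; concretely $H_c$ is the common involution, and Theorem \ref{theorem:revisedkaran} supplies $\alpha\in\mathcal{K}_A$ and $\beta\in\mathcal{K}_B$ with $A=H_\alpha H_c$ and $B=H_c H_\beta$. Note that I never need to verify directly that $c$ is \emph{orthogonal} to $L_A$ or $L_B$; the orthogonality guaranteed by the geometric criteria (Theorems \ref{theorem:secretiffscrewparabolic} and \ref{theorem:secretiffpureloxodromic}) follows a posteriori from membership in the banks, since every element of $\mathcal{F}$ is automatically orthogonal to every element of the corresponding twisting pencil.

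The step I expect to be the main obstacle is the existence of the common spheres $t_A$ and $t_B$: one must justify that two distinct circles sharing two points always lie on a common sphere, and then treat the degenerate configurations --- when the four points collapse to three, when $c$ and $L_A$ (or $L_B$) are coplanar so that $t_A$ is a plane (still a legitimate element of $\mathcal{R}_A$), and when an intersection point coincides with $x$ or is shared between $L_A$ and $L_B$. This is precisely the content that is used tacitly behind the phrases ``$c\subset h$'' and ``$c\in\mathcal{K}_A$'' in the earlier Condition~1 proofs, and it is also what makes the hypothesis that the points of $h\cap(L_A\cup L_B)$ ``form a circle'' a genuine restriction rather than an automatic one, since four points of $\widehat{\mathbb{R}^3}$ are in general not concyclic.
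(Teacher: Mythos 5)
Your proposal is correct and follows the same overall strategy as the paper: both identify the candidate circle $c$ through the four points of $h\cap(L_A\cup L_B)$ (the paper first notes $h=h_x=h_2$ so that these points are exactly $\{x,a,b_1,b_2\}$ in its standing notation), observe that $c\subset h$ because three of its points already lie on the sphere $h$, and then conclude $c\in\mathcal{K}_A\cap\mathcal{K}_B$. The one genuine difference is how membership in the banks is certified. The paper asserts that $c$ is orthogonal to $L_A$ through $\{x,a\}$ and to $L_B$ through $\{b_1,b_2\}$ (which follows because $h$ meets $L_A\in\mathcal{D}_A$ and $L_B\in\mathcal{D}_B$ orthogonally) and then appeals, implicitly, to the geometric criteria of Theorems \ref{theorem:secretiffscrewparabolic} and \ref{theorem:secretiffpureloxodromic}. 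You instead exhibit $c$ directly in the form $s\cap t$ demanded by the definitions: the sphere $t_A$ spanned by the two circles $c$ and $L_A$ (which are distinct and share the two points $x$ and $p$) contains $\partial\tau_A$ and hence lies in $\mathcal{R}_A$ by the listed pencil property, and $c=h\cap t_A$; similarly for $B$. Your route is slightly longer but self-contained from the definitions and the stated properties of pencils, and it correctly sidesteps the orthogonality verification; the paper's route is shorter but leans on the alternative geometric characterizations. The degenerate configurations you flag (points coinciding, $c$ and $L_A$ coplanar, or $c=L_A$ when $L_A$ and $L_B$ share two points) are likewise passed over silently in the paper's proof, so raising them is a point in your favor rather than a gap relative to the source.
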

\begin{proof}
Since $x\in h$, $h$ is the unique element of $\mathcal{F}_B$ containing $x$. That is $h=h_x$. It follows that $x_2\in h$ and $h$ is the unique element of $\mathcal{F}_A$ containing $x_2$. So $h=h_x=h_2$. Hence, \begin{align*}
h\cap\left(L_A \cup L_B\right) &= \left(h\cap L_A\right)\cup \left(h\cap L_B\right) \\
\nonumber &= \left(h_2\cap L_A\right)\cup \left(h_x\cap L_B\right) \\
\nonumber &= \left\{ x,a,b_1,b_2 \right\}.
\end{align*}
Let $c$ be the circle containing $\left\{ x,a,b_1,b_2 \right\} $ according to the hypothesis. Then $c\subset h$ since $\left\{ x,a,b_1,b_2 \right\} \subset h$. It is orthogonal to $L_A$ through $\{x,a\}$ and to $L_B$ through $\{b_1,b_2\}$. Therefore, $c \in \mathcal{K}_A \cap \mathcal{K}_B$. It implies that $A$ and $B$ are linked.
\end{proof}

\begin{theorem} [Condition 2] If $a\in h_x$; $b_1,b_2\in h_2$; and $h_x\neq h_2$, then $A$ and $B$ are linked.
\end{theorem}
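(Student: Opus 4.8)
The plan is to produce a single circle $c$ lying in $\mathcal{K}_A\cap\mathcal{K}_B$ and then conclude that $A$ and $B$ are linked by Corollary~\ref{corollary:punchline}. The candidate is $c=h_x\cap h_2$. Since $h_x\neq h_2$ by hypothesis and the two spheres share the point $x$, the intersection $c$ is a genuine circle. The first step is to verify that $c$ contains all five points $x$, $x_2$, $a$, $b_1$, $b_2$. Indeed $x\in h_x$ by the definition of $h_x$, and $x\in h_2$ because every element of $\mathcal{F}_A$ passes through the fixed point of the parabolic $A$; $x_2\in h_x$ as a point of $d_x\cap h_x$, and $x_2\in h_2$ since $h_2$ is the element of $\mathcal{F}_A$ through $x_2$; $a\in h_2$ by definition and $a\in h_x$ by the first clause of the hypothesis; finally $b_1,b_2\in h_x$ by definition and $b_1,b_2\in h_2$ by the second clause. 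Hence $c$ passes through these five points, so $c\cap L_A\supseteq\{x,a\}$ and $c\cap L_B\supseteq\{b_1,b_2\}$. Because $a$ is the only point of $L_A\cap h_2$ besides $x$, and $b_1,b_2$ are the only points of $L_B\cap h_x$, neither $L_A$ nor $L_B$ is contained in $c\subset h_2\cap h_x$; thus $c\neq L_A$ and $c\neq L_B$, and $c$ meets each of $L_A,L_B$ in exactly two points.

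Next I would place $c$ in each bank directly from the definitions $\mathcal{K}_A=\{s\cap t:s\in\mathcal{F}_A,\ t\in\mathcal{R}_A\}$ and $\mathcal{K}_B=\{s\cap t:s\in\mathcal{F}_B,\ t\in\mathcal{R}_B\}$. For $\mathcal{K}_B$, take $s=h_x\in\mathcal{F}_B$, which contains $c$, and let $t$ be the unique sphere through $L_B$ and a point $p$ of $c$ not lying on $L_B$ (such a $p$ exists because $c\neq L_B$). Since $t$ contains $\partial\tau_B=L_B$, the twisting-pencil property that any sphere through $\partial P$ belongs to $\mathcal{R}_\gamma$ gives $t\in\mathcal{R}_B$. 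The circles $c$ and $h_x\cap t$ both lie in $h_x$ and share the three distinct points $b_1$, $b_2$ (in $L_B\subset t$) and $p$, so they coincide and $c=h_x\cap t\in\mathcal{K}_B$. Symmetrically, for $\mathcal{K}_A$ take $s=h_2\in\mathcal{F}_A$ and let $t'$ be the sphere through $L_A$ and a point $q$ of $c$ off $L_A$; then $t'\in\mathcal{R}_A$, and $h_2\cap t'$ shares the three points $x$, $a$, $q$ with $c$, whence $h_2\cap t'=c\in\mathcal{K}_A$. Therefore $c\in\mathcal{K}_A\cap\mathcal{K}_B$ and $A,B$ are linked by Corollary~\ref{corollary:punchline}.

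The computation is routine, and pleasantly no orthogonality check is needed: working from the circle-intersection definition of the banks lets the twisting-pencil membership of the auxiliary spheres $t,t'$ carry that information automatically. The step demanding the most care is the pair of three-common-point arguments showing $h_x\cap t=c$ and $h_2\cap t'=c$ exactly rather than merely as containments. Two distinct circles in $\widehat{\mathbb{R}^3}$ meet in at most two points, so equality follows once the three chosen points are genuinely distinct; this is precisely where the first paragraph's observation that $c\neq L_A$ and $c\neq L_B$ does the real work, since it guarantees that each of $L_A,L_B$ meets $c$ in exactly two points and thus leaves a third point of $c$ free to pin down the auxiliary sphere. I would double-check those distinctness statements against the standing assumptions that $x$ is not the fixed point of $B$ and that $h_x\neq h_2$.
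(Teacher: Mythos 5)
Your proof is correct and takes essentially the same route as the paper: both arguments produce the circle $c=h_x\cap h_2$ and place it in $\mathcal{K}_A\cap\mathcal{K}_B$. The only difference is how membership is certified --- the paper just notes that $c$ is orthogonal to $L_A$ through $\{x,a\}$ and to $L_B$ through $\{b_1,b_2\}$ and appeals to the criteria of Section~\ref{subsection:geometriccriteria}, while you exhibit explicit pairs $(h_2,t')\in\mathcal{F}_A\times\mathcal{R}_A$ and $(h_x,t)\in\mathcal{F}_B\times\mathcal{R}_B$ whose intersections equal $c$; your definition-level verification is, if anything, slightly more complete, since it bypasses the unstated check that the plane bounded by $c$ also meets $\mathrm{Ax}_B$.
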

\begin{proof}
Let $c=h_x\cap h_2$. Then $c$ is a circle since $h_x\neq h_2$. It is orthogonal to $L_A$ through $\{a,x\}$ and to $L_B$ through $\{b_1,b_2\}$. Thus, $c \in \mathcal{K}_A \cap \mathcal{K}_B$ so $A$ and $B$ are linked.
\end{proof}

\begin{theorem} [Condition 3]
If there are $p_A\in \mathcal{F}_A$ and $p_B\in \mathcal{F}_B$ such that $L_B\subset p_A$ and $L_A\subset p_B$, then $A$ and $B$ are linked.
\end{theorem}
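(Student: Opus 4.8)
The plan is to exhibit a single circle lying simultaneously in $\mathcal{K}_A$ and $\mathcal{K}_B$ and then invoke Corollary \ref{corollary:punchline}. The natural candidate is $c = p_A \cap p_B$, and the whole argument reduces to recognizing $c$ as an intersection of a permuted-pencil element with a twisting-pencil element on each of the two sides, with the roles of the two spheres exchanged.

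First I would upgrade the containment hypotheses to pencil membership using the listed property of twisting pencils stating that any sphere containing $\partial P$ belongs to $\mathcal{R}$. Since $p_B$ is a sphere containing $L_A = \partial \tau_A$, this property applied to $A$ gives $p_B \in \mathcal{R}_A$. Symmetrically, since $p_A$ contains $L_B = \partial \tau_B$, the same property applied to $B$ gives $p_A \in \mathcal{R}_B$. Thus each of $p_A, p_B$ plays a permuted role for one isometry and a twisting role for the other, namely $p_A \in \mathcal{F}_A \cap \mathcal{R}_B$ and $p_B \in \mathcal{F}_B \cap \mathcal{R}_A$.

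Next I would confirm that $c = p_A \cap p_B$ is a genuine circle. By the listed property that every element of $\mathcal{R}_A$ is orthogonal to every element of $\mathcal{F}_A$, applied to $p_B \in \mathcal{R}_A$ and $p_A \in \mathcal{F}_A$, the spheres $p_A$ and $p_B$ are orthogonal; orthogonal spheres are distinct and meet in a circle. With $c$ in hand, the definition of the half-turn bank settles both memberships at once: reading $c = p_A \cap p_B$ with $p_A \in \mathcal{F}_A$ and $p_B \in \mathcal{R}_A$ shows $c \in \mathcal{K}_A$, while reading $c = p_B \cap p_A$ with $p_B \in \mathcal{F}_B$ and $p_A \in \mathcal{R}_B$ shows $c \in \mathcal{K}_B$. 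Hence $c \in \mathcal{K}_A \cap \mathcal{K}_B$, and by Corollary \ref{corollary:punchline} the pair $A, B$ is linked by half-turns.

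I do not expect a serious obstacle: the content lies entirely in the bookkeeping of which pencil each of $p_A, p_B$ inhabits and in the symmetric swap of the $\mathcal{F}/\mathcal{R}$ roles between the two isometries. The only step needing a moment's care is verifying that $c$ is an honest circle rather than empty or a whole sphere, which the orthogonality supplied by the first twisting-pencil property resolves immediately.
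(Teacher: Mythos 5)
Your proposal is correct and follows essentially the same route as the paper's proof: both promote $p_B$ to $\mathcal{R}_A$ and $p_A$ to $\mathcal{R}_B$ via the containment of $L_A$ and $L_B$, and then read $c=p_A\cap p_B$ as an $\mathcal{F}\times\mathcal{R}$ intersection for each isometry before invoking Corollary \ref{corollary:punchline}. Your extra remark that the orthogonality of $\mathcal{F}$- and $\mathcal{R}$-elements guarantees $p_A\neq p_B$ and that $c$ is a genuine circle is a welcome bit of added care, but it is not a different argument.
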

\begin{proof}
Since $L_A\subset p_B$, we have $p_B\in \mathcal{R}_A$, so $p_B\neq p_A$. Let $c=p_B\cap p_A$. Then $c\in \mathcal{K}_A$ as it is an intersection of a pair in $\mathcal{F}_A\times \mathcal{R}_A$. Likewise, $L_B\subset p_A$ implies that $p_A\in \mathcal{R}_B$, so $c$ is also in $\mathcal{K}_B$. Hence $A$ and $B$ are linked.
\end{proof}

\section{Pairs without invariant subplane} \label{section:disproveara}

\par It is possible for a pair to be linked by half-turns without having invariant lower dimensional planes. This section shows how to construct a linked pair that does not have invariant lower dimensional space. It can be started by investigating individual isometries then proving that the only invariant planes of an isometry are those that are bounded by the elements of its invariant pencil (if such exist), twisting pencil (if the rotational part is an involution), axis, twisting plane or twisting hyperplane  (the hyperplane orthogonal to the twisting plane through the axis).
\par One way to find all invariant subplanes is to express an isometry of $\mathbb{H}^n$ as a matrix in $\mathrm{PSO}(n,1)$. The eigenvectors correspond to fixed points if they are not space-like in the Lorentzian space
$\mathbb{R}^{n,1}$. The following are examples of hyperbolic isometries.
\begin{align*}
\begin{bmatrix}
1&0&0&0&0 \\
0&1&0&0&0 \\
0&0&1&0&0 \\
0&0&0&\sqrt{2}&1 \\
0&0&0&1&\sqrt{2}
\end{bmatrix}
 \text{ and } &
\begin{bmatrix}
\cos \theta & -\sin \theta &0&0&0\\
\sin \theta & \cos \theta &0&0&0\\
0&0&1&0&0 \\
0&0&0&\sqrt{2}&1 \\
0&0&0&1&\sqrt{2}
\end{bmatrix}
\end{align*}
The upper left
$3\times 3$ entries can be replaced by any nontrivial element of
$\mathrm{SO}(3)$ and the resulting matrix is pure loxodromic. The construction shows that the invariant lower dimensional subspaces are exactly the axis, twisting plane, twisting hyperplane, and the elements of the invariant and dual pencils. Moreover, the permuted and dual pencils can also be located.
\par For elliptic isometries, the upper left
$4\times 4$ entries of the identity matrix
$\mathrm{Id}_5$ can be replaced by an element of
$\mathrm{O}(4)$ and the resulting matrix is an elliptic element. Analyzing the invariant subspaces is a matter of studying $\mathrm{O}(4)$ and $\mathrm{SO}(4)$.


\subsection{Parabolic isometries of $\mathrm{PSO}(n,1)$}
A parabolic element of $\mathrm{PSO}(2,1)$ that fixes the light-like vector
$(0,1,1)$ has the form
\[
P_2= \begin{bmatrix}
1&-t&t\\
t&1-\frac{t^2}{2}&t^2/2\\
t&-t^2/2&1+\frac{t^2}{2}
\end{bmatrix}  \text{ where } t \neq 0.
 \]
It can be decomposed into Jordan canonical form
$P_2=S_2J_2S_2^{-1}$ where
\[
S_2=
\begin{bmatrix}
0&t&-t/2\\
t^2&0&-1\\
t^2&0&0
\end{bmatrix}
 \text{ and }  J_2 =
\begin{bmatrix}
1&1&0\\
0&1&1\\
0&0&1
\end{bmatrix}.
\]
The form of
$P_2$ can be extended to
$\mathrm{PSO}(3,1)$ using
$x,y \in \mathbb{R}$ with
$x^2+y^2>0$. Specifically, let
\begin{align*}
P_3&=\begin{bmatrix}
1&0&-x&x\\
0&1&-y&y\\
x&y&1-\frac{x^2+y^2}{2}&\frac{x^2+y^2}{2}\\
x&y&-\frac{x^2+y^2}{2}&1+\frac{x^2+y^2}{2}
\end{bmatrix}.
\end{align*}
Then
$P_3$ fixes
$(0,0,1,1)$ and has Jordan decomposition
$P_3=S_3J_3S_3^{-1}$ where
\[S_3=
\begin{bmatrix}
-\frac{xy}{x^2+y^2}&0&x&0\\
-\frac{x^2}{x^2+y^2}&0&y&0\\
-y/2&x^2+y^2&\frac{x^2+y^2}{2}&0\\
-y/2&x^2+y^2&\frac{x^2+y^2}{2}&1\\
\end{bmatrix} \text{ and }
 J_3=
\begin{bmatrix}
1&0&0&0\\
0&1&1&0\\
0&0&1&1\\
0&0&0&1
\end{bmatrix}.
\]
Notice that
$P_3$ has a space-like eigenvector whereas
$P_2$ has only light-like eigenvector. If
$v_1$ and
$v_2$ are the second and third column vectors of
$S_3$, then
$P_3$ maps any vector
$v \in \mathbb{R}^{3,1}$ into a linear combination of
$v$, $v_1$ and $v_2$. In particular if
$v$ is time-like, then
$P_3$ leaves
$\mathrm{span}\{v,v_1,v_2\}$ invariant. Hence the $3$-dimensional time-like vector subspaces that contain
$v_1$ and $v_2$ project to the hyperbolic planes that form the invariant pencil of
$P_3$.
\par The extension of
$P_3$ to
$\mathrm{PSO}(4,1)$ is
$P_4$ defined given by
\begin{align*}
P_4&=\begin{bmatrix}
1&0&0&-x&x\\
0&1&0&-y&y\\
0&0&1&-z&z\\
x&y&z&1-\frac{x^2+y^2+z^2}{2}&\frac{x^2+y^2+z^2}{2}\\
x&y&z&-\frac{x^2+y^2+z^2}{2}&1+\frac{x^2+y^2+z^2}{2}
\end{bmatrix}.
\end{align*}
The Jordan decomposition of $P_4$ is $S_4J_4S^{-1}_4$ where $S_4=$
\[
\begin{bmatrix}
 0& -\frac{xy}{x^2+y^2+z^2} & 0 &x &0 \\
 -z/y & \frac{x^2+z^2}{x^2+y^2+z^2} & 0 &y &0 \\
 1& -\frac{yz}{x^2+y^2+z^2} & 0 &z &0 \\
0&-y/2 & x^2+y^2+z^2 & \frac{x^2+y^2+z^2}{2} &0 \\
0&-y/2 & x^2+y^2+z^2 & \frac{x^2+y^2+z^2}{2} &1
\end{bmatrix} \text{ and } \] \[
 J_4=
\begin{bmatrix}
1&0 &0 &0 &0 \\
0&1 &0 &0 &0 \\
0&0 & 1& 1&0 \\
0&0 &0 &1 &1 \\
0&0 & 0& 0& 1
\end{bmatrix}.
\]

Then
$P_4$ has two space-like eigenvectors which correspond to the first two columns of
$S_4$. Let
$v_1$ and $v_2$ be the third and fourth columns of
$S_4$. Like
$P_3$, any vector
$v \in \mathbb{R}^{4,1}$ is mapped by
$P_4$ into a linear combination of
$v$, $v_1$ and $v_2$. Thus, the invariant pencil of $P_4$  is precisely the set of intersections of the hyperboloid
$\mathbb{H}^4$ with the
$4$-dimensional time-like vector subspaces containing
$\{v_1,v_2\}$. The dual pencil of
$P_4$ can be located by the
$3$-dimensional vector time-like subspaces containing $\{v_1,v_2\}$.
\par Let
$v_3$ be any time-like vector in
$\mathbb{R}^{4,1}$ and
$\rho$ be an element of $\mathrm{SO}(5)$ that fixes $\mathrm{span}\{v_1,v_2,v_3\}$ pointwise. The Lorentz orthogonal complement of $\mathrm{span}\{v_1,v_2,v_3\}$ is two dimensional space-like that is rotated by
$\rho$. Then
$\rho\in\mathrm{PSO}(4,1)$ and
$\rho P_4$ is a screw parabolic isometry of $\mathrm{PSO}(4,1)$. If the rotation angle of $\rho$ is not an integer multiple of $\pi$, then only the scalar multiples of
$(0,0,0,1,1)$ are the eigenvectors of
$\rho P_4$. Hence the only time-like subspace left invariant by
$\rho P_4$ is the span of
$\{v_1,v_2,v_3\}$. The other invariant subspaces of
$\rho P_4$ are either light-like or space-like.
\subsection{Mismatched invariant planes} \label{subsection:lorentzpencils}
\par If two isometries have no common invariant line, plane or hyperplane, they can still be linked. The elements of the half-turn bank of an isometry are not invariant circles/planes, so a pair of isometries can still have a common element in their half-turn banks.
\begin{theorem} \label{theorem:disprovearasorry}
There exists half-turn linked pair of isometries of $\mathbb{H}^4$ that leaves neither a point, a line, a plane nor a hyperplane invariant.
\end{theorem}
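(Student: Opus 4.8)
The plan is to exhibit one explicit pair $A,B$ and then verify two features independently: that the pair is linked, and that no totally geodesic subspace of $\mathbb{H}^4$ (nor any point of its closure) is invariant under both. Linking will be obtained from Corollary~\ref{corollary:punchline}, which reduces the property to the nonemptiness of $\mathcal{K}_A\cap\mathcal{K}_B$, while the absence of a common invariant subspace will be read off from the eigenvector/Jordan analysis of Section~\ref{section:disproveara}, which catalogues every invariant subspace of each isometry.

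Concretely I would take $A$ to be the screw parabolic isometry fixing $\infty$ whose twisting plane is bounded by the $z$-axis of $\widehat{\mathbb{R}^3}$, say $A\colon x\mapsto A_0x+(0,0,1)$ with $A_0\in\mathrm{SO}(3)$ a rotation about the $z$-axis through an angle $\theta\notin\pi\mathbb{Z}$, and $B$ to be the pure hyperbolic isometry whose axis has endpoints $v=(1,0,0)$ and $w=(-1,0,0)$. Since $v$ and $w$ are equidistant from the $z$-axis, the Computational Condition of Section~\ref{section:conditions} applies directly and produces a circle in $\mathcal{K}_A\cap\mathcal{K}_B$; by Corollary~\ref{corollary:punchline} the pair $A,B$ is therefore linked by half-turns.

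It then remains to rule out a jointly invariant point, line, plane, or hyperplane, and here I would invoke the classification of Section~\ref{section:disproveara}. Because the rotation angle $\theta$ of $A$ is not a multiple of $\pi$, rotating the space-like eigenvectors of the underlying pure parabolic destroys them, so the only eigenvector of $A$ (as an element of $\mathrm{PSO}(4,1)$) is the light-like vector corresponding to $\infty$, and the only time-like invariant subspace is the $3$-dimensional $\mathrm{span}\{v_1,v_2,v_3\}$ projecting to the twisting plane $\tau_A$. Consequently $A$ has no invariant geodesic and no invariant hyperplane, its only invariant plane is $\tau_A$, and its only invariant boundary point is $\infty$. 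The pure hyperbolic $B$, by contrast, fixes only $v$ and $w$, its unique invariant geodesic is its axis $L_B$, and its invariant planes and hyperplanes are exactly those containing $L_B$. Comparing the lists: the fixed-point sets $\{\infty\}$ and $\{v,w\}$ are disjoint; $A$ admits no invariant line or hyperplane at all; and a common invariant plane would have to equal $\tau_A$ and also contain $L_B$, which fails because $v,w$ lie off the $z$-axis, so $L_B\not\subset\tau_A$. Hence no subspace is invariant under both, which establishes Theorem~\ref{theorem:disprovearasorry}.

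The delicate point, and the reason the statement is not automatic, is that the two requirements pull against one another: linking demands the geometric coincidence that the half-turn banks share a circle, whereas having no common invariant subspace demands that the invariant flags of $A$ and $B$ be in general position. The observation reconciling them is exactly the content of Theorem~\ref{theorem:onepointone}: the elements of a half-turn bank are circles determined by fixed points and invariants but are \emph{not} themselves invariant under the isometry, so a shared bank element need not force any shared invariant subspace. The main verification to carry out carefully is therefore the enumeration of the invariant subspaces of the screw parabolic $A$, which rests on the fact that rotating the two space-like eigendirections of $P_4$ through an angle $\theta\notin\pi\mathbb{Z}$ leaves only the light-like fixed direction and the single time-like invariant $3$-space.
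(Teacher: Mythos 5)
Your proposal is correct in substance but follows a genuinely different route from the paper. The paper also argues by explicit example, but with a different pair: it takes $A$ to be a pure loxodromic isometry with axis endpoints $(0,0,\pm 1)$ and rotation angle $\pi/6$, and $B$ the pure parabolic $x\mapsto x+(1,0,1)$ (noting $B$ could instead be hyperbolic fixing $(1,0,1)$ and $(-1,0,-1)$), and it simply exhibits the common bank element directly --- the horizontal line through $(0,0,0)$ and $(0,1,0)$ --- before checking that the twisting plane, twisting hyperplane and axis of $A$ are not invariant under $B$. You instead take a screw parabolic $A$ and a pure hyperbolic $B$ and outsource the linking step to the Computational Condition of Section~\ref{section:conditions}; your enumeration of the invariant subspaces of $A$ and $B$ is more systematic than the paper's and is correct (a screw parabolic with rotation angle $\theta\notin\pi\mathbb{Z}$ indeed has no invariant geodesic or hyperplane and a unique invariant plane $\tau_A$, while the invariant planes and hyperplanes of a pure hyperbolic are exactly those containing its axis, which $\tau_A$ does not). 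One small wrinkle you should patch: with $v=(1,0,0)$ and $w=(-1,0,0)$ the midpoint $m$ is the origin, which lies \emph{on} the $z$-axis, so the circle $k_m$ constructed in the proof of the Computational Condition (``the horizontal line connecting $m$ to the $z$-axis'') is not uniquely determined and the perpendicularity computation degenerates to $0\cdot(v-m)=0$. The hypothesis of the cited theorem is still formally satisfied and the conclusion is true, but to be safe you should either perturb to, say, $v=(1,1,0)$, $w=(-1,1,0)$ (still equidistant from the $z$-axis, with $m=(0,1,0)$ off the axis), or simply verify directly that the $y$-axis bounds a plane lying in $\mathcal{K}_A\cap\mathcal{K}_B$: it is a horizontal line meeting the $z$-axis, hence in $\mathcal{K}_A$, and the vertical half-plane over it meets the geodesic from $v$ to $w$ orthogonally at its apex, hence it is in $\mathcal{K}_B$ by Theorem~\ref{theorem:secretiffpurehyperbolic}. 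With that repair your argument is a complete and valid alternative proof; what it buys over the paper's version is a cleaner, checkable classification of the invariant subspaces, at the cost of depending on a linking condition whose stated construction needs the non-degeneracy caveat.
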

\begin{proof}
Start with
$\widehat{\mathbb{R}^3}$ which is a model for the boundary of upper half-space
$\mathbb{H}^4$. Let $A$ be a loxodromic isometry that fixes $(0,0,1)$ and $(0,0,-1)$ but rotates the vertical line connecting them by $\pi/6$ angle. Let $B$ be the Poincar\'e extension of the function sending $x$ to $x+(1,0,1)$. Note that $B$ does not have to be pure parabolic; it can be a hyperbolic isometry fixing $(1,0,1)$ and $(-1,0,-1)$.
\par Then the horizontal line connecting $(0,0,0)$ to $(0,1,0)$ is an element of $\mathcal{K}_A \cap \mathcal{K}_B$. The invariant planes of $B$ do not include the twisting plane of $A$. Also the unit sphere, which bounds the twisting hyperplane of $A$, is not left invariant by $B$ since it does not contain the fixed point of $B$. There is no invariant hyperbolic line either as the fixed points of $A$ and $B$ do not match.
\end{proof}

\section{Acknowledgement} The author thanks Ara Basmajian and Karan Puri who shared significant amount of their time providing productive insights and also his
thesis adviser, Jane Gilman.
Some of the results here are from the author's Ph.D.~thesis. The author thanks the Rutgers-Newark Mathematics Department for its support during the course of his graduate studies and the Graduate School Newark for a Dissertation Fellowship.

\begin{bibdiv}
\begin{biblist}
\bib{arabernard}{article}{
   author={Basmajian, Ara},
   author={Maskit, Bernard},
   title={Space form isometries as commutators and products of involutions},
   journal={Trans. Amer. Math. Soc.},
   volume={364},
   date={2012},
   number={9},
   pages={5015--5033},
   issn={0002-9947},
   review={\MR{2922617}},
   doi={10.1090/S0002-9947-2012-05639-X},
}

\bib{arakaran}{article}{
   author={Basmajian, Ara},
   author={Puri, Karan},
   title={Generating the M\"obius group with involution conjugacy classes},
   journal={Proc. Amer. Math. Soc.},
   volume={140},
   date={2012},
   number={11},
   pages={4011--4016},
   issn={0002-9939},
   review={\MR{2944740}},
   doi={10.1090/S0002-9939-2012-11253-5},
}

\bib{fenchel}{book}{
   author={Fenchel, Werner},
   title={Elementary geometry in hyperbolic space},
   series={de Gruyter Studies in Mathematics},
   volume={11},
   note={With an editorial by Heinz Bauer},
   publisher={Walter de Gruyter \& Co.},
   place={Berlin},
   date={1989},
   pages={xii+225},
   isbn={3-11-011734-7},
   review={\MR{1004006 (91a:51009)}},
   doi={10.1515/9783110849455},
}

\bib{algorithm}{article}{
   author={Gilman, J.},
   author={Maskit, B.},
   title={An algorithm for $2$-generator Fuchsian groups},
   journal={Michigan Math. J.},
   volume={38},
   date={1991},
   number={1},
   pages={13--32},
   issn={0026-2285},
   review={\MR{1091506 (92f:30062)}},
   doi={10.1307/mmj/1029004258},
}
\bib{discon}{article}{
   author={Gilman, Jane},
   title={A discreteness condition for subgroups of ${\rm PSL}(2,{\bf C})$},
   conference={
      title={Lipa's legacy},
      address={New York},
      date={1995},
   },
   book={
      series={Contemp. Math.},
      volume={211},
      publisher={Amer. Math. Soc.},
      place={Providence, RI},
   },
   date={1997},
   pages={261--267},
   review={\MR{1476991 (98k:30060)}},
   doi={10.1090/conm/211/02824},
}

\bib{criteria}{article}{
   author={Gilman, Jane},
   author={Keen, Linda},
   title={Discreteness criteria and the hyperbolic geometry of palindromes},
   journal={Conform. Geom. Dyn.},
   volume={13},
   date={2009},
   pages={76--90},
   issn={1088-4173},
   review={\MR{2476657 (2010c:30060)}},
   doi={10.1090/S1088-4173-09-00191-X},
}

\bib{maskit}{book}{
   author={Maskit, Bernard},
   title={Kleinian groups},
   series={Grundlehren der Mathematischen Wissenschaften [Fundamental
   Principles of Mathematical Sciences]},
   volume={287},
   publisher={Springer-Verlag},
   place={Berlin},
   date={1988},
   pages={xiv+326},
   isbn={3-540-17746-9},
   review={\MR{959135 (90a:30132)}},
}

\bib{karan}{book}{
   author={Puri, Karan Mohan},
   title={Factorization of isometries of hyperbolic 4-space and a
   discreteness condition},
   note={Thesis (Ph.D.)--Rutgers The State University of New Jersey -
   Newark},
   publisher={ProQuest LLC, Ann Arbor, MI},
   date={2009},
   pages={61},
   isbn={978-1109-20400-1},
   review={\MR{2713266}},
}

\bib{ratcliffe}{book}{
   author={Ratcliffe, John G.},
   title={Foundations of hyperbolic manifolds},
   series={Graduate Texts in Mathematics},
   volume={149},
   edition={2},
   publisher={Springer},
   place={New York},
   date={2006},
   pages={xii+779},
   isbn={978-0387-33197-3},
   isbn={0-387-33197-2},
   review={\MR{2249478 (2007d:57029)}},
}
\bib{mythesis}{book}{
   author={Silverio, Andrew E.},
   title={Linking and discreteness in hyperbolic 4-space},
   note={Thesis (Ph.D.)--Rutgers The State University of New Jersey -
   Newark},
   date={2013},
   pages={ixi+155},
}
\end{biblist}
\end{bibdiv}

\end{document}